\DeclareMathAlphabet{\mathpzc}{OT1}{pzc}{m}{it}
\definecolor{red}{RGB}{255,0,0}
\definecolor{green}{RGB}{0,100,0}
\definecolor{blue}{RGB}{0,0,255}  
\crefname{equation}{equation}{equations}
\crefname{figure}{Figure}{Figures}
\theoremstyle{plain}
\newtheorem{thm}{Theorem}[section]
 \newtheorem{thmx}{Theorem}
\newtheorem{prop}[thm]{Proposition}
\newtheorem{lem}[thm]{Lemma}
\theoremstyle{definition}
\newtheorem{definition}[thm]{Definition} 
\newtheorem{example}[thm]{Example}
\theoremstyle{remark}
\newtheorem{remark}[thm]{Remark} 
\numberwithin{equation}{section} 
\numberwithin{equation}{section}
\renewcommand{\Re}{\mathop{\rm Re}}
\renewcommand{\Im}{\mathop{\rm Im}}
\newcommand{\K}{\mathbb K}
\renewcommand{\P}{\mathbb P}
\DeclareMathOperator{\diag}{diag}
\newcommand{\bbD}{\mathbb D}
\newcommand{\bbT}{\mathbb T}
\newcommand{\mcg}{{\mathcal G}}
\newcommand{\mcl}{{\mathcal L}}
\newcommand{\mcm}{{\mathcal M}}
\DeclareMathOperator{\rank}{\mbox{rank}}
\newcommand{\bbN}{\mathbb{N}}
\newcommand{\N}{\mathbb{N}}
\newcommand{\C}{\mathbb{C}}
\newcommand{\R}{\mathbb{R}}
\newcommand{\Z}{\mathbb{Z}}
\newcommand{\D}{\mathbb{D}}
\newcommand{\T}{\mathbb{T}}
\newcommand{\cmv}[1]{\bm{\mathcal{G}}^{(#1)}}
\newcommand\restr[2]{{
		\left.\kern-\nulldelimiterspace 
		#1 
		\vphantom{\big|} 
		\right|_{#2} 
}}
\renewcommand{\ell}{l}
\tikzstyle{vecArrow} = [thick, decoration={markings,mark=at position
\tikzstyle{innerWhite} = [semithick, white,line width=1.4pt, shorten >= 4.5pt]
\def\grd@save@target#1{%
  \def\grd@target{#1}}
\def\grd@save@start#1{%
  \def\grd@start{#1}}
\tikzset{
  grid with coordinates/.style={
    to path={%
      \pgfextra{%
        \edef\grd@@target{(\tikztotarget)}%
        \tikz@scan@one@point\grd@save@target\grd@@target\relax
        \edef\grd@@start{(\tikztostart)}%
        \tikz@scan@one@point\grd@save@start\grd@@start\relax
        \draw[minor help lines] (\tikztostart) grid (\tikztotarget);
        \draw[major help lines] (\tikztostart) grid (\tikztotarget);
        \grd@start
        \pgfmathsetmacro{\grd@xa}{\the\pgf@x/1cm}
        \pgfmathsetmacro{\grd@ya}{\the\pgf@y/1cm}
        \grd@target
        \pgfmathsetmacro{\grd@xb}{\the\pgf@x/1cm}
        \pgfmathsetmacro{\grd@yb}{\the\pgf@y/1cm}
        \pgfmathsetmacro{\grd@xc}{\grd@xa + \pgfkeysvalueof{/tikz/grid with coordinates/major step}}
        \pgfmathsetmacro{\grd@yc}{\grd@ya + \pgfkeysvalueof{/tikz/grid with coordinates/major step}}
        \foreach \x in {\grd@xa,\grd@xc,...,\grd@xb}
        \node[anchor=north] at (\x,\grd@ya) {\pgfmathprintnumber{\x}};
        \foreach \y in {\grd@ya,\grd@yc,...,\grd@yb}
        \node[anchor=east] at (\grd@xa,\y) {\pgfmathprintnumber{\y}};
      }
    }
  },
  minor help lines/.style={
    help lines,
    step=\pgfkeysvalueof{/tikz/grid with coordinates/minor step}
  },
  major help lines/.style={
    help lines,
    line width=\pgfkeysvalueof{/tikz/grid with coordinates/major line width},
    step=\pgfkeysvalueof{/tikz/grid with coordinates/major step}
  },
  grid with coordinates/.cd,
  minor step/.initial=.2,
  major step/.initial=1,
  major line width/.initial=0.25mm,
}
\tikzset{
  on each segment/.style={
    decorate,
    decoration={
      show path construction,
      moveto code={},
      lineto code={
        \path [#1]
        (\tikzinputsegmentfirst) -- (\tikzinputsegmentlast);
      },
      curveto code={
        \path [#1] (\tikzinputsegmentfirst)
        .. controls
        (\tikzinputsegmentsupporta) and (\tikzinputsegmentsupportb)
        ..
        (\tikzinputsegmentlast);
      },
      closepath code={
        \path [#1]
        (\tikzinputsegmentfirst) -- (\tikzinputsegmentlast);
      },
    },
  },
  mid arrow/.style={postaction={decorate,decoration={
        markings,
        mark=at position .5 with {\arrow[#1]{stealth}}
      }}},
  end arrow/.style={postaction={decorate,decoration={
        markings,
        mark=at position 1 with {\arrow[#1]{stealth}}
      }}},
  start arrow/.style={postaction={decorate,decoration={
        markings,
        mark=at position 0 with {\arrow[#1]{stealth}}
      }}},
}
\tikzset{every state/.style={minimum size=0pt}}
\title[Projective geometry, matrices and OPUC]{Poncelet--Darboux, Kippenhahn, and Szeg\H{o}: interactions between projective geometry, matrices and orthogonal polynomials}
\author[M.~Hunziker]{Markus Hunziker}
\address[MH]{Department of Mathematics, Baylor University, Waco TX, USA}
\email{Markus\_Hunziker@baylor.edu}
\author[A. Mart\'{\i}nez-Finkelshtein]{Andrei Mart\'{\i}nez-Finkelshtein}
\address[AMF]{Department of Mathematics, Baylor University, Waco TX, USA, and Department of Mathematics, University of Almer\'{\i}a, Almer\'{\i}a, Spain}
\email{A\_Martinez-Finkelshtein@baylor.edu}
\author[T.~Poe]{Taylor Poe}
\address[TP]{Department of Mathematics, Baylor University, Waco TX, USA}
\email{Taylor\_Thompson8@baylor.edu}
\author[B.~Simanek]{Brian Simanek}
\address[BS]{Department of Mathematics, Baylor University, Waco TX, USA}
\email{Brian\_Simanek@baylor.edu}
\date{\today}
\keywords{Poncelet porism, algebraic curves, foci, Blaschke product, numerical range, unitary dilation, OPUC  }
\subjclass[2010]{Primary:  14N15; Secondary: 14H50, 14H70, 30J10, 42C05, 47A12}
\begin{document}

\begin{abstract} 
 
We study algebraic curves that are envelopes of families of polygons supported on the unit circle $\bbT$.  We address, in particular, a characterization of such curves of minimal class and show that all realizations of these curves are essentially equivalent and can be described in terms of orthogonal polynomials on the unit circle (OPUC), also known as Szeg\H{o} polynomials.  Our results have connections to classical results from algebraic and projective geometry, such as theorems of Poncelet, Darboux, and Kippenhahn; numerical ranges of a class of matrices; and Blaschke products and disk functions.

This paper contains new results, some old results presented from a different perspective or with a different proof, and a formal foundation for our analysis.  We give a rigorous definition of the Poncelet property, of curves tangent to a family of polygons, and of polygons associated with Poncelet curves.  As a result, we are able to clarify some misconceptions that appear in the literature and present counterexamples to some existing assertions along with necessary modifications to their hypotheses to validate them.  For instance, we show that curves inscribed in some families of polygons supported on $\bbT$ are not necessarily convex, can have cusps, and can even intersect the unit circle.

Two ideas play a unifying role in this work.  The first is the utility of OPUC and the second is the advantage of working with tangent coordinates.  This latter idea has been previously exploited in the works of B. Mirman, whose contribution we have tried to put in perspective.
\end{abstract}

\maketitle


\section{Introduction} \label{sec:intro}

The main theme of this   paper is the study of real plane algebraic curves in the unit disk $\bbD:=\{z\in \C:\, |z|<1\}$ that can be inscribed in families of polygons with vertices on the unit circle 
$\bbT:=\partial \bbD=\{z\in \C:\, |z|=1\}$. In our discussion below we use terminology that is carefully explained in the paper, especially in Section~\ref{sec:background}.
The purpose of this introduction is to give a brief historical overview and a summary of  recent results and our contributions.
 
In 1746,  the English surveyor    W. Chapple discovered that  a circle $C$ in $\bbD$ with center at $a\in \bbD$ is inscribed in infinitely many triangles with vertices on $\bbT$ if and only if its radius is
\begin{equation}\label{eq:Chapple}
r=\frac{1-|a|^2}{2}\ .
\end{equation}
Even though Chapple's proof was flawed (see e.g. \cite{DelCentina:2016a}), it is clear that he understood that the existence 
of one triangle inscribed in $\mathbb T$ and circumscribed about $C$ implies  the existence of an infinite family of  such triangles. Chapple's formula  (\ref{eq:Chapple})
is then easily obtained by looking at an isosceles triangle in the family.

This brings us to 
\emph{Poncelet's closure theorem}, also known as \emph{Poncelet's porism} (see e.g.~\cite{DelCentina:2016a, DelCentina:2016b}), one of the most surprising and beautiful results of planar projective geometry. The theorem was discovered by J.-V. Poncelet in 1813 while he was a prisoner of war in Russia and published in 1822 in his \emph{Trait\'e sur les propri\'et\'es projectives des figures} \cite{Poncelet:1822}.
In slightly simplified form, adjusted to  our context, the result can be stated as follows:
\begin{thmx}[Poncelet, 1813] \label{thm:Poncelet}
Let   $C$ be an ellipse in $\mathbb D$, and suppose there is an $n$-sided polygon $\mathscr{P}_0$ inscribed in $\mathbb T$ and circumscribed about $C$. Then for any point $z\in \mathbb T$, there exists an $n$-sided polygon $\mathscr{P}(z)$ inscribed in $\mathbb{T}$ and circumscribed about $C$ 
such that  $z$ is a vertex of $\mathscr{P}(z)$.
\end{thmx}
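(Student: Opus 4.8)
The plan is to give the classical ``elliptic curve'' proof of Poncelet's closure theorem, in which the passage from one side of a circumscribing polygon to the next is realized as a translation on an elliptic curve naturally attached to the pair $(C,\mathbb{T})$. First I would complexify and projectivize: regard $C$ and $\mathbb{T}$ (the latter via $z\bar z=1$) as two smooth conics in $\mathbb{P}^{2}(\mathbb{C})$, each of which is $\cong\mathbb{P}^{1}(\mathbb{C})$, and form the incidence curve
\[
  E \;=\; \bigl\{\, (p,\ell)\ :\ p\in\mathbb{T},\ \ell \text{ a line tangent to } C,\ p\in\ell \,\bigr\}.
\]
The two projections $\pi_{1}\colon(p,\ell)\mapsto p$ and $\pi_{2}\colon(p,\ell)\mapsto\ell$ present $E$ as a double cover of $\mathbb{T}$ (the two tangents to $C$ through $p$) and of the dual conic $C^{*}$ (the two points in which $\ell$ meets $\mathbb{T}$), branched respectively over $C\cap\mathbb{T}$ and over the four common tangents of $C$ and $\mathbb{T}$. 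Since $C\subset\mathbb{D}$ is an ellipse disjoint from $\mathbb{T}$, in the main case these are genuine degree-four branch loci, and Riemann--Hurwitz gives $g(E)=1$; fixing a base point makes $E$ an elliptic curve.

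Next I would bring in the two natural involutions: the deck involution $\sigma$ of $\pi_{1}$ (replace $\ell$ by the other tangent from $p$) and the deck involution $\tau$ of $\pi_{2}$ (replace $p$ by the other intersection of $\ell$ with $\mathbb{T}$). Each has fixed points — the respective branch points — hence each is of the form $x\mapsto c-x$ in the group law of $E$, and therefore the composite $T:=\sigma\circ\tau$ is a \emph{translation} $x\mapsto x+t$ by a fixed $t\in E$. The key observation is that advancing the Poncelet construction by one side — from a vertex $p_{k}\in\mathbb{T}$ with incoming side $\ell_{k-1}$, pass to the other intersection $p_{k}\mapsto p_{k+1}$ along $\ell_{k-1}$, wait, more precisely: from the flag $(p_{k},\ell_{k-1})$ apply $\tau$ to reach $(p_{k+1},\ell_{k-1})$, then $\sigma$ to reach $(p_{k+1},\ell_{k})$ — is exactly the map $T$. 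Thus an $n$-gon inscribed in $\mathbb{T}$ and circumscribed about $C$ is the same as an orbit of length $n$ of $T$ on $E$, and it closes up if and only if $T^{n}=\mathrm{id}$, i.e. $nt=0$. This last condition involves only $t$, not the starting flag: if the given polygon $\mathscr{P}_{0}$ closes, then $nt=0$, and then \emph{every} orbit of length $n$ closes. Given $z\in\mathbb{T}$, it lies outside the ellipse $C$, so there are two real tangents from $z$ to $C$; starting the construction at $z$ along either of them yields, after $n$ steps, a closed polygon $\mathscr{P}(z)$ with $z$ as a vertex. One should also check that the construction stays within the reals: a line tangent to $C\subset\mathbb{D}$ cannot be tangent to $\mathbb{T}$, so ``the other intersection with $\mathbb{T}$'' of a real line is again a real point of $\mathbb{T}$ and ``the other tangent from a real point'' is a real line, whence reality is propagated and closure over $\mathbb{R}$ is inherited from the computation over $\mathbb{C}$.

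The conceptual core — the Poncelet step is a translation — is short; the parts that demand care are the degenerate configurations and the real-structure bookkeeping just sketched. The genuinely delicate point, which I expect to be the main obstacle, is the case in which $C$ and the complexified $\mathbb{T}$ fail to meet in four distinct points — most notably when $C$ is a circle, when they are tangent at the circular points at infinity and $E$ is rational rather than elliptic. There the same scheme goes through with the elliptic curve replaced by a rational curve carrying an analogous group structure (so that $T$ becomes a ``multiplication'' and closure becomes an $n$-th root of unity condition, again independent of the starting point), and in any event the circle case is classical (Chapple, Euler, Fuss); alternatively one can reach these cases by continuity from nearby generic pairs, since closure is a closed algebraic condition on $t$.

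Finally, I would note that within the framework of this paper there is a more hands-on route: realize the one-parameter family of circumscribing $n$-gons via the $n$ preimages $B^{-1}(\lambda)$ of a finite Blaschke product $B$ of degree $n$ as $\lambda$ runs over $\mathbb{T}$, so that closure is automatic because every $\lambda\in\mathbb{T}$ has exactly $n$ preimages under $B$. Since constructing that Blaschke product from $\mathscr{P}_{0}$ is essentially the content of the later sections, I would develop this proof there rather than here, and treat Theorem~\ref{thm:Poncelet} in this introduction only as the motivating classical statement.
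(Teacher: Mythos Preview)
The paper does not prove Theorem~\ref{thm:Poncelet} at all: it is stated in the Introduction as the classical result of Poncelet, with historical attributions (Jacobi, Trudi, Cayley, Griffiths--Harris) but no argument. Your own closing remark is exactly right --- the theorem functions here only as the motivating classical statement, and the authors rely on the literature for its proof.

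Your proposed proof is the standard modern argument (the one in Griffiths--Harris \cite{Griffiths:1978kw}, which the paper cites): build the incidence curve $E$, observe that the two natural involutions are each of the form $x\mapsto c-x$ on the elliptic curve, so their composite is a translation, and conclude that closure after $n$ steps is a torsion condition independent of the starting flag. This is correct, and your handling of the degenerate (confocal/circle) case and of the real structure is appropriate. The alternative route you sketch at the end --- realizing the circumscribing $n$-gons as the level sets $B^{-1}(\lambda)$ of a degree-$n$ Blaschke product --- is indeed the viewpoint the paper develops in Sections~\ref{sec:poncelet_general} and~\ref{sec:Mirman} (via paraorthogonal polynomials, CMV matrices, and Mirman's parametrization), but the paper uses that machinery to characterize complete Poncelet curves rather than to re-derive the classical closure theorem for a single ellipse. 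So your elliptic-curve proof is a genuine addition rather than a duplication; if you want to match the paper's style, simply cite \cite{Griffiths:1978kw} or \cite{DelCentina:2016a} and move on.
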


\begin{figure}[htb]
	\begin{center}
	\includegraphics[width=0.35\linewidth]{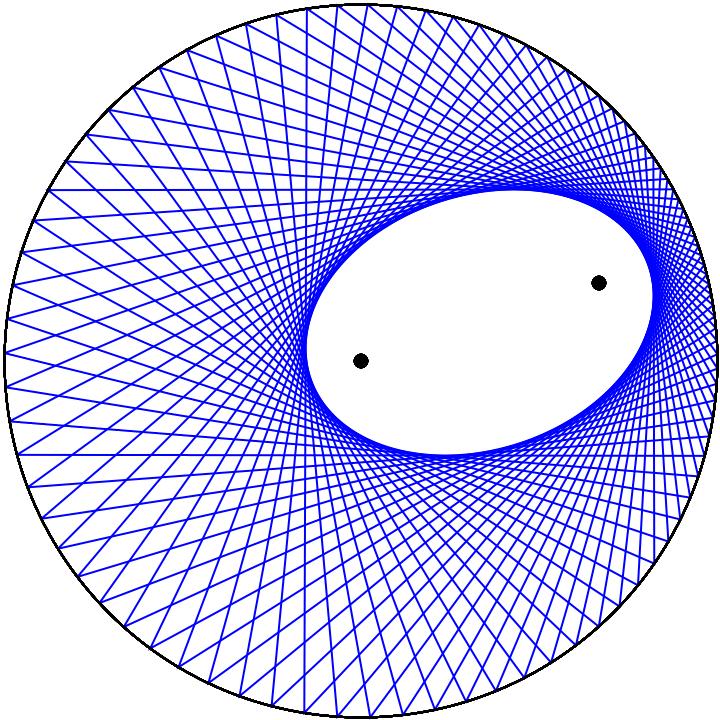}
	\end{center}
	\caption{An ellipse with a Poncelet property.}
	\label{fig:PonceletEllpse}
\end{figure} 

Here by an $n$-sided polygon we mean a simple closed polygonal chain of length $n$. However,  Poncelet's closure theorem remains true if we replace $n$-sided polygons inscribed in $\mathbb T$ by (possibly self-intersecting) closed polygonal chains of length $n$ with vertices on $\mathbb T$.
In either case, we say that an ellipse as in the theorem has the $n$-\emph{Poncelet} property.

After Poncelet published his \emph{Trait\'e}, following a suggestion of J.~Steiner, C.~G.~J.~Jacobi used the theory of elliptic integrals to give a proof of Poncelet's closure theorem in the special  case when  $C$ is a circle. Inspired by Jacobi's work,  N.~Trudi and later A.~ Cayley used elliptic integrals to prove  Poncelet's  theorem in the general case (see e.g. \cite[Chs. 3--5]{DelCentina:2016a}). Furthermore, Cayley was able to find an explicit  criterion for ellipses to have the $n$-Poncelet property in terms of power series expansions of the square-root of certain determinants. Cayley's criterion makes it possible to determine, for example, 
for which semi-minor  axes an ellipse with prescribed foci has the $n$-Poncelet property. A modern interpretation of Cayley's criterion was given by Griffith and Harris \cite{Griffiths:1978kw}.

Are there any algebraic curves $C$  in $\mathbb D$ of higher degree that have the $n$-Poncelet property?
In the late 1860's, G.~ Darboux started to investigate this question, and he realized that it is best approached by considering
the dual problem of finding curves passing through the intersection points of $n$ tangent lines to $\mathbb T$. By introducing a
convenient system of plane coordinates, now called Darboux coordinates, he was able to give a new proof of Poncelet's closure theorem and generalizations (see e.g. \cite[Ch.~9]{DelCentina:2016a}.)
Adjusted to  our context, Darboux's results can be summarized as follows:

\begin{thmx}[Darboux \cite{Darboux:1917}, 1917] \label{thm:Darboux}
Let  $C$  be  a closed  convex curve in $\mathbb{D}$  and suppose that there is an $n$-sided polygon $\mathscr{P}_0$    inscribed in $\mathbb{T}$
and  circumscribed about $C$. 
If the curve $C$ is a connected component of a real algebraic curve $\Gamma$ in  $\mathbb{D}$ of class\footnote{The class of a plane algebraic curve is the degree of its dual curve, see Section~\ref{sec:geometry} for details.}
 $n-1$ such that  each diagonal of  $\mathscr{P}_0$  is tangent to $\Gamma$,
then for every  point $z \in \mathbb{T}$,  there exists an $n$-sided polygon $\mathscr{P}(z)$ inscribed in $\mathbb{T}$ and circumscribed about $C$ 
such that  $z$ is a vertex of $\mathscr{P}(z)$ and each diagonal of $\mathscr{P}(z)$ is tangent to $\Gamma$.
In the special case when $C$ is an ellipse,  there always exists such an algebraic curve $\Gamma$   and this curve decomposes  into $(n-1)/2$ ellipses if $n$ is odd, and  $(n-2)/2$ ellipses and an isolated point if $n$ is even. 
\end{thmx}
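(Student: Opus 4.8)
\emph{Plan.} The plan is to pass to the dual plane, where the statement ``a chord of $\mathbb{T}$ is tangent to $\Gamma$'' becomes an incidence with the dual curve $\widehat\Gamma$; to reconstruct a candidate polygon from a single vertex; to propagate the configuration around $\mathbb{T}$ starting from $\mathscr P_0$ by a continuity argument; and to treat the ellipse case separately using Theorem~\ref{thm:Poncelet} together with the classical elliptic uniformization of the Poncelet correspondence. For a point, line, or curve $X$ write $\widehat X$ for its dual. The dual $\widehat{\mathbb T}$ of the circle is a smooth conic; a point $z\in\mathbb T$ becomes a line $\widehat z$ tangent to $\widehat{\mathbb T}$; a chord $z_iz_j$ becomes the point $\widehat z_i\cap\widehat z_j$; and ``$z_iz_j$ tangent to $\Gamma$'' becomes ``$\widehat z_i\cap\widehat z_j\in\widehat\Gamma$'', where $\widehat\Gamma$ has degree $n-1$ because $\Gamma$ has class $n-1$. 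Through any point of the plane pass exactly two tangent lines to $\widehat{\mathbb T}$. Hence a polygon with vertices on $\mathbb T$ all of whose chords are tangent to $\Gamma$ corresponds to $n$ tangent lines $\widehat z_1,\dots,\widehat z_n$ of $\widehat{\mathbb T}$ whose $\binom n2$ pairwise intersections all lie on $\widehat\Gamma$, and since a line meets $\widehat\Gamma$ in exactly $n-1$ points, the points $\widehat z_i\cap\widehat z_j$ with $j\ne i$ exhaust $\widehat z_i\cap\widehat\Gamma$. So the whole configuration is determined by one vertex $z_1$: in the original plane, draw the $n-1$ tangent lines from $z_1$ to $\Gamma$, and let $z_2,\dots,z_n$ be their second intersections with $\mathbb T$.

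\emph{Propagation.} For $z\in\mathbb T$ let $N(z)$ be the set of second endpoints on $\mathbb T$ of the tangent lines to $\Gamma$ through $z$; the relation ``$w\in N(z)$'', i.e. ``$zw$ tangent to $\Gamma$'', is symmetric. Call $z$ \emph{admissible} if $F(z):=\{z\}\cup N(z)$ satisfies $F(w)=F(z)$ for every $w\in N(z)$, i.e. if $F(z)$ is the vertex set of a (possibly self-intersecting) $n$-gon all of whose chords are tangent to $\Gamma$; by hypothesis the vertices of $\mathscr P_0$ are admissible. One then argues that, away from the finite set $S\subset\mathbb T$ where two of the tangent lines from $z$ coincide, meet a singular point of $\Gamma$, or touch $\mathbb T$, admissibility is an open and closed condition, so it holds on each component of $\mathbb T\setminus S$ met by $\mathscr P_0$; convexity of $C$ guarantees that two of the chords at $z$ are the real tangents from $z$ to $C$, so that $F(z)$ is a genuine polygon, and a limiting argument carries admissibility across $S$ and around all of $\mathbb T$, yielding $\mathscr P(z)$ for every $z\in\mathbb T$. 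I expect this propagation to be the main obstacle: the system ``all $\binom n2$ chords tangent to $\Gamma$'' is badly overdetermined for $n\ge4$, so one must exploit the rigidity forced by the hypothesis --- concretely, that the symmetric correspondence $\widehat R=\{(s,t):\widehat z(s)\cap\widehat z(t)\in\widehat\Gamma\}$ breaks into branches on which the ``Poncelet step'' is translation-like --- to see that fixing one tangent line and running the second tangents through its $n-1$ intersections with $\widehat\Gamma$ really reproduces a closed configuration; resolving the monodromy of $\widehat R$ over $\mathbb T$ and controlling $S$ is where the work lies.

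\emph{The ellipse case.} When $C$ is an ellipse, Theorem~\ref{thm:Poncelet} furnishes a one-parameter family of $n$-gons inscribed in $\mathbb T$ and circumscribed about $C$, with vertices $z_1(z),\dots,z_n(z)$ varying continuously as $z$ traverses $\mathbb T$. On the elliptic curve $\mathcal E$ uniformizing the Poncelet correspondence attached to the pencil of conics spanned by $\mathbb T$ and $C$ (Jacobi--Cayley; see Griffiths--Harris), these vertices lift to an arithmetic progression with common difference an $n$-torsion element $\tau$, and for each $k$ the family of $k$-skip chords $z_i(z)z_{i+k}(z)$ corresponds to translation by $k\tau$, hence envelopes a conic $C_k$ in the pencil, with $C_1=C$. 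For $n$ odd, taking $k=1,\dots,(n-1)/2$ gives $(n-1)/2$ ellipses inside $\mathbb D$ of total class $n-1$; for $n$ even the same works for $k=1,\dots,n/2-1$, while the $(n/2)$-skip chords are the main diagonals, which all pass through a common point (a Brianchon-type concurrence), so $\Gamma$ picks up that point as an isolated point and the class count reads $2\cdot\frac{n-2}{2}+1=n-1$. In either case $\Gamma:=\bigcup_k C_k$, together with the isolated point when $n$ is even, is a real algebraic curve of class $n-1$ having $C$ as a component and with every diagonal of every $\mathscr P(z)$ tangent to it, so the hypothesis of the general statement is met and the conclusion follows.
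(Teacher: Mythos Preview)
Your propagation argument for the general statement is explicitly incomplete: you write that ``resolving the monodromy of $\widehat R$ over $\mathbb T$ and controlling $S$ is where the work lies,'' and you do not carry out either step. The open-and-closed argument requires that near an admissible $z$ the $n-1$ branches of $N(\cdot)$ remain on $\mathbb T$, stay distinct, and continue to satisfy the \emph{mutual} tangency constraints among themselves---not just with $z$---and the limiting passage across $S$ is only asserted. The paper states Theorem~\ref{thm:Darboux} as a historical result rather than proving it ab initio, but the machinery it develops gives an algebraic substitute that bypasses propagation altogether: in Mirman's tangent coordinates (Section~\ref{sec:MirmansParam}) tangency of the chord $[z,w]$ to $\Gamma$ becomes $P(z,w)=0$ for a symmetric polynomial of degree $n-1$ in each variable, and Theorem~\ref{thm:Mirman} forces $P$ into the Bezoutian form \eqref{eq:representation}. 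On $\mathbb T$ that form makes $P(z,w)=0$ equivalent to $B_n(z)=B_n(w)$ for the Blaschke product \eqref{Blasch}, so the solution set through any $z\in\mathbb T$ is automatically a closed $n$-tuple---the closure is an identity, not a limit.

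For the ellipse case your Jacobi--Cayley elliptic-curve argument is the classical route and is sound in outline (the concurrence of main diagonals for even $n$ is indeed forced, since $(n/2)\tau$ is $2$-torsion), though you should still verify that the $C_k$ are ellipses lying in $\mathbb D$ rather than other conics of the pencil. The paper's treatment (Section~\ref{sec:Mirman}, Theorem~\ref{thm:MirmanEllipses}) is different and purely algebraic: it writes the tangent-coordinate equation of a single ellipse as the explicit quadratic $q(z,w;f_1,f_2,s)$ in \eqref{MirmanEllipse}, shows via B\'ezout that $q$ divides the Bezoutian $P$, and iterates to obtain the factorization \eqref{factorizat} into $[n/2]$ such quadratics, with one linear factor when $n$ is even. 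Your approach is more geometric and self-contained; the paper's ties the decomposition directly to the OPUC and numerical-range characterizations of Theorem~\ref{thm:matrices}.
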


If $C$ and $\Gamma$ are as in the theorem, then $\Gamma$ can be recovered from $C$ as the envelope of all the diagonals of 
the family of $n$-sided polygons  inscribed in $\mathbb{T}$ and  circumscribed about $C$
(see Figure~\ref{fig:Packet}). This will be made precise in Section~\ref{sec:evolvents} in terms of Darboux's  curve of degree $n-1$ that 
is dual to $\Gamma$.  We note that  even though the curve $\Gamma$ is
singular in general, it has exactly $n-1$  tangent lines in each arbitrarily given direction (just as in the special case when $\Gamma$ decomposes into ellipses).

\begin{figure}[htb] 
	\begin{center}
		\begin{tabular}{c p{3mm}c}		
			\includegraphics[width=0.35\linewidth]{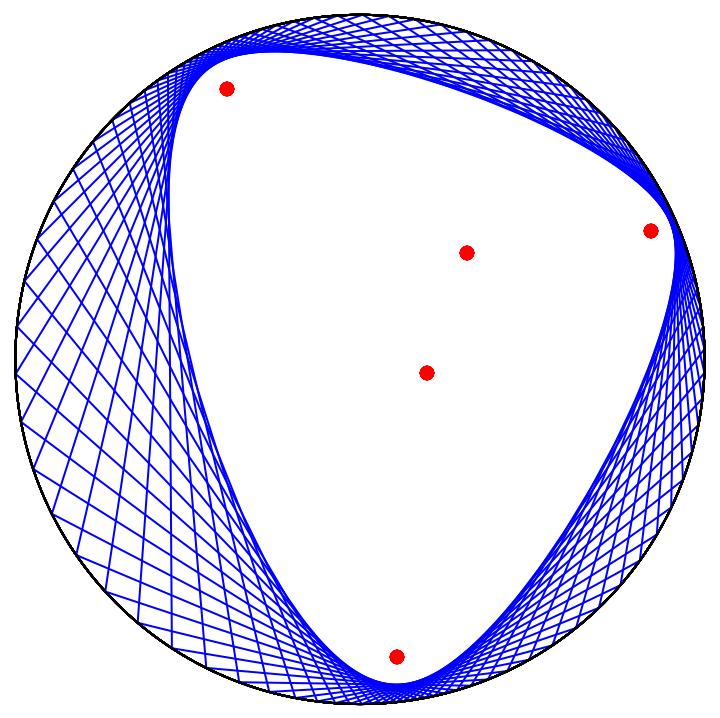} & 	& \includegraphics[width=0.35\linewidth]{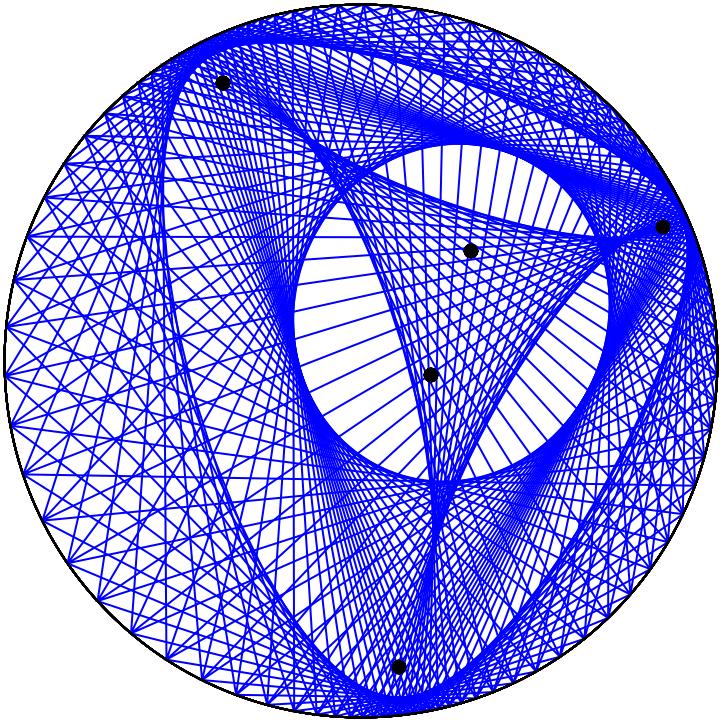} 
		\end{tabular}
	\end{center}
	\caption{Envelopes of convex hulls (left) and of all closed polygons with vertices at a family of points on the circle.}
	\label{fig:Packet}
\end{figure}

In the  study of conic sections, the idea of  foci plays a central role. In 1832, J.~Pl\"ucker   defined foci of higher degree algebraic curves  (see the definition in Section~\ref{sec:geometry}). It turns out that a curve $\Gamma$ as in the theorem above has exactly $n-1$ real foci (counted with multiplicity) and $(n-1)(n-2)$ nonreal foci. Furthermore, all the real foci are in $\mathbb D$. We will later see that the $n-1$ real foci determine $\Gamma$ completely. A priori, this fact is by no means obvious and was not 
discovered until twenty years ago.

One way to find an algebraic curve with given foci is using the notion of the numerical range of a matrix (see the definition in Section~\ref{sec:numrange}). 
\begin{thmx}[Kippenhahn \cite{MR59242, MR2378310}] \label{thm:Kippenhahn}
For an $n\times n$ complex matrix $\bm A$ there exists a real  algebraic curve $\Gamma$ of class $n$ whose real foci are the eigenvalues of $\bm A$ and such that the numerical range of $ \bm A$ is the convex hull of the real points of $\Gamma$.
\end{thmx}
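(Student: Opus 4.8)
The plan is to realize $\Gamma$ concretely as the \emph{dual} of the curve traced by a natural family of tangent lines attached to $\bm A$, and then identify its foci and its convex hull. Write $\bm A = \bm H + i\bm K$ with $\bm H = \tfrac12(\bm A + \bm A^*)$ and $\bm K = \tfrac1{2i}(\bm A - \bm A^*)$ Hermitian. For a direction $\theta\in[0,2\pi)$ consider the Hermitian matrix $\bm H_\theta \eqq \cos\theta\, \bm H + \sin\theta\, \bm K$, and let $\lambda_{\max}(\theta)$ be its largest eigenvalue; the supporting line of the numerical range $W(\bm A)$ with outer normal $e^{i\theta}$ is $\{z : \operatorname{Re}(e^{-i\theta}z) = \lambda_{\max}(\theta)\}$. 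More symmetrically, to \emph{every} eigenvalue of $\bm H_\theta$ one associates such a line, and as $\theta$ varies these lines sweep out a curve whose dual is the desired $\Gamma$. The cleanest way to package this is projectively: homogenizing, the line corresponding to the data $(\theta,\mu)$ with $\mu$ an eigenvalue of $\bm H_\theta$ has coefficients built from $(\cos\theta,\sin\theta,-\mu)$, and eliminating the parameters shows that these line-coordinates $(u_0:u_1:u_2)$ satisfy the single homogeneous equation
\[
 \det\bigl(u_0 \bm I + u_1 \bm H + u_2 \bm K\bigr) = 0 .
\]
This is a curve of degree $n$ in the dual plane; its dual $\Gamma$ in the original plane is therefore a real algebraic curve of class $n$, and by construction $W(\bm A)$ is exactly the intersection of the half-planes $\operatorname{Re}(e^{-i\theta}z)\le \lambda_{\max}(\theta)$, i.e.\ the convex hull of the real points of $\Gamma$.

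Next I would identify the real foci. By Pl\"ucker's definition (recalled in Section~\ref{sec:geometry}), a focus of $\Gamma$ is a point through which pass two tangent lines to $\Gamma$ from the circular points at infinity $I=(1:i:0)$ and $J=(1:-i:0)$; equivalently, in line coordinates, the foci of $\Gamma$ are the images of the intersection points of the dual curve with the two pencils of lines through $I$ and through $J$. Substituting $u = (1:\pm i : -z)$—the line coordinates of the two isotropic lines through the affine point $z$—into $\det(u_0\bm I + u_1\bm H + u_2\bm K)=0$ gives
\[
 \det\bigl(\bm I \pm i\,\bm H - z\,\bm K\bigr)=0 \quad\text{and its conjugate},
\]
and one checks that $z$ is a real focus precisely when $z$ is a common root, which (writing $\bm H\pm i\bm K$ in terms of $\bm A,\bm A^*$) reduces to $\det(z\bm I - \bm A)=0$ after the appropriate normalization. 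Hence the $n$ real foci, counted with multiplicity, are exactly the eigenvalues of $\bm A$. Some care is needed here with the cases where $\bm K$ is singular or where eigenvalues of $\bm A$ coincide; in those cases one works projectively and counts intersection multiplicities, or perturbs $\bm A$ and uses continuity of both $W(\bm A)$ and the foci.

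The main obstacle, I expect, is not the formal computation but making the curve-theoretic bookkeeping rigorous: one must verify that the homogeneous polynomial $p_{\bm A}(u_0,u_1,u_2)=\det(u_0\bm I + u_1\bm H + u_2\bm K)$ genuinely defines a curve of class $n$ (its dual has the right class, it is not identically a lower-degree curve times a repeated factor, etc.), and that the real locus of the dual curve $\Gamma$ has convex hull equal to $W(\bm A)$ rather than something smaller—this uses that for each $\theta$ the \emph{largest} eigenvalue $\lambda_{\max}(\theta)$ actually corresponds to a supporting line that touches $W(\bm A)$, which is a standard fact about numerical ranges (Toeplitz–Hausdorff) together with a continuity/compactness argument. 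A secondary subtlety is the correct accounting of foci "with multiplicity," which forces the projective viewpoint from the outset. Once these points are handled, the three assertions—class $n$, real foci $=$ spectrum of $\bm A$, and $W(\bm A)=\operatorname{conv}(\Gamma\cap\R^2)$—all fall out of the single identity $p_{\bm A}(u_0,u_1,u_2)=\det(u_0\bm I+u_1\bm H+u_2\bm K)$.
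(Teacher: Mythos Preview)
Your approach is essentially the same as the paper's sketch in Section~\ref{sec:numran}: define the dual curve $\Gamma^*$ by the vanishing of the Hermitian determinant $G_{\bm A}(u_1,u_2,u_3)=\det(u_1\Re\bm A+u_2\Im\bm A-u_3\bm I)$, read off $W(\bm A)$ as the convex hull of $\Gamma(\R)$ via supporting lines, and compute the real foci by substituting the isotropic line coordinates $(1,i,z)$, which collapses the determinant to $\det(\bm A-z\bm I)$.

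One bookkeeping slip to fix: with your stated line coordinates $(u_0:u_1:u_2)=(\cos\theta:\sin\theta:-\mu)$, the correct homogeneous equation is $\det(u_0\bm H+u_1\bm K+u_2\bm I)=0$, not $\det(u_0\bm I+u_1\bm H+u_2\bm K)=0$; as written, your substitution for the foci yields $\det(\bm I\pm i\bm H-z\bm K)$, which is not the characteristic polynomial of $\bm A$. With the variables placed correctly the foci substitution $(u_0,u_1,u_2)=(1,i,-z)$ gives $\det(\bm H+i\bm K-z\bm I)=\det(\bm A-z\bm I)$, exactly as in the paper.
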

Kippenhahn's proof is constructive and elegant, and we will explain  his arguments briefly in Section~\ref{sec:numrange}.

Starting in 1998, H.-L. Gau and P. Y. Wu  and, independently, B.~Mirman  
studied when the boundary of the  numerical range of an $n\times n$ matrix $\bm A$ exhibits the $(n+1)$-Poncelet property with respect to the unit circle 
$\mathbb T$.
In a series of  papers, Gau and Wu \cite{Gau:2003dw, Gau:1998bu, Gau:1998fk, Wu:2000bn} showed that a necessary and sufficient condition is that $\bm A$ belongs to the class $\mathcal{S}_{n}$ of completely non-unitary contractions with defect index 1. This class can also be identified with the compressed multiplication operators on $\bbT$.  The corresponding Poncelet polygons will join the eigenvalues of all rank 1 unitary extensions of $\bm A\in \mathcal{S}_{n}$.
Mirman, using slightly different terminology and more geometric techniques,  gave beautiful new proofs of many of  Darboux's results, apparently without being aware of Darboux's work until 2005 (see the comments in the introduction to \cite{Mirman:2005dj}).

A seemingly alternative approach to the problem above, using rational functions, can be traced back to the following result of J.~Siebeck, popularized in Marden's 1948 book \cite[Ch.~1, \S 4]{Marden66}, \emph{Geometry of Polynomials}:
\begin{thmx}[Siebeck   \cite{Sieb}]   Let $\{w_1, \ldots,w_{n}\}$ be the vertices of a convex polygon in $\C$ ordered counter-clockwise.   Let
	\begin{equation}\label{1.1}
		M(z)=\sum_{j=1}^{n} \frac{m_j}{z-w_j},
	\end{equation}
	where $m_1,\ldots,m_n$ are   real numbers.
	Then the zeros of $M(z)$ are the foci of a real algebraic curve of class $n-1$ which intersects each of the line segments $[w_j,w_k]$, $j\not=k$, at the point dividing the line in ratio $m_j/m_k$.
\end{thmx}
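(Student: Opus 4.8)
The plan is to realize the sought curve $\Gamma$ through its dual and to translate the statement about the zeros of $M$ into one about Pl\"ucker foci (as recalled in Section~\ref{sec:geometry}) by means of isotropic lines. Complexify the plane and use the affine coordinate $z$ together with its conjugate $\bar z$; the two pencils of isotropic lines, i.e.\ the lines through the two circular points at infinity, are then $\{z=\zeta\}$ and $\{\bar z=\zeta\}$, and a real point $p$ is a \emph{real} focus of a real curve precisely when the isotropic line $\{z=p\}$ is tangent to that curve (its conjugate $\{\bar z=\bar p\}$ is then automatically tangent as well). In the dual projective plane a finite line $\gamma+\alpha z+\beta\bar z=0$ has homogeneous coordinates $[\alpha:\beta:\gamma]$; the pencil of lines through a vertex $w_j$ is the line $\{L_j=0\}$ with $L_j:=\gamma+w_j\alpha+\overline{w_j}\,\beta$, the side or diagonal through $w_j$ and $w_k$ is the point $\{L_j=0\}\cap\{L_k=0\}$, and each $L_j$ is invariant under the antiholomorphic involution $[\alpha:\beta:\gamma]\mapsto[\bar\beta:\bar\alpha:\bar\gamma]$ cutting out the real lines.

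The construction then rests on the elementary observation that for \emph{any} scalars $c_1,\dots,c_n$ the curve in the dual plane
\[
 \widehat{\Gamma}:\qquad \sum_{j=1}^{n} c_j\prod_{k\neq j} L_k=0
\]
has degree $n-1$ and passes through each point $\{L_j=0\}\cap\{L_k=0\}$, since in every term of the sum at least one of the factors $L_j,L_k$ occurs. Choosing $c_j=m_j$ makes $\widehat{\Gamma}$ a \emph{real} curve of class $n-1$ — the $m_j$ are real and the $L_j$ are invariant — and its dual $\Gamma=\widehat{\Gamma}^{\vee}$ is then a real algebraic curve of class $n-1$ tangent to all $\binom{n}{2}$ sides and diagonals of the polygon. (Here one needs genericity to know that the leading form of $\widehat{\Gamma}$ does not vanish and that $\widehat{\Gamma}$ is reduced, so that $\Gamma$ is an honest curve of the stated class; degenerate configurations of the $w_j$ and $m_j$ have to be checked separately. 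An alternative, less direct route to the existence of a class $n-1$ curve with these real foci would be through Theorem~\ref{thm:Kippenhahn}, but it would not immediately yield the contact points.)

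To pin down the foci, restrict $\widehat{\Gamma}$ to the line in the dual plane that parametrizes the isotropic pencil $\{z=\zeta\}$. On that line $L_j$ specializes to $w_j-\zeta$, so the restricted equation becomes, up to a global sign, $\sum_{j} m_j\prod_{k\neq j}(\zeta-w_k)=0$, i.e.\ $N(\zeta)=0$, where $N$ is the numerator in $M(z)=N(z)/\prod_{j}(z-w_j)$. Hence the $n-1$ isotropic tangent lines of $\Gamma$ in that pencil are $\{z=f_i\}$, where $f_1,\dots,f_{n-1}$ are the zeros of $M$; applying the involution, the tangents in the other isotropic pencil are $\{\bar z=\overline{f_i}\}$, and pairing them shows that the real foci of $\Gamma$ are precisely the zeros of $M$ and nothing else.

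For the points of contact, at $\{L_j=0\}\cap\{L_k=0\}$ one has, modulo terms vanishing to second order there,
\[
 \sum_{\ell} m_\ell\prod_{i\neq \ell} L_i\ \equiv\ \Big(\prod_{i\neq j,k} L_i\Big)\,\bigl(m_j L_k+m_k L_j\bigr),
\]
so the tangent line of $\widehat{\Gamma}$ at that point is $\{m_j L_k+m_k L_j=0\}$; reading off its coordinates, the corresponding point of $\Gamma$ is $\dfrac{m_k w_j+m_j w_k}{m_j+m_k}$, which lies on the line through $w_j$ and $w_k$ and divides the segment $[w_j,w_k]$ in the ratio $m_j:m_k$. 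The main obstacle is not any single computation but installing the dictionary of the first paragraph correctly — matching ``zero of $M$'' with ``Pl\"ucker focus'' through isotropic tangents — together with the non-degeneracy bookkeeping that guarantees $\widehat{\Gamma}$ really is the dual of a class $n-1$ curve; one must also observe that when the $m_j$ have mixed signs the point above need not lie on the segment $[w_j,w_k]$ itself, and reconcile the precise normalization of the dividing ratio with that of \cite{Sieb,Marden66}.
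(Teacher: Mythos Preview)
The paper does not supply its own proof of Siebeck's theorem: it is quoted in the Introduction as a classical result (with attribution to \cite{Sieb} and \cite{Marden66}) alongside the other historical Theorems~\ref{thm:Poncelet}--\ref{thm:Kippenhahn}, and is used only as motivation for the Blaschke-product point of view that follows. There is therefore no in-paper argument to compare against.

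That said, your argument is essentially the classical one and is sound. Working in the dual plane with the linear forms $L_j=\gamma+w_j\alpha+\overline{w_j}\,\beta$ and defining $\widehat{\Gamma}$ by $\sum_j m_j\prod_{k\neq j}L_k=0$ is exactly how Siebeck's curve is produced; the restriction to the isotropic pencil correctly recovers the numerator of $M$, and your tangent computation at $\{L_j=0\}\cap\{L_k=0\}$ is the standard way to read off the contact point $(m_kw_j+m_jw_k)/(m_j+m_k)$. The caveats you flag are the right ones: one needs $\prod_{i\neq j,k}L_i\neq 0$ at the node (no three $w_i$ collinear) and $m_j+m_k\neq 0$ for the contact point to be finite, and when the $m_j$ have mixed signs the ``division in ratio $m_j/m_k$'' must be read projectively rather than as a point on the closed segment. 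These are genuine hypotheses implicit in the classical statement, not defects of your proof.
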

Daepp, Gorkin and collaborators \cite{Daepp:2002km, MR3932079, Daepp:2010fz, Gorkin:2017bv} published a series of papers concerning finite Blaschke products $b(z)$. These are the Schur functions (analytic maps of $\bbD$ to itself) which are analytic in a neighborhood of $\overline{\bbD}$, of magnitude $1$ on $\partial\bbD$, with $n$ zeros in $\bbD$.  A connection with Siebeck's theorem is through the formula (see \cite{MR3932079})
$$
    \sum_{j=1}^{n } \frac{m_j(\lambda)}{z-w_j} =\frac{b(z)}{zb(z)-\bar{\lambda}},
$$
which shows that solutions of equations of the form
\begin{equation}\label{eqBl}
zb(z)=\bar{\lambda} \in \bbT
\end{equation}
generate configurations of points on $\bbT$ such that the envelope of the convex hull of these points is a (component) of an algebraic curve whose foci coincide with the zeros of $b$. Furthermore, Fujimura \cite{Fujimura:2013ct, Fujimura:2017jgb, Fujimura:2018dt} extended their analysis to the whole interior curve and its dual, see details in Section~\ref{sec:poncelet_general}. 

It turns out that both approaches (via the numerical range or using Blaschke products) are  equivalent, and in this sense, provide the only possible construction of algebraic curves in $\bbD$ with prescribed foci having a Poncelet property. In a recent work \cite{MR3945586}, some of the authors of this paper showed that both points of view are naturally connected via orthogonal polynomials on the unit circle (OPUC), initially studied in a systematic way by Szeg\H{o} \cite{szego:1975} and Geronimus \cite{MR0133642}; for a modern account on the theory, see the treatise by Simon \cite{Simon05b}. In particular, it was shown that the class $\mathcal{S}_{n}$ is exactly the class of the truncated CMV matrices that are the natural family of matrices to be studied in the theory of OPUC and that equations \eqref{eqBl} define the zeros of the so-called \emph{paraorthogonal} polynomials on the unit circle.  

Poncelet's construction can be approached also from a point of view of the theory of elliptic billiards \cite{Dragovic:2014ve, Dragovic:2019iy, MR3519751, MR2994045} and discrete dynamical systems such as the pentagram map \cite{Schwartz:1992, Schwartz:2015cn, Ovsienko:2010kj, MR4030385}. 

The literature on Poncelet is vast (so the bibliography cited here is far from complete), with a clear increase in attention to the subject in the last two decades. However, trying to navigate through these alternative and complementary perspectives can be also frustrating due to diversity of terminology and, to be said, occasional lack of rigor. Notions like a curve inscribed in a family of lines or an envelope of polygons, or even a Poncelet curve, are many times left to intuition and can lead (and did lead) to wrong conclusions. It also became clear to us that the phenomenon discovered by Poncelet is a ``shell'' (convex hull) of a much richer structure analyzed by Darboux (and incidentally, rediscovered by Mirman) and should be studied from that perspective. Thus, our initial motivation was to unify several results scattered in the literature, in part using the tool of OPUC, as well as to define precisely the objects we are working with. This paper is the first result of this endeavor; it contains several results, some old (either stated without proof or proved from a different perspective), some new. In particular,
\begin{itemize}
	\item We give a rigorous definition of the Poncelet property, of curves tangent to a family of polygons (or envelopes of families of polygons),  and polygons associated with Poncelet curves.
	\item We illustrate some misconceptions that appear in literature and present counterexamples. We also analyze sufficient conditions that invalidate these situations. For instance, we show that curves inscribed in some families of polygons supported on $\bbT$ are not necessarily convex, can have cusps and even can intersect the unit circle. 
	\item We prove a characterization of all ``complete Poncelet curves'' of minimal class and show that all realizations of these curves, mentioned in this Introduction, are equivalent.
	\item Regarding the tools, OPUC, brought up in \cite{MR3945586}, play a unifying role in our analysis. We also put forward the advantage of working with tangent coordinates. They have been exploited in the works of B.~Mirman (whose contribution, in our opinion, is not sufficiently known and we tried to put in perspective), so that we call its application in our context the ``Mirman's parametrization''. Another very convenient tool from  projective geometry is reciprocation with respect to the unit circle, which turned out to be very useful, see e.g.~\cite{Fujimura:2018dt}.
\end{itemize} 
 
The structure of the paper is as follows. In order to make this paper self-contained and facilitate its reading, in Section~\ref{sec:background} we introduce three main components (``toolboxes'') of the necessary background.  Section~\ref{sec:evolvents} contains, among other things, a rigorous definition of a Poncelet curve, an analysis of when such a curve belongs to the unit disk, as well as Mirman's  parametrization of a package of Poncelet curves, along with a number of interesting examples.  Section~\ref{sec:poncelet_general} contains a characterization of complete Poncelet curves of minimal class and the equivalence of their constructions, unifying in a certain sense the previous work of  Daepp, Gorkin, Gau, Wu, Fujimura and collaborators.  Finally, in Section~\ref{sec:Mirman} we explore in more detail the setting related to Darboux's result (Theorem~\ref{thm:Darboux}), namely when one of the components of the complete Poncelet curve is an ellipse.

\section{Our toolbox} \label{sec:background}

\subsection{Tool set 1: Projective algebraic geometry} \label{sec:geometry}
 
 \
 
We start with a few elementary notions from projective algebraic geometry, which  we will need throughout this paper. All these results are standard and can be found in practically any text on classical algebraic geometry, see e.g.~\cite{MR2549804, MR2724360, MR1934359, MR2815937, MR2791970, MR3100243}.

\subsubsection{The projective plane} \label{sec:basics}
\

For any field $\K$ such as $\R$ or $\C$, the \emph{projective plane} $\P^2(\K)$  over $\K$ is the set of all equivalence classes of   triples $(x_1,x_2,x_3)\in \K^3\setminus\{(0,0,0)\}$, where  $(x_1,x_2,x_3)$ and $(x_1',x_2',x_3')$ are equivalent if and only if $(x_1',x_2',x_3')=(\lambda x_1, \lambda x_2, \lambda x_3)$ for some  $\lambda \in \K\setminus \{0\}$. The equivalence class of a  triple $(x_1,x_2,x_3)\in \K^3\setminus \{(0,0,0)\}$ is denoted $(x_1:x_2:x_3)$ and is called the point of $\P^2(\K)$ with  \emph{homogenous coordinates} $(x_1,x_2,x_3)$.  As usual, we  embed the affine plane $\K^2$ in $\P^2(\K)$ by identifying the point $(x_1,x_2)\in \K^2$ with the point  $(x_1:x_2:1)\in \P^2(\K)$ and conversely, any point $(x_1:x_2:x_3)\in \P^2(\K)$ such that $x_3 \neq 0$ with the point $(x_1 / x_3, x_2/x_3)\in \K^{2}$. The complement of $\K^2$ in $\P^2(\K)$, that is, the set $\{(x_1: x_2 : x_3) \in \P^2(\K) : x_3=0\}$, is called the \emph{line at infinity}.
 
The \emph{real projective plane} $\P^2(\R)$ is canonically embedded in   \emph{complex projective plane}  $\P^2(\C)$.
Furthermore, for much of this paper, we will identify $\R^2$ with   $\C$   and  hence  $\C$ is embedded in    $\P^2(\R)$  as above:
\begin{equation}\label{embedding C in RP^2}
 \C=\R^2  \subset \P^2(\R), \quad x+iy=(x,y) \mapsto (x:y:1).
 \end{equation}
We view $\P^2(\R)$ as  a   real two-dimensional compact manifold in the usual way so that  (\ref{embedding C in RP^2}) is an open embedding.
The image of this embedding is  dense in $\P^2(\R)$, and hence $\P^2(\R)$ is a compactification of $\mathbb C$. This compactification   is not to be confused 
with the one-point compactification of $\C$ which is the Riemann sphere.
 
 \subsubsection{Real algebraic curves} \label{sec:basics}
 \
 
A \emph{plane real  algebraic curve}  of degree $d$  is an algebraic curve $\Gamma\subset \P^2(\C)$ 
defined by an equation $F(x_1,x_2,x_3)=0$, where
 $F(x_1,x_2,x_3)$ is a nonzero homogeneous polynomial  of degree $d$ with \emph{real} coefficients. 
Notice  that $F(x_1,x_2,x_3)$ being zero only depends on the equivalence class of the triplet $(x_1,x_2,x_3)$ since $F(\lambda x,  \lambda y, \lambda z)=\lambda^d F(\lambda x,  \lambda y,\lambda z)$, and that although we speak about a real curve $\Gamma$, we have $\Gamma\subset \P^2(\C)$. We say that the curve $\Gamma$ is \emph{irreducible} if the polynomial $F(x_1,x_2,x_3)$ is irreducible over $\C$.

The \emph{set of real points} of a real algebraic curve $\Gamma\subset \P^2(\C)$ is defined as the set
$$
\Gamma(\R):=\Gamma\cap \P^2(\R).  
$$
In this paper, we will be very careful to always distinguish between $\Gamma$ and $\Gamma(\R)$.
 
If $f(x,y)$ is any (not necessarily homogenous) polynomial of degree $d$ with real coefficients, then  
$F(x_1,x_2,x_3):=x_3^d f(x_1/x_3,x_2/x_3)$ is a homogenous 
polynomial of degree $d$ with real coefficients.
Thus, a nonzero polynomial $f(x,y)$ with real coefficients  defines a real algebraic curve  $\Gamma$ via this homogenization.
Geometrically, $\Gamma$ is the   projective closure of the curve given by the equation $f(x,y)=0$, that is,  $\Gamma=\text{clo}{\{(x,y)\in \C^2 : f(x,y)=0\}}$. Conversely,    $F(x_1,x_2,x_3)$ can be dehomogenized by setting $f(x,y):=F(x,y,1)$ and we have $\Gamma  \cap \C^2=\{(x,y)\in \C^2 : f(x,y)=0\}$.

\begin{remark}
We should emphasize that  a real plane algebraic curve $\Gamma$ is more than its set of real points $\Gamma(\R)$
and even the set of complex points of $\Gamma$ does not determine its  defining polynomial $F(x_1,x_2,x_3)$.
However, if we assume that  $F(x_1,x_2,x_3)$ is square-free, that is, without any repeated  irreducible factors,
then $\Gamma$ determines $F(x_1,x_2,x_3)$ uniquely up to a nonzero multiplicative constant.  We will always make this assumption.
\end{remark}

\subsubsection{Nonsingular and singular points}

\

Suppose   $\Gamma$ is defined by the equation $F(x_1,x_2,x_3)=0$.
Then  $(a_1:a_2:a_3)\in \Gamma$  is called  a \emph{nonsingular point} of $\Gamma$ if
$$
\left(\frac{\partial F}{\partial x_1} (a_1,a_2,a_3),\frac{\partial F}{\partial x_2}  (a_1,a_2,a_3), \frac{\partial F}{\partial x_3}  (a_1,a_2,a_3)\right)\not= (0,0,0);
$$
otherwise $(a_1:a_2:a_3)$ is  a \emph{singular point}.
We say that $\Gamma$ (resp., $\Gamma(\R)$) is \emph{nonsingular}, if all points of $\Gamma$ (resp., $\Gamma(\R)$) are nonsingular.
Note that if $(a_1:a_2:a_3)$ is a nonsingular point, then the equation
\begin{equation}\label{def: tangent line}
\frac{\partial F}{\partial x_1}(a_1:a_2:a_3)\, x_1 + \frac{\partial F}{\partial x_2} (a_1:a_2:a_3)\, x_2+\frac{\partial F}{\partial x_3} (a_1:a_2:a_3)\, x_3 =0
\end{equation}
defines a line in $\P^2(\R)$ (resp., $\P^2(\C)$) which is called the \emph{tangent line} of $\Gamma(\R)$ (resp., $\Gamma$)
at the point $(a_1:a_2:a_3)$. 
If $(a_1:a_2:a_3)$  is a singular point, then the equation (\ref{def: tangent line}) is meaningless.
However, since an algebraic curve has only finitely many singular points (and hence every singular point is an isolated point),   tangent lines at a singular point 
$(a_1:a_2:a_3)$   can be defined by continuity.  
For example,
Figure~\ref{fig:tangents} shows the tangent lines at a \emph{node} (or \emph{ordinary double point}) and at a \emph{cusp} (or \emph{return point}).

\begin{figure}[htb]
	\begin{center}
		 \includegraphics[width=0.6\linewidth]{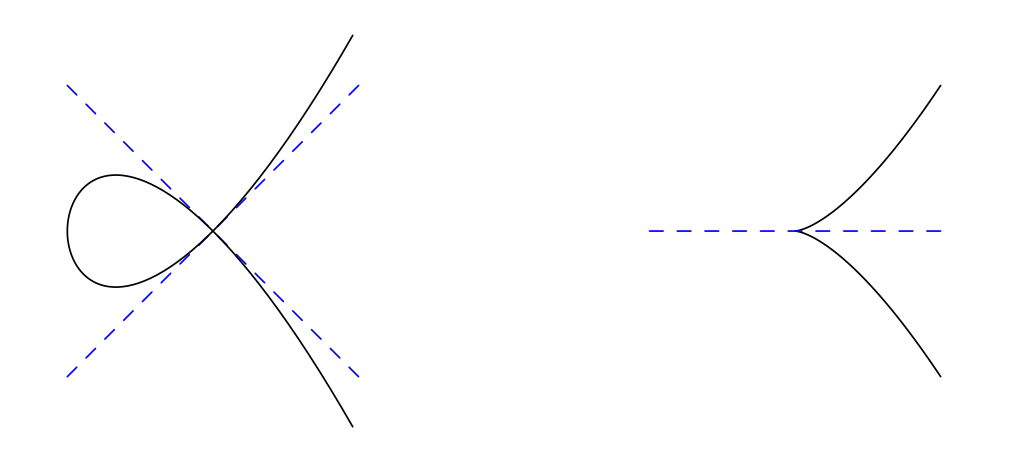}
		 	\end{center}
	\caption{Tangent lines at a node (left) and at a cusp.}
	\label{fig:tangents}
\end{figure} 

\subsubsection{Duality} \label{subsec:duality}

\

The  \emph{dual projective plane} $\P^2(\K)^*$  is the set of lines in $\P^2(\K)$.
We will identify $\P^2(\K)$ with $\P^2(\K)^*$  by letting the point $(u_1:u_2:u_3)\in \P^2(\K)$ correspond to the line in $\P^2(\K)$ given by the equation
\begin{equation}\label{eq:negative sign}
u_1x_1+u_2x_2-u_3x_3=0.
\end{equation}
Note that via this identification, we can view  $\P^2(\R)^*$ as a subset of $\P^2(\C)^*$.
The motivation for the negative sign in (\ref{eq:negative sign}) will be explained in the next subsection.
 
The \emph{dual}  of a real algebraic curve $\Gamma\subset \P^2(\C)$ is the real algebraic curve $\Gamma^*\subset \P^2(\C)$ whose points correspond to the tangent lines of $\Gamma$, that is, 
\begin{equation} \label{eq:defDual}
\Gamma^*:=\left \{(u_1:u_2: u_3)\in \P^2(\C) :\, u_1 x_1+u_2 x_2 - u_3 x_3 =0 \text{ is a  tangent line of $\Gamma$}  \right\}.\\
\end{equation}
In this context, triples $(u_1:u_2: u_3)$ are known as the  \emph{tangent coordinates} of the dual $\Gamma^*$.

Given an equation $F(x_1,x_2,x_3)=0$ that defines $\Gamma$, we can find an equation $G(u_1,u_2,u_3)=0$ that defines $\Gamma^*$
by eliminating the variables $x_1,x_2,x_3$ from the system of equations 
$$
F(x_1,x_2,x_3)=0,\ \frac{\partial F}{\partial x_1}(x_1,x_2,x_3)=u_1,\ \frac{\partial F}{\partial x_2}(x_1,x_2,x_3)=u_2,\ \frac{\partial F}{\partial x_3}(x_1,x_2,x_3)=-u_3.
$$
In practice, this can be achieved by computing a suitable Gr\"obner basis.  For example,  to obtain $G(u_1,u_2,u_3)$ from $F(x_1,x_2,x_3)$,
we can use the Mathematica code 
\begin{align*}
&\mbox{{\tt GroebnerBasis[\{F, D[F,x\textsubscript{1}]-u\textsubscript{1}, D[F,x\textsubscript{2}]-u\textsubscript{2}, D[F,x\textsubscript{3}]+u\textsubscript{3}\},}}\\
&\mbox{{\tt\ \ \ \{u\textsubscript{1},u\textsubscript{2},u\textsubscript{3}\},\{x\textsubscript{1},x\textsubscript{2},x\textsubscript{3}\}, MonomialOrder -> EliminationOrder]}},
\end{align*}
where {\tt F}  stands for the polynomial $F(x_1,x_2,x_3)$. The output will be a Gr\"obner basis with a single element, namely the desired polynomial $G(u_1,u_2,u_3)$.
All plots of dual curves in this paper were produced by first obtaining the equation of the dual in this manner.

The terminology ``dual'' is justified by the fact that $(\Gamma^*)^*=\Gamma$. The degree  of  $\Gamma^*$  is called the \emph{class} of $\Gamma$.
The relationship between the degree $d$ and  the class of $\Gamma$   is rather subtle and is  described by Pl\"ucker's formula. In the special case when $\Gamma$ is nonsingular, Pl\"ucker's formula says that the class of $\Gamma$ is equal to $d(d-1)$. Since $(\Gamma^*)^*=\Gamma$, 
this means that if $\Gamma$ is nonsingular and $d>2$, then $\Gamma^*$ must have singular points.
The dual of nonsingular conic ($d=2$) is again a nonsingular conic; the dual of a line ($d=1$) is a point.

We will be mostly interested in the set of real points of the duals  of real algebraic curves.
Suppose  $C\subseteq  \Gamma(\R)$ is a  union of path-components of the set of real points of a real algebraic curve $\Gamma$. Then we define the dual of $C$ as the set
\begin{equation}
C^*:=\left \{(u_1:u_2:u_3)\in \P^2(\R):\, u_1 x_1+u_2 x_2 - u_3 x_3 =0 \text{ is a  tangent line of $C$}  \right\}.
\end{equation}
In particular, $C^* \subseteq \Gamma^*(\R)$.
Furthermore,  if $C$ is a path-component of $\Gamma(\R)$, then $C^*$ is a path-component of $\Gamma^*(\R)$ and  $(C^*)^*=C$.

\subsubsection{Reciprocation about $\bbT$}

\

A nice geometric interpretation of duality in the real projective plane $\P^2(\R)$  can be given in terms of so-called reciprocation about the unit circle $\bbT$.
In the following, we will view $\C\subset \P^2(\R)$ as in (\ref{embedding C in RP^2}). Then  $\bbT$ is the set of real points
of the algebraic curve given by $x_1^2+x_2^2-x_3^2=0$.

The  \emph{reciprocal} or \emph{polar} of a   point $\zeta=u+iv \not =0$ in $\C$  about  $\bbT$ is the line $\ell$  that contains the point  $\zeta/|\zeta|^2$ 
and is perpendicular to the ray from $0$ through  $\zeta$ (see Figure~\ref{fig:Fujimura}).  We also say that $\zeta$ is the \emph{reciprocal} or the \emph{pole} of $\ell$.
The reciprocal of the origin $0$ is the line at infinity $\ell_\infty$. We can extend reciprocation about $\bbT$ to give a bijection between  points
and lines  in $\P^2(\R)$.
It is an easy exercise to verify that the reciprocal of the point $(u_1:u_1:u_3)\in \P^2(\R)$ about $\bbT$ is precisely the line  given by the equation $u_1x_1+u_2x_2-u_3x_3=0$ (which motivates the definition \eqref{eq:negative sign}--\eqref{eq:defDual}).

Suppose $l$ is a line in $\C$ that intersect the unit circle $\T$ in two distinct points $z_1$ and $z_2$. Then $l$ is 
given by the equation
\begin{equation} \label{eq: line given by two points on T}
z+z_1z_2 \overline{z} -(z_1+z_2)=0
\end{equation}
in the complex variable $z$. 
If we write $z=x+iy$ and compare (\ref{eq: line given by two points on T}) to  $ux+vy -1=0$, we   obtain 
\begin{equation} \label{correspUandVb}
u=\frac{1+z_1z_2}{z_1+z_2}\ , \quad v=i\frac{1-z_1z_2}{z_1+z_2}, \quad \zeta = \frac{2z_1z_2}{z_1+z_2}.
\end{equation}
Using  (\ref{correspUandVb}), it is now straightforward to express  the elementary symmetric functions $z_1+z_2$ and $z_1z_2$ in terms of $\zeta=u+iv$
as follows:
\begin{equation} \label{eq: elementary symmetric and zeta}
z_1+z_2 = \frac{2}{\overline{\zeta}}\ ,\quad  z_1z_2 = \frac{\zeta}{\overline{\zeta}}\ .
\end{equation}
This in turn allows us to write 
\begin{equation} \label{subst1}
	z_1= \frac{1+ i \sqrt{\zeta \overline{\zeta }-1}}{\overline{\zeta}}, \quad z_2= \frac{1- i \sqrt{\zeta \overline{\zeta }-1}}{\overline{\zeta}} \ .
\end{equation}
Geometrically, the points $z_1, z_2 \in \T$ are the points
where the two tangent lines to $\T$ containing the point $\zeta$ touch  $\T$ (see Figure~\ref{fig:Fujimura}).

\subsubsection{Mirman's parametrization} \label{sec:MirmansParam}

\

Let $\Gamma$ be a real algebraic curve   such that $\Gamma(\R)\subset \D$.
In this case, we can use reciprocation about $\T$  to give an alternative parametrization of   $\Gamma^*$ that has been systematically exploited in the work of Mirman and collaborators (see  e.g., \cite{Mirman:2003ho,Mirman:2005dj}).
Suppose $\Gamma$ has class $m$ so that
$\Gamma^*$ is given by an equation  $G(u_1,u_2,u_3)=0$, where $G(u_1,u_2,u_3)$ is a homogenous polynomial of degree $m$.
Let $g(u,v):=G(u,v,1)$ be the dehomogenization.
Using the substitution  (\ref{correspUandVb}), define a polynomial
\begin{equation} \label{MirmanSubstitution}
P(z_1,z_2):=  (z_1+z_2)^m g\left(\frac{1+z_1z_2}{z_1+z_2},i\frac{1-z_1z_2}{z_1+z_2}\right).
\end{equation}
Then $P(z_1,z_2)$ is a symmetric polynomial of (total) degree $2m$  with complex coefficients.
By our discussion above,
a point $\zeta=u+iv\in \C$ in the exterior of $\T$ lies on $\Gamma^*(\R)$ if and only if the 
points $z_1,z_2\in \T$ given by (\ref{correspUandVb}) or Figure~\ref{fig:Fujimura}  satisfy the equation
\begin{equation}\label{eq:P=0b}
P(z_1,z_2)=0.
\end{equation}
We say that in \eqref{eq:P=0b}, $\Gamma^*$ (or equivalently, $\Gamma$) is written in \textit{tangent coordinates}, and refer to it as \textit{Mirman's parametrization} of the algebraic curve.
 
\begin{figure}[htb]
	\begin{center}
		\begin{overpic}[scale=0.8]{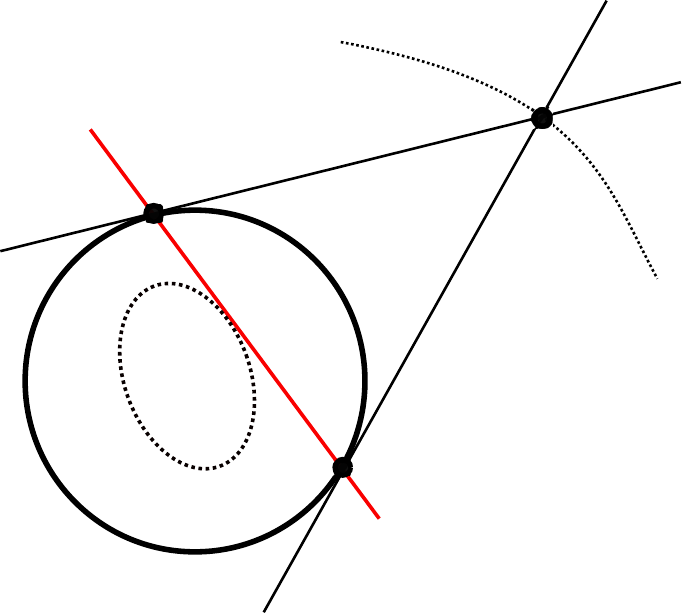}
			\put(31,98){ $z_1$}
			\put(80,30){ $z_2$}
			\put(65,50){ $\ell$}
			\put(119,103){ \small $\zeta$}
			\put(30,24){ \small $\Gamma(\R)$}
			\put(145,67){\small  $\Gamma^*(\R)$}
			\put(2,20){ $\bbT$}
		\end{overpic}
	\end{center}
	\caption{Geometric construction of the dual curve via reciprocation.}
	\label{fig:Fujimura}
\end{figure}

\subsubsection{Real foci} \label{sec:foci}

\

In the study of real conics (real algebraic curves of degree $2$), the points called foci have played a central role since antiquity.
Pl\"ucker generalized the concept to curves of higher degree as follows.
A point $(a_1:a_2:a_3)\in \P^2(\C)$ is called a \emph{focus} of a real algebraic curve $\Gamma\subset \P^2(\C)$  if the two lines through $(a_1:a_2:a_3)$ and the so-called circular points at infinity $(1: \pm i: 0)$ are tangent to  $\Gamma$.  A focus $(a_1:a_2:a_3)$ of $\Gamma$ is called a \emph{real focus} if $(a_1:a_2:a_3)\in \P^2(\R)$.

It is easy to verify that the lines in $\P^2(\C)$ through $(a_1:a_2:a_3)$ and $(1: \pm i: 0)$ are given by the equations $a_3x_1 \pm a_3 x_2 -(a_1\pm ia_2)x_3 =0$,
respectively. Thus, if $G(u_1,u_2,u_3)=0$ is the equation defining the dual curve $\Gamma^*$, then $(a_1:a_2:a_3)\in \P^2(\C)$
is a focus of $\Gamma$ if and only if $G(a_3, \pm i a_3, a_1\pm i a_2)=0$. If $(a_1:a_2:a_3)\in \P^2(\R)$, 
then $G(a_3, -i a_3, a_1- i a_2)=\overline{G(a_3, i a_3, a_1+ i a_2)}$ since the coefficients  of $G(u_1,u_2,u_3)$ are real. So, $(a_1:a_2:a_3)\in \P^2(\R)$ is a real focus of $\Gamma$ if and only if 
$G(a_3, i a_3, a_1+ i a_2)=0$. Viewing  $\C\subset \P^2(\R)$ as in (\ref{embedding C in RP^2}), we see that $z\in \C$ is a real focus of $\Gamma$ if and only if
\begin{equation}\label{eq:dualG}
G(1,i,z)=0.
\end{equation}

\subsection{Tool set 2: Orthogonal polynomials on the unit circle} \label{sec:blaschke}

\subsubsection{OPUC and Szeg\H{o} recursion} \label{sec:OPUC}

\

Throughout the rest of the paper we reserve the notation $\Phi_n(z)$ for monic polynomials of degree exactly $n$; 
when we need to make the dependence on the zeros explicit, we will write
\begin{equation} \label{notation:zeros}
\Phi_n(z; f_1,\dots, f_n) :=\prod_{j=1}^n \left(z-f_j \right).
\end{equation}
Moreover, if 
$$
\Phi_n(z)=\sum_{j=0}^nc_j z^j, \quad c_n=1, 
$$
then its reversed polynomial is  
\begin{equation} \label{reversed}
\Phi^*_n(z)=\sum_{j=0}^n \overline{c_j} z^{n-j}=z^n \, \overline{\Phi_n\left( 1/\overline{z} \right)}.
\end{equation}
Observe that $\Phi^*_n(z)$ can be of degree strictly less than $n$.

If $\Phi_{n}(z)=\Phi_{n}(z; f_1,\dots, f_n)$ is a monic polynomial of degree $n$ with all its zeros $f_j\in \mathbb D$, then there exists a measure $\mu$ on the unit circle $\bbT$ such that $\Phi_n(z)$ is orthogonal to $\{z^j\}_{j=0}^{n-1}$ in $L^2(\bbT,\mu)$.  There are actually many such measures $\mu$ and one example is 
$$
c\cdot |\Phi_n(e^{i\theta};f_1,\ldots,f_n)|^{-2}d\theta, 
$$
where $c$ is a normalization constant. Here we identify measures on $\bbT$ and measures on $[0,2\pi)$ in the usual way.  This means that the polynomial $\Phi_n(z)$ has all the properties of an orthogonal polynomial on the unit circle (OPUC).

The most important property of OPUC for our investigation is the \emph{Szeg\H{o} recursion}, which states that if $\Phi_k(z)$ and $\Phi_{k+1}(z)$ are consecutive orthogonal polynomials for a measure $\mu$ on $\bbT$, then there exists some $\alpha_k\in\bbD$ so that
\begin{equation} \label{szego}
\begin{pmatrix} 
\Phi_{k+1}(z)\\ \Phi_{k+1}^*(z)
\end{pmatrix}= \begin{pmatrix}
z & -\overline{\alpha_k} \\ -\alpha_k z & 1
\end{pmatrix} \, 
\begin{pmatrix}
\Phi_{k}(z)\\ \Phi_{k}^*(z)
\end{pmatrix}  . 
\end{equation}
It is easy to see that $\alpha_k=- \overline{\Phi_{k+1}(0)}$; these are known as the \emph{Verblunsky coefficients}  for the OPUC.  Notice that the Szeg\H{o} recursion can be inverted, allowing one to recover the orthogonal polynomials of degree smaller than $n$ from the degree $n$ orthogonal polynomial.  This tells us that even though there are many choices of orthogonality measure for the polynomial $\Phi_n$, they all have the same first $n$ orthogonal polynomials $\{\Phi_j(z)\}_{j=0}^{n-1}$ and hence they all have the same first $n$ Verblunsky coefficients (see \cite[Theorem 1.7.5]{Simon05b}).  Hence, any monic $\Phi_n(z)$ with all of its zeros in $\bbD$ can be alternatively parametrized by its Verblunsky coefficients $\alpha_0, \dots, \alpha_{n-1}$ (instead of its zeros).   When we need to make this dependence explicit, we will use the notation
\begin{equation} \label{notation:Verb}
\Phi_n(z)=\Phi_n^{(\alpha_0, \dots, \alpha_{n-1})}(z)
\end{equation}
in contrast to \eqref{notation:zeros}.

\subsubsection{CMV matrices} \label{sec:cmv}
\

If the orthogonality measure $\mu$ on $\mathbb T$ has infinitely many points in its support, then the sequence of its Verblunsky coefficients is also infinite.  In this case, one can define a sequence of  $2\times2$ matrices $\{\bm \Theta_j\}_{j=0}^{\infty}$ by
\[
\bm \Theta_j=\begin{pmatrix}
\bar{\alpha}_j & \sqrt{1-|\alpha_j|^2}\\
\sqrt{1-|\alpha_j|^2} & -\alpha_j
\end{pmatrix}
\]
and the operators $\bm \mcl$ and $\bm \mcm$ by
\[
\bm \mcl=\bm \Theta_0\oplus\bm \Theta_2\oplus\bm \Theta_4\oplus\cdots,\qquad\mcm=\bm 1\oplus\bm \Theta_1\oplus\bm \Theta_3\oplus\cdots
\]
where the initial $\bm 1$ in the definition of $\bm \mcm$ is a $1\times1$ identity matrix.  The \emph{CMV matrix} corresponding to $\mu$ is then given by $\bm{\mathcal G} = \bm{\mathcal G}(\{\alpha_j \}):=\bm \mcl\bm \mcm$, or explicitly,
\begin{equation} \label{cmvInf}
\bm{\mathcal G} := \begin{pmatrix}
\overline{\alpha_0} & \overline{\alpha_1} \rho_0 & \rho_1 \rho_0  & 0 & 0 & \dots \\ 
\rho_0 &- \overline{\alpha_1} \alpha_0 & -\rho_1 \alpha_0  & 0 & 0 & \dots \\
0 & \overline{\alpha_2} \rho_1 & - \overline{\alpha_2} \alpha_1   &  \overline{\alpha_3} \rho_2  & \rho_3 \rho_2 & \dots \\
0 & \rho_2 \rho_1 & - \rho_2 \alpha_1   &  -\overline{\alpha_3} \alpha_2  &- \rho_3 \alpha_2 & \dots \\
0 & 0 & 0  &   \overline{\alpha_4} \rho_3  & - \overline{\alpha_4} \alpha_3 & \dots \\
\dots & \dots & \dots  &   \dots  & \dots & \dots 
\end{pmatrix}, \quad \rho_n=\sqrt{1-|\alpha_n|^2}
\end{equation}
(see \cite[Section 4.2]{Simon05b}).  Since each of $\bm \mcl$ and $\bm \mcm$ is a direct sum of unitary matrices, each of $\bm \mcl$ and $\bm \mcm$ is unitary and hence $\bm \mcg$ is unitary as an operator on $\ell^2(\bbN)$.  The principal $n\times n$ submatrix of $\bm \mcg$, which depends only on the Verblunsky coefficients $\alpha_0, \dots, \alpha_{n-1}$, will also be called the $n\times n$ \emph{cut-off CMV matrix}, which we will denote by $\bm \mcg^{(n)}= \bm \mcg^{(n)}(\alpha_0, \dots, \alpha_{n-1})$.
The cut-off CMV matrices are the canonical representation of the compressed multiplication operator (see \cite{MR3945586}) and satisfy	
\begin{equation}
\label{characteristicequ}
\Phi_n^{(\alpha_0, \dots, \alpha_{n-1})}(z)=\det (z\bm I_n -\cmv{n}).
\end{equation}
Thus it is true that all of the eigenvalues of $\cmv{n}$ are in the unit disk $\bbD$.  Furthermore, we see from the construction that the operator norm 
$\|\cmv{n}\|=1$  and 
$$
\rank( \bm I_n-\bm \mcg^{(n)}\bm \mcg^{(n)*})=\rank(\bm I_n-\bm \mcg^{(n)*}\bm \mcg^{(n)})=1.
$$  

\subsubsection{Paraorthogonal polynomials} \label{sec:POPUC}
\

 A \emph{paraorthogonal polynomial} on the unit circle (POPUC) can be generated by the Szeg\H{o} recursion \eqref{szego} if we replace the last Verblunsky coefficient $\alpha_{n-1}$ by a value $\lambda\in \mathbb T$:
\begin{equation}
\label{def_POPUC}
\Phi_n^{(\alpha_0, \dots, \alpha_{n-2},\lambda)}(z)=  z\, \Phi_{n-1}(z) -\overline{\lambda}\,  \Phi_{n-1}^*(z), \quad \Phi_{n-1}(z)=\Phi_{n-1}^{(\alpha_0, \dots, \alpha_{n-2})}(z).
\end{equation}
The $n$ zeros $z_{j}=z_{n,j}^\lambda$, $j=1, \dots, n$, of $\Phi_n^{(\alpha_0, \dots, \alpha_{n-2},\lambda)}(z)$ are distinct and belong to $\mathbb T$.
This can be seen by noting that the zeros of $\Phi_n^{(\alpha_0, \dots, \alpha_{n-2},\lambda)}(z)$ are the eigenvalues of an $n\times n$ unitary dilation of $\cmv{n-1}$.  By this we mean that one can take the cutoff CMV matrix corresponding to the Verblunsky coefficients $\{\alpha_j\}_{j=0}^{n-2}$ and add one row and one column to get a unitary $n\times n$ matrix whose characteristic polynomial is $\Phi_n^{(\alpha_0, \dots, \alpha_{n-2},\lambda)}(z)$:
\begin{equation}
	\label{characteristicParaorthog}
	\Phi_n^{(\alpha_0, \dots, \alpha_{n-2},\lambda)}(z)=\det (z\bm I_n -\cmv{n}), \quad \cmv{n}=\cmv{n}(\alpha_0, \dots, \alpha_{n-2},\lambda), \quad \lambda \in \bbT.
\end{equation}
  In fact it is true that all $n\times n$ unitary dilations of $\cmv{n-1}$ are obtained this way by an appropriate choice of $\lambda\in\bbT$.

\begin{definition} \label{def:popuc}
If in \eqref{def_POPUC}, $\Phi_{n-1}(z)=\Phi_{n-1}(z; f_1,\dots, f_{n-1})$,  that is, if $f_1,\dots, f_{n-1}$ are the zeros of $\Phi_{n-1}(z)$, then  the $1$-parametric family of points $\mathcal{Z}_n^\lambda  = \{z_{n,1}^\lambda , \dots , z_{n,n}^\lambda \}$ on $\bbT$ is called the \emph{paraorthogonal extension} of the zeros $f_1,\dots, f_{n-1}$ of $\Phi_{n-1}(z)$.
\end{definition}
It is known that two sets, $\mathcal{Z}_n^{\lambda_1} $ and $\mathcal{Z}_n^{\lambda_2} $ from this extension, with $\lambda_1, \lambda_2\in \bbT$, $\lambda_1\neq \lambda_2$, determine the original points $f_1,\dots, f_{n-1}$, and hence 
$\Phi_{n-1}(z)$, completely (Wendroff's Theorem for OPUC, see e.g.~\cite{MR3945586}). 

For further details on orthogonal polynomials on the unit circle see e.g.~the modern treatise \cite{Simon05b}, or the more classical texts \cite{MR0133642} and \cite{szego:1975}.

\subsubsection{Blaschke products} \label{sec:BP}
\

Very much related with OPUC is the notion of Blaschke products. A normalized \emph{Blaschke product} is a rational function of the form 
\begin{equation} \label{notation:Blaschke}
	b_n(z; f_1,\dots, f_n)=   \frac{ \Phi_{n}(z)}{\Phi_{n}^*(z)},
\end{equation}
where points $f_1, f_2,\dots, f_{n-1}$ are inside the unit disk $\mathbb D$,
\[
\Phi_{n}(z)= \Phi_{n}(z; f_1,\dots, f_n),\qquad\qquad\Phi_{n}^*(z)= \Phi_{n}^*(z; f_1,\dots, f_n),
\]
see notation \eqref{notation:zeros}--\eqref{reversed}. We will say that the Blaschke product $b_n(z)$ has \emph{degree $n$}. 

For any $\lambda\in \mathbb T$, the equation
\begin{equation} \label{solutionBlaschkeB}
	b_{n}(z) =\overline{\lambda}
\end{equation}
has exactly $n$ solutions,  $z_1^\lambda , \dots , z_n^\lambda$,   all distinct and on $\mathbb T$. 
\begin{definition} \label{def:identBlaschke}
	If  $ z_1^\lambda , \dots , z_n^\lambda $ are the solutions of \eqref{solutionBlaschkeB}, then we say that Blaschke product $b_n$ \emph{identifies} the set of points $\mathcal{Z}^\lambda=\{z_1^\lambda , \dots , z_n^\lambda \}$. 
\end{definition}

\subsection{Tool set 3: Matrices and numerical range} \label{sec:numrange}

\subsubsection{Class $\mathcal{S}_{n}$.} \label{sec:Sn}
\

A square complex matrix $\bm A\in \mathbb C^{n\times n}$ is a \emph{completely non-unitary contraction} if $\|\bm A\|\leq 1$ and all eigenvalues  of $\bm A$ are strictly inside the unit disk $\mathbb D$.  The space $\mathcal{S}_{n}$ is the set of completely non-unitary contractions in $\mathbb C^{n\times n}$ with defect index  
$$
\rank(\bm I_n-\bm A \bm A^{*})=\rank(\bm I_n-\bm A^{*}\bm A )=1.
$$
The spaces $\mathcal{S}_{n}$ and their infinite-dimensional analogues have been studied extensively, initially in the work of Livshitz \cite{MR0020719}, and in the 1960s by Sz.-Nagy and collaborators \cite{MR2760647}. 
A canonical example of a matrix in $\mathcal{S}_{n}$ ia a shift operator or $n\times n$ nilpotent Jordan block
\begin{equation}\label{eq:Jordan}
\bm J_n=\begin{pmatrix}
	0 & & &  & &\\
	1 & 0 & & & & \\
	& 1 & 0 & & \\
	& & \ddots & \ddots &  \\
	& & & 1 & 0
\end{pmatrix}_{n \times n}.
\end{equation}

A remarkable property of the class $\mathcal{S}_{n}$ is that the spectrum $\sigma(\bm A)$ of a matrix $\bm A\in \mathcal{S}_{n}$ determines the unitary equivalence class of $\bm A$. It turns out that  the equivalence class of $\bm A\in \mathcal{S}_{n}$ is determined even by  its numerical range $W(\bm A)$ (see the definition in Section~\ref{sec:numran} below).

There are several ``canonical'' representations for matrices (or rather, of their unitary equivalence classes) in $\mathcal{S}_{n}$, each having its own merit. For instance, we can think of elements of $\mathcal{S}_{n}$ as the ``compressions of the shift'' \cite{MR2760647} or ``compressed multiplication operators'' \cite{MR3945586} in a Hardy space setting.  
We can characterize $\bm A \in \mathcal{S}_{n}$ via their singular value decomposition (SVD), $\bm A=\bm U \bm D \bm V^{*}$, where $\bm U$ and $\bm V$ are unitary and $\bm D$ is the diagonal matrix $\diag(1,..., 1, a)$ with $0 \leq a <1 $ (see \cite{Wu:2000bn}).  Another representation, also in  \cite{Wu:2000bn}, says that each such a matrix $\bm A=\left(a_{i j}\right)_{i, j=1}^{n}\in \mathcal{S}_{n}$  if and only if it is unitarily similar to an upper triangular matrix with elements satisfying  $\left|a_{i i}\right|<1$ for all $i$, while for $i<j$,  
$$
a_{i j}=b_{i j}\left(1-\left|a_{i i}\right|^{2}\right)^{\frac{1}{2}}\left(1-\left|a_{j j}\right|^{2}\right)^{\frac{1}{2}}, \quad  b_{i j}= \begin{cases} 
\displaystyle	\prod_{k=i+1}^{j-1}  \left(-\bar{a}_{k k}\right) &  \text { if } i<j-1 \\[1mm]
	1 & \text { if } i=j-1.
\end{cases} 
$$

From our observations above it follows that the cut-off CMV matrix $\cmv{n}$ is in  the class $\mathcal{S}_{n}$.  In fact, from  \cite[Theorem 2]{MR3945586} we know that every matrix from the class $\mathcal{S}_{n}$ is unitarily equivalent to a cutoff CMV matrix.  In short, CMV matrices are another canonical representation of elements in $\mathcal{S}_{n}$, which has several advantages.
For instance, it  gives us an effective construction of the equivalence class of $\bm A\in \mathcal{S}_{n}$ from its eigenvalues $f_1, \dots, f_n$: from the monic polynomial $\Phi_n(z; f_1,\dots, f_n)$ use inverse Szeg\H{o} recursion to obtain the Verblunsky coefficients $\alpha_0, \dots, \alpha_{n-1}$ and take $\bm A=\cmv{n}(\alpha_0, \dots, \alpha_{n-1})$.

\subsubsection{Numerical range.}  \label{sec:numran}
\

The \emph{numerical range} or \emph{field of values} of  a matrix $\bm A\in \C^{n\times n}$ is the subset of the complex plane $\C$ given by
$$
W(\bm A)=\{ x^* \bm A x:\, x\in \C^n, \; x^* x=1 \}.
$$
In other words, the numerical range is the image of the Euclidean unit sphere under the continuous map $x \mapsto x^*\bm Ax$.

For a matrix $\bm A$, the numerical range $W(\bm A)$ is a compact and convex subset of $\C$ (Toeplitz--Hausdorff Theorem) that contains  the spectrum $\sigma(\bm A)$ of $\bm A$, and it is invariant by unitary conjugation of $\bm A$. This shows that for normal matrices we can reduce the analysis of $W(\bm A)$ to the case of diagonal $\bm A$; a straightforward consequence is that for a normal $\bm A$, $W(\bm A)$  is the convex hull of $\sigma(\bm A)$. In particular, for a unitary matrix $\bm A$, its numerical range is a convex polygon with vertices at its eigenvalues, inscribed in the unit circle $\mathbb T$. 

The so-called Elliptical Range Theorem (see e.g. \cite[Chapter 6]{MR3932079}, \cite[\S 1.3]{MR2978290} or \cite{MR1322932}) says that if $n=2$, then $W(\bm A)$ is an ellipse with eigenvalues $f_1$ and $f_2$ as foci, and the minor axis of length 
$$
\sqrt{\operatorname{tr} (\bm A^* \bm A) - |f_1|^2 - |f_2|^2}.
$$
The numerical range of the Jordan block \eqref{eq:Jordan} is the circular disk with center at $0$ and radius $r=\cos(\pi/(n+1))$ \cite[Proposition 1]{MR1072339} (notice that for $n=2$ it satisfies Chapple's formula \eqref{eq:Chapple}).

The most general statement in this direction is Kippenhahn's theorem, see Theorem~\ref{thm:Kippenhahn} in the Introduction, which states that  \emph{for $\bm A\in \C^{n \times n }$ there exists a real algebraic curve $\Gamma$ of class $n$ whose foci are  the eigenvalues of $\bm A$, such that $W(\bm A)$ is the convex hull of $\Gamma(\R)$.}  

In fact, the proof of Kippenhahn's theorem is constructive and contains the derivation of an equation for the dual $\Gamma^*$ of the algebraic curve $\Gamma$.  
Indeed, for the given matrix $\bm A\in \C^{n \times n }$,  consider the homogeneous  polynomial 
\begin{equation} \label{defKippenhahn}
	G_{\bm A}(u_1, u_2, u_3):=\det (u_1 \Re \bm A+u_2 \Im \bm A - u_3 \bm I),
\end{equation}
where $\Re \bm A:=\left(\bm A+\bm A^{*}\right) / 2$ and $\Im \bm A:=\left(\bm A-\bm A^{*}\right) /(2 i)$ are the real and imaginary parts of $\bm A$, respectively, and $\bm I $ in this case denotes the $n \times n $ identity matrix. It is easy to see that $G_{\bm A}(u_1, u_2, u_3)$ is a homogenous polynomial of degree $n$ with real coefficients.
Thus  $G_{\bm A}(u_1, u_2, u_3)=0$ defines a real algebraic curve $\Gamma_{\bm A}^*\subset \P^2(\C)$ of degree $n$ that is the dual of an algebraic curve $\Gamma_{\bm A} \subset \P^2(\C)$
of class $n$.  We will call $\Gamma_{\bm A}$ the \emph{Kippenhahn curve}  of $\bm A$.
As it follows from \cite{MR59242, MR2378310} (see also \cite[Theorem 6.1]{Langer:2007bv}), if we denote by $\lambda_\varphi\in \R$, $\varphi\in [0,2\pi]$, the maximal eigenvalue of the matrix $\Re(e^{-i\varphi} \bm A)$ then $(\cos \varphi: \sin \varphi: \lambda_\varphi)\in \P^{2}(\R)$ belongs to $\Gamma_{\bm A}^*(\R)$ and
the equation
$$
u_1 \cos \varphi + u_2 \sin \varphi  - u_3 \lambda_\varphi=0
$$
defines a supporting line to $W(\bm A)$. In consequence,  
the numerical range $W(\bm A)$ is the convex hull of $\Gamma_{\bm A}(\R)$. 
Finally, as seen in \eqref{eq:dualG},  the real foci of $\Gamma_{\bm A}$ are the solutions of
$$
G_{\bm A}(1, i,  z)=\det ( \bm A-\left. z \bm I \right)=0,
$$
that is, the eigenvalues of $\bm A$. 

\subsubsection{Dilations.} \label{sec:dilations}
\

We say that an $m \times m$ matrix $\bm A$ \emph{dilates} to the $n\times n$  matrix $\bm B$ ($m<n$) if there is an isometry $\bm V$ from $\mathbb{C}^{m}$ to $\mathbb{C}^{n}$ such that $\bm A=\bm V^{*} \bm B \bm V$. This is equivalent to saying that $\bm B$ is unitarily similar to an $n\times n$ matrix of the form $$
\begin{pmatrix} \bm A & * \\ * & *\end{pmatrix} ,
$$ 
in which $\bm A$ appears in the upper left corner. It is a well-known fact that the numerical range is monotone by dilation:   if $\bm A$ dilates to $\bm B$ then  $W(\bm A)\subset  W(\bm B)$.

An important dilation, which is also closely related to the results we discuss, is the \emph{unitary dilation of completely non-unitary contractions}. The notion of completely non-unitary contraction was already introduced in Section~\ref{sec:numrange}.  A classical result of Halmos \cite[Problem 222(a)]{Halmos82} says that every completely non-unitary contraction $\bm A$ has unitary dilations.

As we have seen in Section~\ref{sec:numrange}, OPUC provides not only an effective construction  of such a matrix $\bm A$ from any equivalence class of $\mathcal{S}_{n-1}$, but also of its $1$-parametric family of unitary dilations. Indeed, for $\bm A=\cmv{n-1}(\alpha_0, \dots, \alpha_{n-2}) \in  \mathcal{S}_{n-1}$, its unitary dilations are given by  $\cmv{n}(\alpha_0, \dots, \alpha_{n-2}, \lambda)$, with $\lambda \in \T$.

\section{Curves inscribed in polygons} \label{sec:evolvents}

\subsection{Definitions} \label{sec:defs} 
\

Poncelet's closure theorem  (see Theorem~\ref{thm:Poncelet} in the Introduction) sets up a  leitmotiv of this paper: curves in $\mathbb D$ that are tangent to polygons  with vertices on $\mathbb T$. More precisely, we will be interested in real algebraic curves that satisfy the Poncelet porism: if it is inscribed in a certain polygon, it is so for the whole continuum of such polygons (in other words, curves that are envelops of such a family of polygons). We need to make all these notions rigorous.

\subsubsection{Polygons} \label{def: polygonal chain}
\

We denote by $[a,b]$ the straight segment joining points $a, b\in \C$.

For $n\geq 3$, a \emph{polygon} with $n$ vertices  in $\C$ is a union of $n$ distinct segments  
\begin{equation}\label{eq:polygonal chain}
\mathscr{P}=[z_1,z_2]\cup [z_2,z_3] \cup \cdots  [z_{n-1},z_{n}] \cup [z_{n},z_{1}],
\end{equation}
where $z_1,\ldots,z_{n}\in \C$ are $n$ distinct points, such that any two segments intersect in at most one point and 
 any two segments sharing a common endpoint are noncollinear.
The $n$ segments are called the \emph{sides} or the \emph{edges} of the polygon.
If the sides of $\mathscr{P}$ only intersect when they share a common endpoint, then $\mathscr{P}$ is called   \emph{simple}, and if all its vertices belong to $\T$ we say that $\mathscr{P}$ is \textit{inscribed} in $\T$. 
It is well-known that any simple polygon inscribed in $\bbT$ is convex.

We will often view the segments  in (\ref{eq:polygonal chain}) as directed line segments
and hence $\mathscr{P}$  as an oriented  piecewise linear closed curve; in consequence, the notation $-\mathscr{P}$   stands for the polygon $\mathscr{P}$  traversed in the opposite direction. 
Note that every simple polygon is a Jordan curve and hence
divides the complex plane $\C$ into an interior region and an exterior region. As usual,   we say that a simple   polygon  $\mathscr{P}$ is 
\textit{positively oriented} if the interior is to the left. Equivalently, a simple   polygon  is 
positively oriented if its winding number with respect to any point in its interior  is equal to $1$.  
For a general (not necessarily simple) oriented  polygon   $\mathscr{P}$, if  we traverse $\mathscr{P}$ in the direction of the orientation, 
then at each vertex we turn by a nonzero angle between $-\pi$ and $\pi$. The sum of these turning angles divided by $2\pi$ is called the \emph{turning number} of 
$\mathscr{P}$.  If $\mathscr{P}$ is simple, then the turning number is equal to  the winding number with respect to a point in the interior.
We say that $\mathscr{P}$ is positively oriented if its turning number is positive.

It will be  convenient to also consider ``degenerate'' polygons with two  vertices.
These would be chains of the form $\mathscr{P}=[z_1,z_2]\cup [z_2,z_1]$, where we view $[z_1,z_2]$ and $[z_2,z_1]$
as distinct directed segments. By convention, the turning number of such a polygon is equal to $\pm 1$.

\subsubsection{Poncelet curves and Poncelet correspondence}   \label{sec:Ponceletcorresp}
\

\begin{definition}\label{def:closedcurve}
By a \textit{closed curve}  we mean a subset  $C \subset \P^2(\R)$ such that there exist a continuous map $\gamma:\bbT\rightarrow  \P^2(\R)$ with $C =\gamma(\bbT)$ and a real algebraic curve $\Gamma$ such that $C\subset \Gamma(\R)$ with the additional condition that either all points of $C$ are nonsingular or the parametrization $\gamma$ can be chosen such that if $\gamma(t_0)$ is a singular point of $\Gamma$, then for some sufficiently small interval $I\subset \bbT$ containing $t_0$, the curve $\gamma(I)$ lies in a single local branch of $\Gamma$.   
If $C$ is nonsingular, then $C$ is a simple closed curve in $\C$. 
We also allow for the degenerate case when $C$ consists of  just a single point.
\end{definition}
Notice that algebraicity is built into our definition of a closed curve; in the following, all closed curves that we consider will be duals of smooth closed curves in $ \P^2(\R)$.

\begin{definition}  \label{defPonc}
For $n\geq 2$, we say that a set of (not necessarily simple) $n$-sided polygons $\{ \mathscr{P}(z):\, z\in \bbT\}$  inscribed in $\bbT$ is a \emph{family of $n$-Poncelet polygons} if for each $z\in \bbT$, $z$ is one of the vertices of $\mathscr{P}(z)$, and the following condition holds:
$$
\text{ $w\in \bbT$ is a vertex of $\mathscr{P}(z)$} \quad \Rightarrow \quad \mathscr{P}(z)= \mathscr{P}(w).
$$
A closed curve $C \subset \D$ is a \emph{Poncelet curve of rank $n$} or an \emph{$n$-Poncelet curve} with respect to $\bbT$
if  there is a family of $n$-Poncelet polygons $\{ \mathscr{P}(z):\, z\in \bbT\}$  such that 
\begin{enumerate}[i)]
\item for every $z\in \bbT$, each  of the $n$ sides of $\mathscr{P}(z)$ is tangent to $C$, and each  tangent line  of $C$ passing through $z$
contains a side of $\mathscr{P}(z)$;
\item for every $z\in \bbT$, the two sides of $\mathscr{P}(z)$ with the common vertex at $z$ are the only tangents to $C$ emanating from $z$;
\item for every $\zeta\in C$ there exists $z\in \bbT$ such that one of the sides of $\mathscr{P}(z)$ is tangent to $C$ at the point $\zeta$.
\end{enumerate}
\end{definition}
Notice that an $n$-Poncelet curve $C$ determines the family $\{ \mathscr{P}(z):\, z\in \bbT\}$ uniquely. Observe also that we set a convention that $C\subset \bbD$ is a necessary condition for $C$ being called a Poncelet curve.

\begin{remark}
\begin{enumerate}[i)]
\item If $\mathcal{Z}^\lambda=\{z_1^\lambda , \dots , z_n^\lambda \}$ is a one-parametric family of  pair-wise distinct points on $\bbT$ such that for every $z\in \bbT$ there exists a unique value of the parameter $\lambda$ for which $z\in \mathcal{Z}^\lambda$, then convex hulls of $\mathcal{Z}^\lambda$  constitute a family of convex $n$-Poncelet polygons. An example of such a construction is the sets of points identified by a Blaschke product, see Definition~\ref{def:identBlaschke}. 

\item Clearly, convexity of a Poncelet curve does not imply convexity of its Poncelet polygons, see for example curve $C_2$ in Figure~\ref{fig:convexity}, left. Surprisingly, the converse does not hold either (despite an assertion in \cite[Remark 1]{Mirman:2003ho}): there are non-convex Poncelet curves with the corresponding family of convex Poncelet polygons; see Figure~\ref{fig:convexity}, right, for a non-convex $3$-Poncelet curve. Its construction is explained in Example~\ref{exampleMirman} below.

\begin{figure}[htb] 
	\begin{center}
		\begin{tabular}{c p{3mm}c}		
			\begin{overpic}[width=0.35\linewidth]{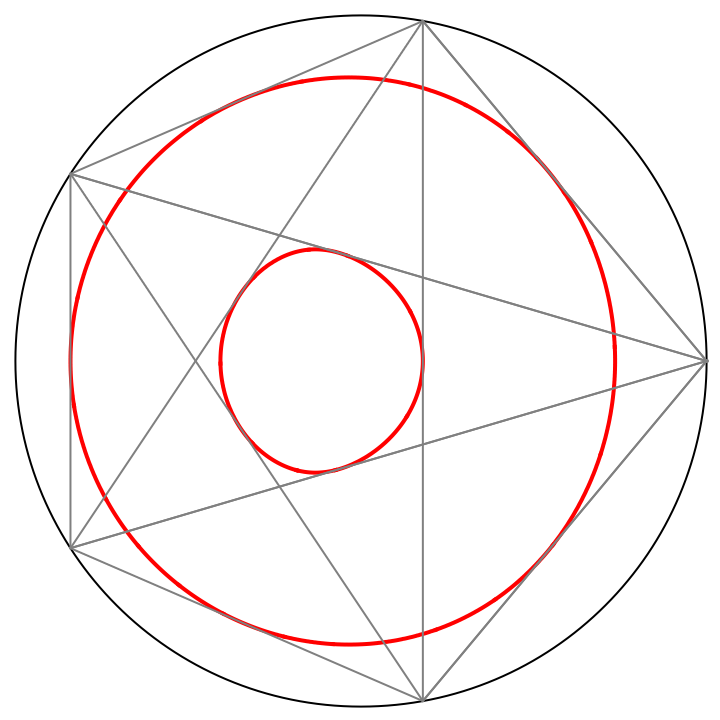} 
				\put(47,120){\small $C_1$}
				\put(50,67){\small $C_2$}	
			\end{overpic}
				& 	& \begin{overpic}[width=0.35\linewidth]{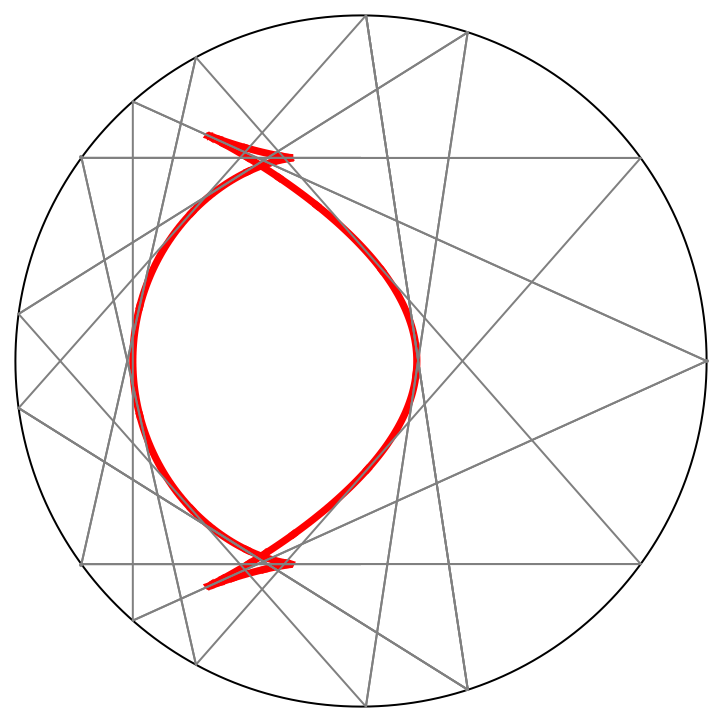} 
					\put(33,85){\small $C$}
				\end{overpic}
		\end{tabular}
	\end{center}
	\caption{Left: two convex $5$-Poncelet curves: for $C_1$ all its Poncelet polygons $\mathscr{P}(z)$ are convex, while for $C_2$ they are not. Right: a non-convex $3$-Poncelet curve.}
	\label{fig:convexity}
\end{figure} 
\end{enumerate}
\end{remark}

Given a family of positively oriented $n$-Poncelet polygons $\{ \mathscr{P}(z):\, z\in \bbT\}$ in the sense of Definition~\ref{defPonc}, we define the map $\tau: \T \rightarrow \T$ as follows: if $[z,w]$ is the edge of the polygon $\mathscr{P}(z)$ emanating from $z\in \bbT$ when traversed in the positive direction, then $\tau(z)=w\in \bbT$. The sequence $\{\tau^j(z)\}_{j=1}^\infty$ is periodic with period $n$ (that is, $n$ is the smallest positive integer such that $\tau^n = \text{Id}$, where $\text{Id}$   is the identity operator), and the positively oriented $n$-Poncelet polygons $\{ \mathscr{P}(z):\, z\in \bbT\}$ can be written as
\begin{equation} \label{eq: Ponclet polygons}
\mathscr{P}(z) = [z,\tau(z)]\cup  [\tau(z),\tau^2(z)] \cup \cdots \cup  [\tau^{n-1}(z),z].
\end{equation}

The  map $\tau: \T \rightarrow \T$, also known as the \textit{Poncelet correspondence}, see \cite{MR2465164} (and  in some particular cases is related to the John mapping, see \cite{Burskii:2013cg}), provides a connection between Poncelet curves and discrete dynamical systems and ellipsoidal billiards, see e.g. the work of Dragović and collaborators \cite{Dragovic:2019iy, Andrews:2019ty} and Schwarz \cite{Ovsienko:2010kj, Schwartz:2015cn}.  Notice that in the construction of $\tau$ we could start alternatively from an $n$-Poncelet curve $C$ and use its family of $n$-Poncelet polygons $\{ \mathscr{P}(z):\, z\in \bbT\}$ to define $\tau$. We develop this idea further in the next section, with the notion of associated Poncelet curves.

\subsubsection{Associated Poncelet curves} \label{def:assocPoncelet} 
\

Let $\{ \mathscr{P}(z):\, z\in \bbT\}$ be a family of $n$-Poncelet polygons  and $\tau: \T \rightarrow \T$ the Poncelet correspondence defined above. We will also assume that  $\tau$ is smooth  and 
\begin{equation}\label{eq: monotonicity assumption}
 \frac{d}{d\theta} \operatorname{arg} \tau(e^{i\theta})>0\ \mbox{for all $\theta \in \R$},
\end{equation}
where  the argument of $\tau(e^{i\theta})$ is viewed as a smooth  $\R$-valued function of $\theta \in \R$. For instance, the condition~\eqref{eq: monotonicity assumption} automatically holds if  the family $\{ \mathscr{P}(z):\, z\in \bbT\}$  corresponds to a convex and nonsingular $n$-Poncelet curve $C\subset \D$.

For $k\in \N$, we  define a curve $C_k \subset \bbD$ as the envelope of all   chords $[z ,\tau^k(z)]$, where $z\in \bbT$. A priori, it is not evident that 
this definition makes sense. For example, we really should consider the envelope of all  lines  determined by the chords $[z ,\tau^k(z)]$ since 
it is not clear that the points of tangency must lie on these chords. This is rather subtle and we will see later (see Theorem~\ref{lemma:inside}) that it is precisely the monotonicity condition \eqref{eq: monotonicity assumption} which  implies this.
 
We can make this more precise (and constructive) as follows. Define a map $\zeta_k: \bbT \rightarrow \P^2(\R)$ by
\begin{equation}
\zeta_k(z):= \frac{2 z \,	\tau^{k}(z)}{z+ 	\tau^{k}(z)} =
\mbox{pole of the line containing $[z,	\tau^{k}(z)]$,}  
\end{equation}
see \eqref{correspUandVb}. By construction, if $z\neq \tau^k(z)$ then $\zeta_k(z)$ lies outside of the unit disk. Since $\tau^n = \text{Id}$, we only need to consider $1\leq k\leq n-1$.  Notice that $\zeta_2$ is related to the pentagram map, see e.g.~\cite{Schwartz:1992, Schwartz:2015cn}.

\begin{lem}\label{lemma:immersion}
	For each $1\leq k\leq n-1$, the map $\zeta_k: \bbT \rightarrow \P^2(\R)$  is a smooth immersion, and the image $\zeta_k(\bbT)$ is   nonsingular.
\end{lem}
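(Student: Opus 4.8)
The plan is to analyze the map $\zeta_k$ directly in coordinates via the explicit formula $\zeta_k(z) = 2z\tau^k(z)/(z+\tau^k(z))$, using the monotonicity hypothesis~\eqref{eq: monotonicity assumption} as the crucial ingredient. Write $z = e^{i\theta}$ and $\tau^k(e^{i\theta}) = e^{i\psi_k(\theta)}$, where, by iterating~\eqref{eq: monotonicity assumption}, $\psi_k$ is a smooth strictly increasing function of $\theta$ with $\psi_k(\theta+2\pi) = \psi_k(\theta) + 2\pi k$. The first observation is that $\zeta_k$ is well-defined into $\P^2(\R)$: the denominator $z+\tau^k(z)$ vanishes precisely when $\tau^k(z) = -z$, and at such a point the formula still makes sense as the point at infinity in the direction perpendicular to the bisector, i.e. the pole of the line through $z$ and $-z$ (a diameter), which is a genuine point of $\P^2(\R)$. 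Since $\tau^k(z)\neq z$ for $1\leq k\leq n-1$ (as $n$ is the exact period of $\tau$), the chord $[z,\tau^k(z)]$ is always a genuine secant of $\T$, so $\zeta_k(z)$ always lies outside $\overline{\bbD}$ or on $\ell_\infty$; in particular $\zeta_k$ is smooth as a map $\bbT\to\P^2(\R)$, working in an affine chart near points where the denominator vanishes.

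Next I would prove $\zeta_k$ is an immersion, i.e. $\zeta_k'(\theta)\neq 0$ for all $\theta$. The cleanest way is to use the dual picture: the chord through $z=e^{i\theta}$ and $\tau^k(z)=e^{i\psi_k(\theta)}$ has, by~\eqref{correspUandVb}--\eqref{eq: elementary symmetric and zeta}, pole $\zeta_k$ determined by $z_1+z_2 = 2/\overline{\zeta_k}$ and $z_1 z_2 = \zeta_k/\overline{\zeta_k}$ with $\{z_1,z_2\} = \{e^{i\theta}, e^{i\psi_k(\theta)}\}$. Thus $\overline{\zeta_k} = (z_1+z_2)/(z_1 z_2) \cdot$ (consistent factor), and a short computation expresses $\zeta_k$ in terms of $\theta$ and $\psi_k(\theta)$; differentiating and using that both $1$ and $\psi_k'(\theta)$ are positive shows the derivative cannot vanish. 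Concretely, $\zeta_k$ moving along $\bbT$ traces out a curve whose tangent line is the chord line itself (this is the defining property of the envelope/dual), and the chord line genuinely rotates because both endpoints move monotonically in the same direction around $\T$ at strictly positive speeds $1$ and $\psi_k'(\theta)>0$; hence the chord is never momentarily stationary, forcing $\zeta_k' \neq 0$. I expect the main technical obstacle to be exactly here: one must rule out $\zeta_k'(\theta) = 0$ including at the degenerate instants where the chord is a diameter (denominator zero) — there the argument is most naturally phrased via the line coordinates $(u_1:u_2:u_3)$ rather than the affine pole, and one checks that the $1$-form $du_1\wedge\text{(stuff)}$ governing the tangent direction of $\zeta_k(\bbT)$ in $\P^2(\R)^*$ is nonzero because it is proportional to $\psi_k'(\theta) - (-1) = 1+\psi_k'(\theta) > 0$ (or a similar strictly-signed expression coming from the two endpoints winding the same way).

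Finally, for nonsingularity of the image $\zeta_k(\bbT)$: since $\zeta_k(\bbT)$ is by construction contained in $\Gamma_k^*(\R)$ for the dual $\Gamma_k$ of the envelope $C_k$, and $\zeta_k$ is a smooth immersion of the compact connected $1$-manifold $\bbT$, its image is a compact immersed closed curve lying on a real algebraic curve; a singular point of the ambient algebraic curve would be an isolated point of $\Gamma_k^*(\R)$ (singular points being finite in number, hence isolated in the real locus as noted after~\eqref{def: tangent line}), and an isolated point cannot lie on the image of the immersion $\zeta_k$ unless $\zeta_k$ is constant near there, contradicting the immersion property. So every point of $\zeta_k(\bbT)$ is a nonsingular point of the algebraic curve on which it lies, which is the assertion. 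The one subtlety to address is that $\zeta_k(\bbT)$ might a priori be a proper sub-curve sitting inside a reducible $\Gamma_k^*$; but the immersion + compactness argument is local at each point of the image and does not require irreducibility, so it still applies verbatim.
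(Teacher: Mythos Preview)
Your approach to the immersion claim is essentially the paper's, just fleshed out: the paper asserts in one line that strict monotonicity of $\arg\tau^k(e^{i\theta})$ forces the differential of $\zeta_k$ to be nonzero everywhere, without any computation, whereas you sketch why (and your guess that the relevant strictly positive quantity is of the form $1+\psi_k'(\theta)$ is exactly what appears in the proof of Theorem~\ref{lemma:inside}). Your discussion of well-definedness at the diameter points $z+\tau^k(z)=0$ is a useful addition the paper omits.

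Your nonsingularity argument, however, contains a genuine slip. You write that a singular point of the ambient algebraic curve ``would be an isolated point of $\Gamma_k^*(\R)$,'' invoking the paper's remark that singular points are finite in number. But finiteness only means the singular points are isolated \emph{from one another}, not isolated in the real locus: a node, for example, is a singular point lying at the transverse crossing of two real branches and is anything but isolated in $\Gamma_k^*(\R)$. So nothing prevents the image of an immersion from passing through---or even creating---a node of the ambient curve; indeed, an immersed circle that is not injective typically has nodes in its image. The paper's argument is different and shorter: an immersion is a local embedding, so near each parameter value the image is a smooth embedded arc. That is the sense of ``nonsingular'' intended here (no cusps; the image is locally a smooth $1$-manifold). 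Whether the image is globally free of self-intersections is handled separately in the lemma immediately following, which shows $\zeta_k$ is one-to-one (two-to-one when $n=2k$).
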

\begin{proof}
 By our assumptions,  the map $\tau:\T \rightarrow \T$ is smooth so that each of the maps
 $\zeta_k: \bbT \rightarrow \P^2(\R)$  is also smooth, and the argument of $\tau(e^{i\theta})$ 
 is a strictly increasing smooth function  when viewed as a continuous $\R$-valued function of $\theta \in \R$. 
 It then also follows that the argument of $\tau^k(e^{i\theta})$ is a strictly increasing   function of $\theta \in \R$. 
 As a consequence, the differential of the map $\zeta_k: \bbT \rightarrow \P^2(\R)$ is  nonzero everywhere
 and hence $\zeta_k$ is a smooth immersion.  Since an immersion is a local embedding, it follows that $\zeta_k(\bbT)$ is  nonsingular.
\end{proof}

\begin{definition}
For each $1\leq k\leq n-1$, the curve  $C_k$ is the dual  of the nonsingular curve $\zeta_k(\bbT)$.  
\end{definition}
Notice  that $C_k$ may have  singularities. As usual, cusps of $C_k$ correspond to tangent lines at inflection points of  $\zeta_k(\bbT)$
and double points of $C_k$ correspond to  lines that are tangent at two distinct points of  $\zeta_k(\bbT)$, see e.g.\ Figure~\ref{fig:Dual}.

\begin{thm}\label{lemma:inside}
For each $1\leq k\leq n-1$, assumption \eqref{eq: monotonicity assumption} implies that $C_k\subset \D$. 
If 
$$
\frac{d}{d\theta} \operatorname{arg} \tau(e^{i\theta})\geq 0\ \mbox{for all $\theta \in \R$}
$$
then $C_k\subset \overline{\D}$ and can have points on $\T$ (see Figure~\ref{fig:cusponT}). 

Finally, if  at some $\theta \in \R$, 
$$
\frac{d}{d\theta} \operatorname{arg} \tau(e^{i\theta})< 0
$$
then $C_k$ has points outside $ \overline{\D}$.
\end{thm}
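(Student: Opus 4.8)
The plan is to identify $C_k$ with the envelope of the one‑parameter family of chord‑lines $\ell_k(z)$ determined by $z$ and $\tau^k(z)$, compute that envelope explicitly in an affine chart, and read off the distance of its points from the origin. Since the pole of the line containing $[z,\tau^k(z)]$ is exactly $\zeta_k(z)$, and reciprocation about $\bbT$ is an incidence‑reversing involution carrying $\zeta_k(\bbT)$ onto its dual, the real points of $C_k=\bigl(\zeta_k(\bbT)\bigr)^*$ are precisely the characteristic points of $\{\ell_k(z):z\in\bbT\}$. Writing $z=e^{i\theta}$, letting $\psi(\theta)$ be a smooth lift of $\arg\tau^k(e^{i\theta})$, and setting $m=\tfrac12(\theta+\psi)$, $h=\tfrac12(\theta-\psi)$, formula \eqref{eq: line given by two points on T} gives $\ell_k(\theta)$ in the real form
\[
F(x,y,\theta):=x\cos m(\theta)+y\sin m(\theta)-\cos h(\theta)=0 .
\]

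\emph{The key computation.} I would then solve the envelope system $F=0$, $\partial_\theta F=0$ for the point $p(\theta)=(x(\theta),y(\theta))\in C_k$. Since $m'=\tfrac12(1+\psi')$ and $h'=\tfrac12(1-\psi')$, wherever $m'(\theta)\ne0$ one gets
\[
x=\cos h\,\cos m+\tfrac{h'\sin h}{m'}\sin m,\qquad y=\cos h\,\sin m-\tfrac{h'\sin h}{m'}\cos m ,
\]
so that $|p(\theta)|^2=\cos^2 h+\dfrac{(h')^2}{(m')^2}\sin^2 h$, and therefore, using $(h')^2-(m')^2=-\psi'$,
\[
|p(\theta)|^2-1=-\,\frac{\psi'(\theta)\,\sin^2 h(\theta)}{m'(\theta)^2}\,,\qquad m'(\theta)=\tfrac12\bigl(1+\psi'(\theta)\bigr).
\]
Because $e^{i\theta}\ne\tau^k(e^{i\theta})$ for every $\theta$ (recall $1\le k\le n-1$), we have $\sin^2 h(\theta)>0$, so the sign of $|p(\theta)|^2-1$ is the sign of $-\psi'(\theta)$.

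\emph{The three cases.} By the chain rule, exactly as in the proof of Lemma~\ref{lemma:immersion}, one has $\psi'(\theta)=\prod_{j=0}^{k-1}\bigl(\tfrac{d}{d\vartheta}\arg\tau\bigr)\bigl(\arg\tau^{\,j}(e^{i\theta})\bigr)$. If $\tfrac{d}{d\theta}\arg\tau>0$ throughout, then $\psi'>0$ and $m'\ge\tfrac12>0$ throughout, hence $|p(\theta)|<1$ for all $\theta$; since the points $p(\theta)$ exhaust $C_k$, this gives $C_k\subset\D$. If $\tfrac{d}{d\theta}\arg\tau\ge0$ throughout, then $\psi'\ge0$ and still $m'\ge\tfrac12>0$, so $|p(\theta)|\le1$, i.e. $C_k\subset\overline{\D}$, with $|p(\theta)|=1$ precisely where $\psi'(\theta)=0$ — which happens, e.g., whenever $\tfrac{d}{d\theta}\arg\tau$ vanishes at $e^{i\theta}$ — so $C_k$ can touch $\bbT$. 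Finally, if $\tfrac{d}{d\theta}\arg\tau<0$ at some point, then $\psi'$ is negative on a nonempty open set $U$; on $U$, away from the points where $\psi'=-1$, we have $|p(\theta)|>1$, so $C_k$ has points outside $\overline{\D}$ (at a point where $\psi'=-1$ the envelope system has no finite solution, i.e. $\ell_k$ meets $C_k$ only at infinity, which only reinforces the conclusion).

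\emph{Main obstacle.} The delicate point is the last case when $k\ge2$: one must show $\tfrac{d}{d\theta}\arg\tau^k$ takes a negative value given only that $\tfrac{d}{d\theta}\arg\tau$ does, which amounts to choosing $\theta$ so that an odd number of the factors in the product above are negative — a short dynamical bookkeeping along a $\tau$‑orbit (for $k=1$ it is immediate). I would also spend a line making sure $\theta\mapsto p(\theta)$ really traces out all of $C_k$ (this is precisely the identification of the dual curve with the envelope of polar lines), and noting that the displayed identity simultaneously settles the subtlety flagged before the statement: in the monotone case the tangency point $p(\theta)$ lies on the \emph{open} chord $(e^{i\theta},\tau^k(e^{i\theta}))$, since $\ell_k(\theta)\cap\D$ is exactly that open segment.
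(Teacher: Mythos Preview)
Your argument is correct and the core identity
\[
|p(\theta)|^2-1=-\,\frac{\psi'(\theta)\,\sin^2 h(\theta)}{m'(\theta)^2}
\]
is a clean way to settle all three cases at once. The paper takes a related but computationally different route: it works on the dual side, parametrizing the tangent line to $\zeta_1(\bbT)$ at $\zeta_1(e^{i\theta})$ in complex coordinates as $L(t)=\zeta_1(1+it\,\tfrac{w+\dot w z}{w+z})$, then minimizes $|L(t)|^2$ over $t$ to obtain the squared distance $(1+\dot w)^2/|z+\dot w w|^2$ from the origin, and finishes with the triangle inequality. Since reciprocation gives $d(0,\text{tangent})\cdot|p|=1$, the two formulas are reciprocals of one another; your real-variable envelope computation simply bypasses the minimization and the triangle-inequality step, so the sign is visible without any estimate.

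On the reduction to $k=1$: the paper dispatches this with ``obviously it is sufficient to prove the statement for $k=1$'', which is indeed immediate for the first two cases by the chain-rule product you wrote down. You are right to flag that the third case for $k\ge2$ needs a word---one must know that $\tfrac{d}{d\theta}\arg\tau<0$ somewhere forces $\tfrac{d}{d\theta}\arg\tau^k<0$ somewhere---and the paper does not supply that word either. So your treatment is, if anything, more scrupulous on this point; the ``short dynamical bookkeeping'' you allude to is what both proofs owe the reader here.
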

\begin{proof}
Obviously, it is sufficient to prove the statement for $k=1$. 

The curve  $C_1$ is obtained from the tangent lines to  curve $\zeta_1(\bbT)$ via reciprocation in $\bbT$. In particular, $C_1$ contains a point outside of $\overline{\D}$ if and only if there is a tangent line to $\zeta_1(\bbT)$  that intersects $\T$ in two distinct points (or alternatively, whose distance to the origin is $<1$).  

In order to simplify notation, denote
$$
w=\tau(z), \quad w' = \frac{d}{dz}  \tau(z), \quad \dot w = \frac{d}{d\theta} \operatorname{arg} \tau(e^{i\theta}).
$$
Since $w\in \bbT$, $\log w = i\arg w$, we get that
$$
\dot w = \frac{z w'}{w}.
$$
Thus, differentiating $\zeta_1(z)=  2 z 	w /(z+w)$ we get that
$$
  \frac{d}{d\theta} \zeta_1  = i z \, \zeta_1'(z)= i  \zeta_1 \,  \frac{w + \dot w z}{w+z} , \quad z=e^{i\theta}, \quad \zeta_1=\zeta_1\left(z \right).
$$
Hence, the parametric equation of the straight line tangent to $\zeta_1(\bbT)$ at the point $\zeta_1(e^{i\theta})$ is 
$$
L(t)= \zeta_1 + it \,  \zeta_1 \,  \frac{w + \dot w z}{w+z} =  \frac{2 z 	w}{z+w} \left(1 + it \,   \frac{w + \dot w z}{w+z}   \right) , \quad t\in \R.
$$
Notice that $\dot w\in \R$, $z, w\in \bbT$, so that 
$$
\overline{L(t)}=  \frac{2  }{z+w} \left(1 - it \,   \frac{z + \dot w w}{w+z}   \right) ,
$$
and hence,
\begin{align*}
|L(t)|^2 & =  \frac{4 z 	w}{(z+w)^2} \left(1 + it \,   \frac{w + \dot w z}{w+z}   \right) \left(1 - it \,   \frac{z + \dot w w}{w+z}   \right)  = \frac{4 w z }{(w+z)^4} \,  ( \alpha t^2 + \beta t +\gamma ), 
\end{align*}
with
$$
\alpha = (w + \dot w z)(z + \dot w w) , \quad \beta = i (w^2-z^2)(1-\dot w), \quad \gamma = (z+w)^2.
$$
This is a quadratic function in $t$ whose minimum is attained at $t=-\beta/(2\alpha)$. Replacing it in the expression for $|L(t)|^2$ we get that the square of the distance of the tangent line to the origin is 
$$
\frac{z w (1+ \dot w)^2  }{ (w + \dot w z)(z + \dot w w) } = \frac{ (1+ \dot w)^2  }{  |z + \dot w w|^2  }. 
$$
If $\dot w\geq  0$ then by the triangle inequality, 
$$
 |z + \dot w w|\leq  |z| + |\dot w w|\ = 1 + \dot w,
$$
which shows that the distance is $\ge 1$. Moreover, equality holds only when either $z=w$  or when $\dot w=0$. In the same vein, the reversed triangle inequality and the assumption $\dot w<0$ yields that the distance is $<1$, so that the tangent line intersects the unit circle in two points.
\end{proof}

\begin{figure}[htb] 
	\begin{center}
		\begin{tabular}{c c c}		
			\includegraphics[width=0.25\linewidth]{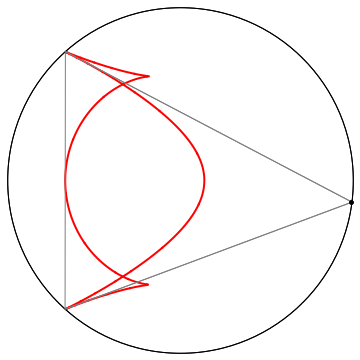} & \includegraphics[width=0.25\linewidth]{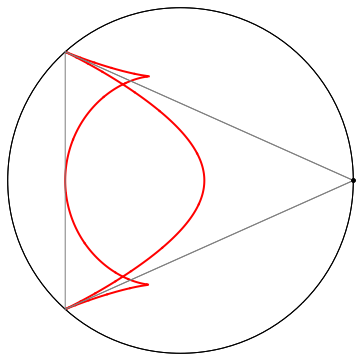} 	& \includegraphics[width=0.25\linewidth]{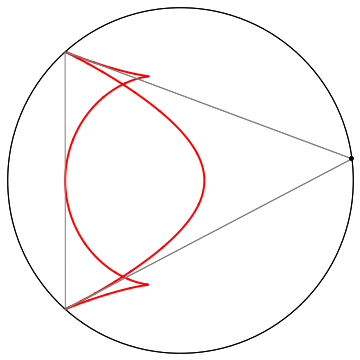} 
		\end{tabular}
	\end{center}
	\caption{A family of $3$-Poncelet polygons such that the curve $C_1$ has a cusp on $\T$. The highlighted point on $\T$ 
	corresponds to $e^{i\theta}$ for the values $\theta=-\pi/25$, $0$, and $\pi/25$, respectively. The function $\operatorname{arg} \tau(e^{i\theta})$ is strictly increasing but has a stationary point at $\theta=0$; $C_1$ intersects $\bbT$.}
	\label{fig:cusponT}
\end{figure}

\begin{lem}
For each $1\leq k\leq n-1$, the map $\zeta_k: \bbT \rightarrow \P^2(\R)$  is one-to-one, unless  $n=2k$, in which case  $\zeta_k$ is two-to-one. Additionally, $\zeta_k(\bbT)=\zeta_{n-k}(\bbT)$.
\end{lem}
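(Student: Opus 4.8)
The plan is to reduce the lemma to a single structural fact about the fibres of $\zeta_k$: for $z,w\in\bbT$ one has $\zeta_k(z)=\zeta_k(w)$ if and only if the unordered pairs $\{z,\tau^k(z)\}$ and $\{w,\tau^k(w)\}$ coincide. To establish this, recall that by construction $\zeta_k(z)$ is the pole, with respect to $\bbT$, of the line $\ell_z$ containing the chord $[z,\tau^k(z)]$ (see \eqref{correspUandVb}), and that reciprocation about $\bbT$ is a bijection between points and lines of $\P^2(\R)$; hence $\zeta_k(z)=\zeta_k(w)$ exactly when $\ell_z=\ell_w$, and since a line meeting $\bbT$ in two distinct points meets it in exactly those two, one has $\ell_z\cap\bbT=\{z,\tau^k(z)\}$, which gives the equivalence. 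For this to make sense I first need $z\neq\tau^k(z)$ for every $z$ and every $1\le k\le n-1$: the standing assumption \eqref{eq: monotonicity assumption} makes $\tau$ an orientation-preserving circle diffeomorphism, and by minimality of $n$ it has exact order $n$, so $\tau^k$ is a nontrivial orientation-preserving circle diffeomorphism of finite order; such a map is fixed-point free, since restricting it to the complement of a hypothetical fixed point would yield an orientation-preserving (hence increasing) self-homeomorphism of $\R$ of finite order, which must be the identity.

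Granting this, suppose $\zeta_k(z)=\zeta_k(w)$ with $z\neq w$. Since $z\in\{w,\tau^k(w)\}$ and $z\neq w$, we get $z=\tau^k(w)$, and matching the remaining elements of these two-element sets forces $\tau^k(z)=w$. Thus $\tau^{2k}(z)=z$, so $\tau^{2k}$ has a fixed point; by the fixed-point-free principle above this forces $\tau^{2k}=\mathrm{Id}$, i.e.\ $n\mid 2k$, which for $1\le k\le n-1$ is possible only if $n=2k$ (in particular only for even $n$). Conversely, if $n=2k$ then $\tau^k$ is a fixed-point-free involution, so for every $z$ the point $w:=\tau^k(z)$ is distinct from $z$ and satisfies $\{w,\tau^k(w)\}=\{\tau^k(z),z\}=\{z,\tau^k(z)\}$, whence $\zeta_k(w)=\zeta_k(z)$; and by the structural fact the full fibre of $\zeta_k$ over $\zeta_k(z)$ is exactly $\{z,\tau^k(z)\}$, a set of two elements, so $\zeta_k$ is precisely two-to-one. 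This proves the first assertion.

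For the equality $\zeta_k(\bbT)=\zeta_{n-k}(\bbT)$ (trivial when $n=2k$, since then $n-k=k$), I would argue directly: for $z\in\bbT$ set $w=\tau^{n-k}(z)$, so that $\tau^k(w)=\tau^n(z)=z$; then the chord $[z,\tau^{n-k}(z)]$ coincides with $[w,\tau^k(w)]$, hence $\zeta_{n-k}(z)=\zeta_k\!\left(\tau^{n-k}(z)\right)$. As $\tau^{n-k}$ is a bijection of $\bbT$, letting $z$ range over $\bbT$ lets $w$ range over all of $\bbT$, so $\zeta_{n-k}(\bbT)=\zeta_k(\bbT)$.

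The whole argument is bookkeeping with the Poncelet correspondence and the pole--polar dictionary of Section~\ref{sec:geometry}; the only ingredient that is not purely formal is the classical statement that a nontrivial finite-order orientation-preserving circle diffeomorphism has no fixed point, which is used twice (for well-definedness of $\zeta_k$ and to pin down the exceptional case $n=2k$), together with the small check that in that case the fibres are no larger than two points. I expect no serious obstacle beyond keeping these verifications straight.
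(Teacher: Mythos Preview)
Your argument is correct and follows essentially the same route as the paper's proof: identify the fibre of $\zeta_k$ with the unordered pair $\{z,\tau^k(z)\}$ via the pole--polar bijection, reduce a collision to $\tau^{2k}(z)=z$, and for the last assertion use $\tau^{n-k}(\tau^k(z))=z$. The only difference is one of detail: you explicitly justify that $\tau^k$ is fixed-point free (hence the chords are nondegenerate) and that a single fixed point of $\tau^{2k}$ forces $\tau^{2k}=\mathrm{Id}$, whereas the paper tacitly relies on the standing convention that each $\mathscr{P}(z)$ has exactly $n$ distinct vertices, so $\tau^j(z)=z$ for some $z$ already implies $n\mid j$.
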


\begin{proof}
The case $n=2$ is trivial, so assume $n\ge 3$. 
Let $z,w\in \bbT$ such that $\zeta_k(z)=\zeta_k(w)$. Then 
$[z,\tau^{k}(z)]=[w,\tau^{k}(w)]$ and hence either $z=w$ or $\tau^k(z)=w$ and $z=\tau^k(w)$.
The second condition  is equivalent to $\tau^{2k}(z)=z$ which is only possible if $n=2k$.
It follows that the map $\zeta_k$ is one-to-one if $n\not=2k$ and two-to-one if $n=2k$.

The last assertion follows from the fact that if $z\in \T$ and $w=\tau^k(z)$, then by definition $\zeta_k(z)$ is the pole of the line containing $[z,w]$. At the same time,
$\tau^{n-k}(w)=\tau^n(z)=z$, so that $\zeta_{n-k}(w)$ is the pole of the line containing $[w,z]$, which shows that $\zeta_{n-k}(w)=\zeta_k(z)$, with $w=\tau^k(z)$.
\end{proof}

\begin{figure}[htb] 
	\begin{center}
			\begin{overpic}[width=35pc]{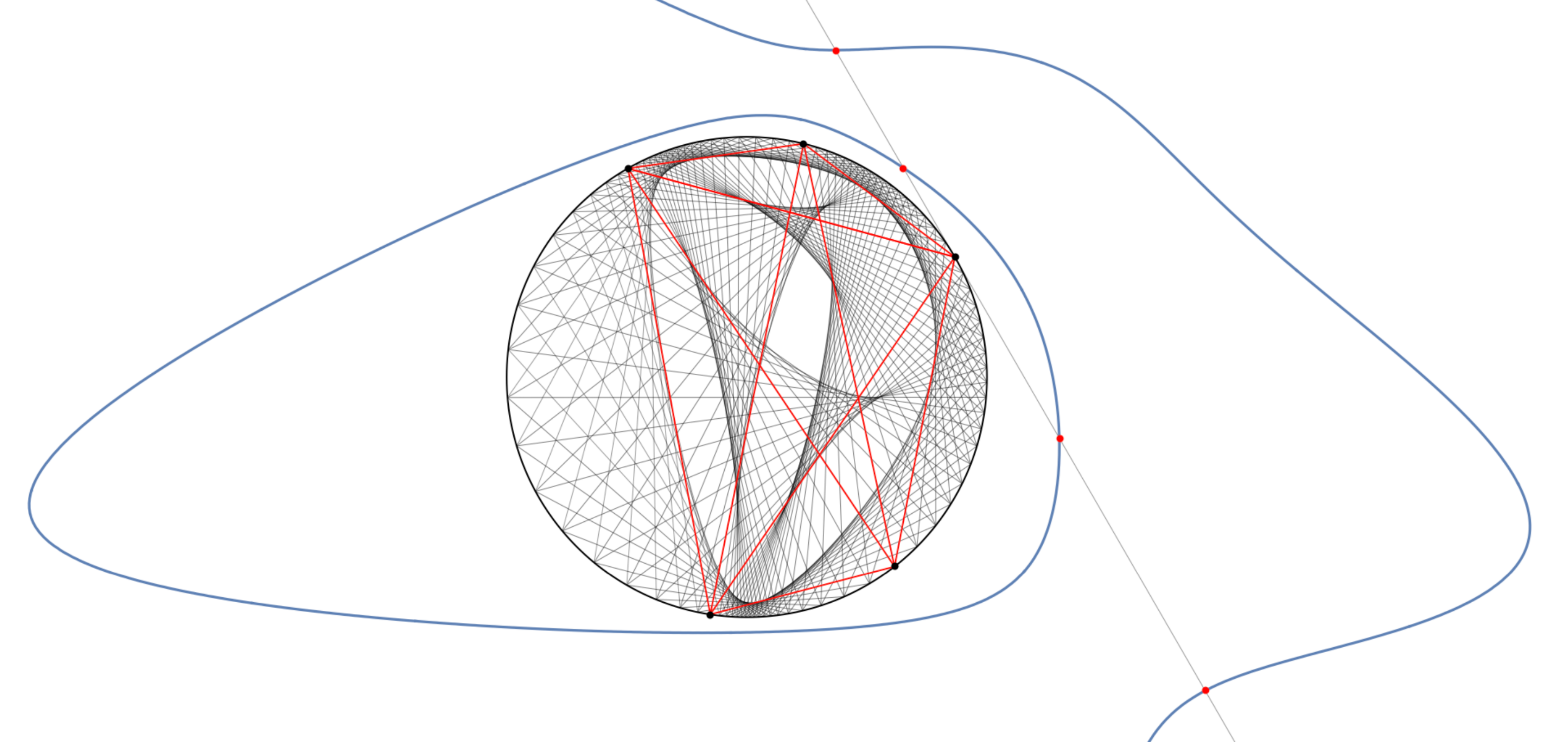}
			\put(70,20){$C_1^*$}
				\put(167,70){$C_1$}
			\put(360,125){$C_2^*$}
				\put(207,105){$C_2$}
			\put(120,85){ $\bbT$}
		\end{overpic}
	\end{center}
	\caption{The curves $C_1^*$ and $C_2^*$ associated to a convex Poncelet curve of rank $5$.}
	\label{fig:Dual}
\end{figure} 

This lemma shows that $C_{n-k}=-C_k$, and we only need to consider curves $C_k$ for $1\leq k\leq [n/2]$. It turns out that each such a component $C_k$  exhibits the Poncelet property:
 \begin{thm} \label{thm:Gauss}
Assume  that $n\geq 3$ and all $C_k$, $1\leq k\leq [n/2]$, are closed curves in the sense of Definition~\ref{def:closedcurve}. Then,  for  each $1\leq k\leq [n/2]$, the curve $C_k \subset \bbD$  is a Poncelet curve of rank  $n/\gcd(k,n)$. Moreover, if all Poncelet polygons $\mathscr{P}(z)$  are convex then  the  positively oriented Poncelet polygons  for $C_k$ have turning number $k/\gcd(k,n)$.
 
Furthermore, if $d$ is a divisor of  $n$ and $d\ge 3$, then the number of curves $C_k$, $1\leq k \le [n/2]$, that have the $d$-Poncelet property  is $\phi(d)/2$, where $\phi$ denotes Euler's totient function  (i.e., $\phi(d)$ counts the positive integers up to $d$ that are relatively prime to $d$). 
\end{thm}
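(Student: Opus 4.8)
The plan is to attach to each $C_k$ an explicit family of $m$-Poncelet polygons with $m=n/\gcd(k,n)$, verify the three conditions of Definition~\ref{defPonc} for this family, and then read off the turning-number statement and the final count from elementary properties of lifts of circle maps and of Euler's function.

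Put $g=\gcd(k,n)$ and $m=n/g$. Since $\tau$ is an orientation-preserving diffeomorphism of $\bbT$ of exact order $n$ (and an orientation-preserving finite-order circle map with a fixed point is the identity), the iterate $\tau^k$ has exact order $m$ and every point of $\bbT$ has $\tau^{k}$-period exactly $m$; hence for each $z\in\bbT$ the chain
\[
\mathscr{Q}_k(z):=[z,\tau^{k}(z)]\cup[\tau^{k}(z),\tau^{2k}(z)]\cup\cdots\cup[\tau^{(m-1)k}(z),z]
\]
is an $m$-gon inscribed in $\bbT$ (a degenerate two-vertex polygon in the sense of \S\ref{def: polygonal chain} when $m=2$), and $\{\mathscr{Q}_k(z):z\in\bbT\}$ is a family of $m$-Poncelet polygons. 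Now $C_k$ was defined as the dual of $\zeta_k(\bbT)$, and by the reciprocal description of $\zeta_k$ the tangent lines of $C_k$ are exactly the lines carrying the chords $[w,\tau^{k}(w)]$, $w\in\bbT$; moreover Theorem~\ref{lemma:inside} gives $C_k\subset\D$, so any point of $C_k$ lying on such a line lies on the open chord. This immediately yields condition (iii): a tangent line of $C_k$ at a point $\zeta\in C_k$ — or, at a singular point, the tangent of the local branch carrying $C_k$, which exists by Definition~\ref{def:closedcurve} — is one of these chords and $\zeta$ is interior to it, hence to a side of $\mathscr{Q}_k(w)$ for a suitable $w$; it also gives the tangency half of (i). For the remaining parts of (i) and for (ii), the key observation is that a line meeting $\bbT$ in two points meets it in no third point, so a tangent line of $C_k$ through a given $z\in\bbT$ is a chord $[w,\tau^{k}(w)]$ with $z\in\{w,\tau^{k}(w)\}$, hence equals $[z,\tau^{k}(z)]$ or $[\tau^{(m-1)k}(z),z]$ — precisely the two sides of $\mathscr{Q}_k(z)$ through $z$, and there are only these two. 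This proves that $C_k$ is a Poncelet curve of rank $m=n/\gcd(k,n)$. I would also record here that rank is a curve invariant: if a curve carried $d$- and $d'$-Poncelet families, the two tangent lines through each $z\in\bbT$ would force the two polygons to share both edges at $z$ and hence, walking from vertex to vertex, to coincide, so $d=d'$; thus $C_k$ has the $d$-Poncelet property precisely when $d=n/\gcd(k,n)$.

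For the turning number, assume all $\mathscr{P}(z)$ convex and lift $\tau$ to a strictly increasing $F\colon\R\to\R$ with $F(\theta+2\pi)=F(\theta)+2\pi N$. Convexity of $\mathscr{P}(e^{i\theta})$ forces $\theta<F(\theta)<\cdots<F^{n-1}(\theta)<\theta+2\pi$, and this together with $\tau^n=\mathrm{Id}$ gives $F^n(\theta)=\theta+2\pi$, so $N=1$; consequently $x<F^{k}(x)<x+2\pi$ for $1\le k\le n-1$ and $F^{mk}(\theta)=F^{\operatorname{lcm}(k,n)}(\theta)=(F^n)^{k/g}(\theta)=\theta+2\pi(k/g)$. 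The successive vertices of $\mathscr{Q}_k(e^{i\theta})$ thus carry arguments $\theta,F^{k}(\theta),\dots,F^{(m-1)k}(\theta),\theta+2\pi(k/g)$, with every single step and (when $2k<n$) every double step advancing the argument by less than $2\pi$; since for such an inscribed polygon the turning number equals the total argument advance divided by $2\pi$, it equals $k/g=k/\gcd(k,n)$ (the remaining case $2k=n$, i.e.\ $m=2$, matching the convention $\pm1$). Finally, for a divisor $d\ge3$ of $n$: by the above, $C_k$ with $1\le k\le[n/2]$ has the $d$-Poncelet property iff $\gcd(k,n)=n/d$; writing $k=(n/d)j$ this becomes $\gcd(j,d)=1$, while $1\le k\le[n/2]$ becomes $1\le j\le[d/2]$, and no totative of $d$ equals $d/2$ when $d\ge3$. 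The $\phi(d)$ totatives of $d$ in $\{1,\dots,d-1\}$ split into pairs $\{j,d-j\}$ with $j\ne d-j$, so exactly half of them lie in $\{1,\dots,[d/2]\}$; the count is therefore $\phi(d)/2$, an integer since $\phi(d)$ is even for $d\ge3$.

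The step I expect to be the main obstacle is the careful verification of (i)--(iii): ruling out spurious tangent lines of $C_k$ issuing from points of $\bbT$, showing that the tangency points actually sit on the chords (which is exactly where the monotonicity hypothesis is used, through Theorem~\ref{lemma:inside}), and correctly invoking the local-branch clause of Definition~\ref{def:closedcurve} at the singular points of $C_k$. Once the Poncelet family of $C_k$ has been pinned down, the turning-number computation and the counting are routine.
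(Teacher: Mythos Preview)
Your proposal is correct and follows essentially the same strategy as the paper: you build the same polygons $\mathscr{Q}_k(z)=\mathscr{P}_k(z)$, identify the rank via the order of $k$ in $\Z_n$, compute the turning number from the total argument advance, and count via the pairing $j\leftrightarrow d-j$ of totatives. Your version is in fact more complete than the paper's: the paper simply asserts that $C_k$ is an $n_k$-Poncelet curve without checking conditions (i)--(iii) of Definition~\ref{defPonc}, whereas you supply that verification (using duality, Theorem~\ref{lemma:inside}, and the ``two points on $\bbT$'' observation), and you also make explicit the rank-invariance step needed to turn the divisor count into an exact count.
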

Recall that here we assume that condition \eqref{eq: monotonicity assumption} holds.
\begin{proof}

For the following, set  $\Z_n:=\{0,1,2,\ldots, n-1\}$ and view $\Z_n$ as an additive group, where addition is the usual addition of integers modulo $n$.
It is an elementary result in basic group theory that the order of $k\in \Z_n$ is equal to $n_k:=n/\gcd(k,n)$.
Here the order of   $k\in \Z_n$ is defined as the smallest positive integer $m$
such that
$$
mk:=\underbrace{k+\cdots+k}_{m}=0\ \mbox{in $\Z_n$}.
$$
For $ k\in \N$ and $z\in \bbT$,  define a polygon with $n_k$ sides by 
\begin{equation} \label{defPoncPol}
\mathscr{P}_k(z) :=  [z,\tau^k (z)]\cup  [\tau^k(z),\tau^{2k}(z)] \cup \cdots \cup  [\tau^{(n_k-1)k}(z), z].
\end{equation}
Since $\tau^n(z)=z$ we can view the exponents in the squence $z$, $\tau^k(z)$, $\tau^{2k}(z)$, etc., as elements of $\Z$ or as elements of $\Z_n$.
 
Note that each one of the segments is of the form $[w,\tau^k(w)]$ for some $w\in \bbT$. In fact, for any $0\leq m\leq n_k-1$, we have
$$
[\tau^{mk}(z),\tau^{(m+1)k}(z)] = [w,\tau^k(w)],\  \text{where $w=\tau^{mk}(z)$}.
$$ 
Thus, $C_k$ is an $n_k$-Poncelet curve with Poncelet polygons $\mathscr{P}_k(z)$.

Moreover, if all Poncelet polygons $\mathscr{P}(z)$  are convex, then for  each $z\in \bbT$, the vertices $z,\tau(z),\ldots , \tau^{n-1}(z)$ of $\mathscr{P}(z)$ are points on $\bbT$ in counterclockwise order.
Since $n_k=n/\gcd(k,n)=\min\{m \in \N :  \mbox{$mk$ is a positive multiple of $n$}\}$ and
$$
\frac{n}{\gcd(k,n)}\, k = \frac{k}{\gcd(k,n)}\, n,
$$
it follows that the turning number of $\mathscr{P}_k(z)$ is $k/\gcd(k,n)$.

A consequence of the just established fact is that a polygon $\mathscr{P}_k(z)$ in \eqref{defPoncPol}
has exactly $n$ segments if and only if $\gcd(n,k)=1$. Thus, the total number of such polygons is precisely the number of integers $1\leq k\leq [n/2]$ with $\gcd(n,k)=1$, which coincides with $\phi(n)/2$. More generally,  suppose $d$ is a divisor of $n$,  $d\ge 2$. A well known fact is that  the number of integers $1\leq k\leq [n/2]$ with $\gcd(n,k)=n/d$ is equal to $\phi(d)/2$, which proves the theorem. 
\end{proof}

\subsubsection{Complete Poncelet curves} \label{def: CompleteP}
\

Let $\{ \mathscr{P}(z):\, z\in \bbT\}$ be a family of \textit{convex} $n$-Poncelet polygons  such that the Poncelet correspondence  $\tau: \T \rightarrow \T$ is smooth and  condition \eqref{eq: monotonicity assumption} holds.
Consistent with the hypotheses of Theorem~\ref{thm:Gauss}, we assume that each of the associated Poncelet curves $C_k$,  $1\leq k\leq  [n/2]$, as constructed in Section~\ref{def:assocPoncelet}, is closed (and thus, algebraic) in the sense of Definition~\ref{def:closedcurve}. Moreover, by Theorem \ref{lemma:inside}, all $C_k\subset \bbD$.

\begin{definition}\label{def:Package}
Under the assumptions above, the union
\begin{equation}\label{eq:package}
\mathcal K_n := \bigcup_{k=1}^{[n/2]} C_k  \subset \bbD
\end{equation}
is  called a \textit{package of Poncelet curves generated by} the family $\{ \mathscr{P}(z):\, z\in \bbT\}$. If $\{ \mathscr{P}(z):\, z\in \bbT\}$ are the convex Poncelet polygons for a closed curve $C\subset \bbD$, we alternatively say that the package of Poncelet curves is \textit{generated by $C$}; in this case, $C_1=C$. 
\end{definition}
This terminology was apparently introduced by Mirman, see  \cite{Mirman:2003ho}. 

Recall that by assumption, every package of Poncelet curves is algebraic, so that there exists a real real algebraic curve $\Gamma$ such $\mathcal K_n\subset \Gamma(\R)$.
\begin{lem} \label{lem:class}
Let $\Gamma$ be any real algebraic curve  such that  
\begin{equation}\label{eq: complete Poncelet}
\mathcal K_n=\bigcup_{k=1}^{[n/2]} C_k  \subseteq \Gamma(\R).
\end{equation}
Then the class of $\Gamma$ is at least $n-1$. 

If the class of $\Gamma$ is exactly $n-1$, then  (\ref{eq: complete Poncelet}) becomes
\begin{equation}\label{Cunion}
\mathcal K_n=	\bigcup_{k=1}^{[n/2]} C_k  = \Gamma(\R).
\end{equation}
\end{lem}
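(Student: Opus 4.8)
The plan is to push everything through Mirman's parametrization. Write $m$ for the class of $\Gamma$, let $G(u_1,u_2,u_3)=0$ be the square-free equation of $\Gamma^*$, and let $P(z_1,z_2)$ be the symmetric polynomial of total degree $2m$ attached to it by~\eqref{MirmanSubstitution}; a direct inspection of~\eqref{MirmanSubstitution} shows that $P$ has degree at most $m$ in each of its two variables separately. Fix $z\in\bbT$. Since the polygons $\mathscr{P}(z)$ are convex, the points $z,\tau(z),\dots,\tau^{n-1}(z)$ are pairwise distinct, so the chords $[z,\tau^{k}(z)]$, $k=1,\dots,n-1$, are $n-1$ distinct lines; by the definition of $C_k$ as the dual of $\zeta_k(\bbT)$, each is a tangent line of $C_k\subseteq\Gamma(\R)$, hence a tangent line of $\Gamma$, and its pole $\zeta_k(z)$ lies outside $\bbT$ by Theorem~\ref{lemma:inside}. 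Thus $\zeta_k(z)\in\Gamma^*(\R)$, and by Mirman's parametrization $P(z,\tau^{k}(z))=0$. So $w\mapsto P(z,w)$ has at least $n-1$ distinct zeros; it is not identically zero for all but finitely many $z\in\bbT$ (since $G\ne 0$), and it has degree at most $m$, whence $m\ge n-1$.

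\textbf{The equality case.} Assume now $m=n-1$. Then for all but finitely many $z\in\bbT$ the polynomial $w\mapsto P(z,w)$ has degree at most $n-1$ and at least $n-1$ distinct zeros, hence it has degree exactly $n-1$ and its zeros are exactly $\tau^{1}(z),\dots,\tau^{n-1}(z)$, all simple. Equivalently, for generic $z\in\bbT$ the only chords of $\bbT$ through $z$ that are tangent lines of $\Gamma$ are the $[z,\tau^{k}(z)]$; in dual language, $\Gamma^*(\R)\cap L_z=\{\zeta_1(z),\dots,\zeta_{n-1}(z)\}$, where $L_z$ is the line in the dual plane consisting of the lines through $z$. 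Since $\bigcup_{z\in\bbT}L_z$ is exactly the complement of $\bbD$ in the dual plane, and each non-generic $z$ contributes only finitely many points of $\Gamma^*$, it follows that, outside $\overline{\bbD}$, the set $\Gamma^*(\R)$ coincides with $\bigcup_{k}\zeta_k(\bbT)$ up to finitely many points.

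\textbf{No superfluous components, and the interior.} Next I would show $\Gamma$ has no superfluous components. Let $\Gamma_1$ be the union of those irreducible components of $\Gamma$ whose real locus contains a nondegenerate arc of some $C_k$ (the case of a $C_k$ that is a single point being treated directly). Each $C_k$ is a connected closed curve with no isolated points, so it is covered by the real loci of finitely many components of $\Gamma$, at least one of which meets it in a whole arc; by connectedness $C_k\subseteq\Gamma_1(\R)$, hence $\mathcal K_n\subseteq\Gamma_1(\R)$. By the first paragraph $\mathrm{class}(\Gamma_1)\ge n-1=\mathrm{class}(\Gamma)$, so, the class being additive over components, the complementary curve $\Gamma_2$ is empty; thus every component of $\Gamma$ is the Zariski closure of some $C_k$, and $\Gamma=\bigcup_{k=1}^{[n/2]}\overline{C_k}$. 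It then remains to rule out real points of these closures beyond the $C_k$ themselves. If $q\in\Gamma(\R)\setminus\mathcal K_n$ is a nonsingular point lying on a one-dimensional branch and $q\in\bbD$, then the tangent line $\ell$ of $\Gamma$ at $q$ is a chord of $\bbT$; for $q$ off a finite subset of the branch we may choose an endpoint $w$ of $\ell$ generic in $\bbT$, so $\ell=[w,\tau^{j}(w)]$, which is also tangent to $C_j\subseteq\Gamma(\R)$ at some $p_j\in\bbD$. If $q=p_j$ then $q\in C_j\subseteq\mathcal K_n$, a contradiction; if $q\ne p_j$ then $\ell$ is a bitangent of $\Gamma$, and as $q$ runs over the branch this produces infinitely many bitangents, impossible for a curve of class $n-1$.

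\textbf{Main obstacle.} What remains are a possible stray one-dimensional real branch of $\Gamma$ lying outside $\overline{\bbD}$ and finitely many isolated (hence singular) real points off $\mathcal K_n$, and I expect the first of these to be the hard part. By the second paragraph such a branch corresponds, on the $\Gamma^*$ side, to a real branch of $\Gamma^*$ sitting inside $\overline{\bbD}$ — equivalently, to a tangent line of $\Gamma$ disjoint from $\overline{\bbD}$; excluding it requires pinning down the real loci of the irreducible curves $\overline{\zeta_k(\bbT)}$ exactly, which is where one must invoke the \emph{strict} monotonicity~\eqref{eq: monotonicity assumption} (equivalently, the strict inequality in Theorem~\ref{lemma:inside}) and not merely the inclusion $C_k\subset\bbD$. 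The isolated points are disposed of by the same local analysis of $P$ near the corresponding values of $z$. Combining all of this with $\mathcal K_n\subseteq\Gamma(\R)$ yields~\eqref{Cunion}.
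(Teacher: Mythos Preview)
Your lower bound $m\ge n-1$ is correct and is the paper's argument in different clothing: where you count roots of $w\mapsto P(z,w)$, the paper counts intersections of the tangent line $l_z$ to $\bbT$ at $z$ with $\Gamma^*(\R)$ and invokes B\'ezout. These are the same count, since $l_z$ in the dual plane is exactly the pencil of lines through $z$.

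For the equality case you work far harder than necessary. Having established (your second paragraph) that $l_z\cap\Gamma^*(\R)=\{\zeta_1(z),\dots,\zeta_{n-1}(z)\}=l_z\cap\mathcal K_n^*$ for every $z\in\bbT$, the paper finishes in one stroke: if $p\in\Gamma^*(\R)\setminus\mathcal K_n^*$, pick a line $l$ through $p$ tangent to $\bbT$; then $l\cap\Gamma^*(\R)$ strictly contains $l\cap\mathcal K_n^*$, contradicting the equality just shown. This single observation replaces your component-by-component analysis, the bitangent counting, and the inside/outside case split entirely.

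Your ``main obstacle'' --- a real branch of $\Gamma$ outside $\overline\bbD$, dually a point $p\in\Gamma^*(\R)\cap\bbD$ through which no real tangent to $\bbT$ passes --- is the one case the paper's short argument does not literally cover either; the paper simply asserts such a tangent line exists without comment. So you have spotted a subtlety the paper elides rather than missed an idea it supplies. The moral, however, is still to stay on the dual side: you reach $\Gamma^*(\R)\setminus\bbD=\mathcal K_n^*$ immediately from B\'ezout, and your detour back to the primal curve via irreducible components and bitangents is unnecessary for the argument as the paper presents it.
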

%
\begin{proof}
	First note  that every tangent line of one of the curves $C_k$ is also a  tangent line of $\Gamma(\R)$. Since $C_k=-C_{n-k}$ it then follows that $C_k^* \subset  \Gamma^*(\R)$ for all $1\leq k \leq n-1$.
	
	For every $z\in \bbT$, the line in $\P^2(\R)$ that is tangent to $\bbT$ at $z$ intersects  
	\[
	\mathcal{K}_n^*:= \bigcup_{k=1}^{[n/2]} C_k^*
	\]
	in  exactly $n-1$ distinct points,  namely the polars of the lines containing the diagonals $[z,\tau^k(z)]$, $k=1,\ldots,n-1$.
	Thus, since  $\mathcal{K}_n^* \subseteq \Gamma^*(\R)$, it follows by B\'ezout's theorem that the degree of $\Gamma$ must be at least $n-1$.

	Now suppose that the class of $\Gamma$ is exactly $n-1$.   By the argument above, if $l$ is any line  in $\P^2(\R)$ that is tangent to $\bbT$, then  $\mathcal{K}_n^* \cap l = \Gamma^*(\R)\cap l$.  Suppose that $\mathcal{K}_n \not= \Gamma(\R)$ or, equivalently, $\mathcal{K}_n \not= \Gamma(\R)$.
	Let $p \in \Gamma^*(\R) \setminus \mathcal{K}_n^*$ and let  $l$ be a line containing $p$ that is tangent to $\bbT$. Then for this line $l$ we would have  $\mathcal{K}_n^* \cap l \not= \Gamma^*(\R)\cap l$ which is a contradiction. Thus, if the class of $\Gamma$ is exactly $n-1$, then $\mathcal{K}_n = \Gamma(\R)$.
\end{proof}

If \eqref{Cunion} holds, then $\Gamma(\R)$ is a complete Poncelet curve in the sense of of the following definition.
\begin{definition} \label{def:completeNponcelet} 
If there exists a real algebraic curve $\Gamma$ such that \eqref{Cunion} holds	then the set of real points $\mathcal K_n=\Gamma(\R)\subset \bbD$ of a plane algebraic curve $\Gamma$ is called a 
 \emph{complete Poncelet curve} (also, a complete Poncelet--Darboux curve) \emph{of rank $n$} or a  \emph{complete $n$-Poncelet curve}
with respect to $\bbT$, generated by the family $\{ \mathscr{P}(z):\, z\in \bbT\}$.  

The dual $\Gamma^*$   is called the \emph{Darboux curve} for $\Gamma$. 
\end{definition}
If $\{ \mathscr{P}(z):\, z\in \bbT\}$ are the convex Poncelet polygons for a closed curve $C\subset \bbD$, we alternatively say that the complete $n$-Poncelet curve is \textit{generated by $C$}; in this case, $C_1=C$. 

Notice that if $\Gamma(\R)$ is a complete Poncelet curve of rank $n$ then  for every $z\in \bbT$  there exists an $n$-sided  polygon $\mathscr{P}(z)$ inscribed in $\bbT$ such that $z$ is one of these vertices,  each  of the $n(n-1)/2$ lines connecting the $n$ vertices of $\mathscr{P}(z)$ is tangent to $\Gamma(\R)$, and each  tangent line  of $\Gamma(\R)$  containing one of the vertices of the polygon must contain a side of $\mathscr{P}(z)$. Since in this construction we can always replace $\mathscr{P}(z)$ by the boundary of its convex hull, the convexity assumption on the family $\{ \mathscr{P}(z):\, z\in \bbT\}$  of $n$-Poncelet polygons is actually not a restriction and is made for convenience. Any other choice of polygons would simply imply a different enumeration of the components $C_k$.

\begin{example}
	For $n=24$, if $\Gamma(\R)$ is a complete Poncelet curve, then 
	$$
	\Gamma(\R)=\bigcup_{k=1}^{12} C_k,
	$$ 
	where $C_1$, $C_5$, $C_7$, and $C_{11}$ are $24$-Poncelet curves, $C_{2}$ and $C_{10}$ are $12$-Poncelet curves, $C_{3}$ and $C_9$ are $8$-Poncelet curves, $C_4$ is a $6$-Poncelet curve,  $C_6$ is a $4$-Poncelet curve, $C_8$ is a $3$-Poncelet curve, and $C_{12}$ is a $2$-Poncelet curve. Note that $C_{12}$ may consist of a single point, namely in the case when connecting opposite vertices of the inscribed dodecagons all meet in a single point.
\end{example}

\subsection{Mirman's parametrization of a package of Poncelet curves} \label{sec:MirmanPoncelet}
\

\subsubsection{From tangent coordinates to the Bezoutian form} \label{sec:Bezoutian}
\

Given a package of Poncelet curves $\mathcal K_n$ generated by a family of Poncelet polygons $\{ \mathscr{P}(z):\, z\in \bbT\}$ (or by a closed curve $C$), we can use Mirman's parametrization explained in Section~\ref{sec:MirmansParam} to derive an equation for the algebraic curve $\Gamma$ in the right hand side of \eqref{eq: complete Poncelet}--\eqref{Cunion}. Recall that if  $z=z_1\in \bbT$, $w=z_2\in \bbT$ are endpoints of a line tangent to $\Gamma(\R)$, then \eqref{MirmanSubstitution} yields  an equation of the form
\begin{equation} \label{eq:P=0}
P(z,w)=0,
\end{equation}
where $P$ is a polynomial, symmetric in $z$ and $w$. 

A crucial consequence of Definition~\ref{defPonc} is that  for every $w_0\in \bbT$ there exist \textit{exactly} $n-1$ solutions  $w_1, \dots, w_{n-1}$ of the equation 
\begin{equation} \label{eq:P1=0}
P(w_0, w)=0,
\end{equation}
namely the other vertices of the Poncelet polygon $\mathscr{P}(w_0)$, all of them  on $\bbT$, and that they satisfy that
\begin{equation} \label{eq:P2=0}
P(w_i, w_j)=0, \quad 0\le i < j \leq n-1.
\end{equation}
This shows that $P(z,w)$ is of degree at least $n-1$ in $w$, $z$ (recall that its degree matchess the class of $\Gamma$, which is consistent with the statement of Lemma~\ref{eq: complete Poncelet}). However, $P(w_0, w)=0$ could have other solutions $w\in \C \setminus \bbT$, and in consequence, the actual degree  of $P$ is $N-1$, with $N\ge n$.

Denote by $f_1, \dots, f_{N-1}$ the solutions (with account of multiplicity) of  
\begin{equation} \label{eq:foci}
P(0, w)=0.
\end{equation}

\begin{prop} \label{prop:fociBl}
The points $f_1, \dots, f_{N-1}$, which are solutions (with account of multiplicity) of  \eqref{eq:foci}, are the real foci of the curve $\Gamma$.
\end{prop}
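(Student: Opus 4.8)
The plan is to use the characterization of real foci established in equation~\eqref{eq:dualG}, which says that $z\in\C$ is a real focus of $\Gamma$ precisely when $G(1,i,z)=0$, where $G(u_1,u_2,u_3)=0$ is the defining equation of the dual curve $\Gamma^*$. So the task reduces to relating the polynomial $w\mapsto P(0,w)$ from \eqref{eq:foci} to the polynomial $z\mapsto G(1,i,z)$, up to a nonzero constant and accounting for multiplicities.

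First I would recall how $P$ arises from $G$ via Mirman's substitution \eqref{MirmanSubstitution}: with $g(u,v)=G(u,v,1)$ one has $P(z_1,z_2)=(z_1+z_2)^{m}g\!\left(\tfrac{1+z_1z_2}{z_1+z_2},\,i\tfrac{1-z_1z_2}{z_1+z_2}\right)$, where $m=N-1$ is the class of $\Gamma$. The key computation is to set $z_1=0$ (and write $z_2=w$) and simplify. With $z_1=0$ the arguments become $u=1/w$ and $v=i/w$, so $P(0,w)=w^{m}g(1/w,\,i/w)=w^{m}G(1/w,\,i/w,\,1)$. Now I use the homogeneity of $G$: $G(1/w,i/w,1)=w^{-m}G(1,i,w)$ (degree $m$ homogeneous polynomial, scaling factor $1/w$). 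Hence $P(0,w)=w^{m}\cdot w^{-m}G(1,i,w)=G(1,i,w)$ — literally, not just up to a constant. Therefore the zeros of $P(0,w)=0$ counted with multiplicity coincide exactly with the zeros of $G(1,i,w)=0$ counted with multiplicity, which by \eqref{eq:dualG} are precisely the real foci of $\Gamma$ counted with multiplicity.

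One point that needs a little care is the degree bookkeeping and the passage to the ``actual'' polynomial. In Section~\ref{sec:Bezoutian} the degree of $P$ in each variable is declared to be $N-1$, where $N\ge n$ because $P(w_0,w)=0$ may pick up solutions off $\bbT$ beyond the $n-1$ Poncelet vertices. This $N-1$ is by definition the class $m$ of the curve $\Gamma$ whose dual equation is $G$, so the substitution formula \eqref{MirmanSubstitution} is being applied with exponent exactly $m=N-1$, and the identity $P(0,w)=G(1,i,w)$ holds with matching degrees. I should also note that $\Gamma$ (equivalently $\Gamma^*$) is assumed square-free per the standing convention in Section~\ref{sec:basics}, so $G$ is determined up to a nonzero constant and the multiplicities of its foci are well defined; multiplying $G$ by a constant multiplies $P$ by the same constant and changes nothing.

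I do not anticipate a serious obstacle here: the whole statement is essentially a clean unwinding of the Mirman substitution at $z_1=0$ together with the focus criterion \eqref{eq:dualG}. The only thing to be vigilant about is that $w=0$ is not accidentally introduced or lost as a root: since $P(0,w)=G(1,i,w)$ as polynomials in $w$ (no spurious factor of $w$ appears after the cancellation $w^m\cdot w^{-m}$), the root sets with multiplicity agree on the nose, including whether or not $0$ itself is a focus. Thus the proof is: apply \eqref{MirmanSubstitution} with $z_1=0$, use homogeneity of $G$ to get $P(0,w)=G(1,i,w)$, and invoke \eqref{eq:dualG} to identify these roots with the real foci of $\Gamma$.
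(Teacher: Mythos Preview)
Your argument is correct and rests on the same idea as the paper's proof---namely, that under Mirman's parametrization the specialization $z_1=0$ corresponds exactly to the focus condition $G(1,i,\cdot)=0$ from \eqref{eq:dualG}. The execution differs slightly: the paper works with the \emph{inverse} substitution \eqref{subst1}, formally treating $\zeta$ and $\overline{\zeta}$ as independent complex variables (so that $u=1/t$, $v=i/t$ forces $\zeta=0$, $\overline{\zeta}=2/t$), and then fixes a branch of the square root to read off $z_1=0$, $z_2=t$. Your route---plugging $z_1=0$ directly into the forward formula \eqref{MirmanSubstitution} and invoking the degree-$m$ homogeneity of $G$ to obtain the clean identity $P(0,w)=G(1,i,w)$---is more direct and avoids the branch-choice discussion entirely; it also makes the multiplicity statement transparent, since the two sides are literally equal as polynomials in $w$.
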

\begin{proof}
	As it was pointed out,  the equation 
	$$
	g(u,v)=0
	$$
	of the dual curve (in the affine coordinates)  can be obtained from \eqref{eq:P=0} using the substitution \eqref{subst1}, with
	$$
	u=\frac{\zeta+\overline \zeta}{2}, \quad v=\frac{\zeta-\overline \zeta}{2i}.
	$$
	according to which $z=z(\zeta,\overline \zeta)$, $w=w(\zeta,\overline \zeta)$. The resulting equation should be homogenized to a curve in $\P^2(\C)$ with its equation obtained by taking $\zeta \to \zeta/t$, $t\in \C$. As it follows from \eqref{eq:dualG}, foci of $\Gamma$ are solutions of the equation
	$$
	g\left(\frac{1}{t}, \frac{i}{t}\right)=0. 
	$$
	We see that 
	$$
	u=\frac{\zeta+\overline \zeta}{2}=\frac{1}{t}, \quad v=\frac{\zeta-\overline \zeta}{2i}=\frac{i}{t}
	$$
	implies that $\zeta=0$ and $\overline \zeta=2/t$. (Notice  that here   $u$ and $v$ are  \emph{complex}  numbers and hence $\overline{\zeta}=u-iv$ need not 
	be the complex conjugate of $\zeta=u+iv$.) If we fix the branch of the square root in  \eqref{subst1} by $\sqrt{-1}=i$, then in this case $z=0$, $w=t$ so that the foci of $\Gamma$ are precisely the solutions of the equation
	$$
	P( 0, t)=0.
	$$ 
\end{proof}

An important consequence of the discussion in \cite[Section 5]{Mirman:2005dj} (see in particular Lemma 5.1.2) is the following result:
\begin{thm} \label{thm:Mirman}
Let $n\in \N$, $n\geq 3$. With the notation above (see also \eqref{notation:zeros}--\eqref{reversed}) and up to a multiplicative constant,
\begin{equation} \label{eq:representation}
	P(z,w)= \frac{w\,  \Phi_{N-1}(w) \Phi_{N-1}^*(z)- z\,  \Phi_{N-1}(z) \Phi_{N-1}^*(w)}{w-z},
\end{equation}
where
\begin{equation} \label{eq:representation2}
\Phi_{N-1}(z)=\Phi_{N-1}(z; f_1, \dots, f_{N-1}).
\end{equation}
Thus,  the real foci $f_j$ of $\Gamma$ are precisely the zeros of $\Phi_{N-1}(z)$. 

Furthermore, we have the   relation  
\begin{equation}\label{eq: relation}
N=n +2m+d,
\end{equation}
where $m$ is the number of real foci $f_j$  (accounted with multiplicity) that lie in the exterior of $\T$ 
and $d$ is the number of real foci $f_j$  (accounted with multiplicity) that lie on $\T$.

In particular, if  all real foci $f_j$ are  in $\bbD$, then $N=n$. 
\end{thm}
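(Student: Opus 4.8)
The plan is to write down the Bezoutian on the right of \eqref{eq:representation} as a candidate polynomial, check that it agrees with $P$ up to a constant, and then extract the index relation \eqref{eq: relation} by a root-count on $\bbT$. First I would set $\Phi:=\Phi_{N-1}(\,\cdot\,;f_1,\dots,f_{N-1})$ and let $Q(z,w)$ be the right-hand side of \eqref{eq:representation}. Since the numerator vanishes identically on the diagonal $z=w$, $Q$ is a polynomial; it is symmetric, and because $\Phi$ is monic of degree $N-1$ its leading coefficient in $w$ is $\Phi^*(z)$, so $Q$ has degree $N-1$ in each variable — the same bidegree as $P$, whose degree equals the class $N-1$ of $\Gamma$. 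Putting $z=0$ and using $\Phi^*(0)=1$ gives $Q(0,w)=\Phi(w)$; on the other hand, by Proposition~\ref{prop:fociBl} the $f_j$ are, with multiplicity, exactly the $N-1$ roots of $P(0,w)=0$, so $P(0,\cdot)$ has degree exactly $N-1$ and $P(0,w)=c_0\,\Phi(w)$ with $c_0$ its leading coefficient. After rescaling $P$ we may assume $P(0,\cdot)=Q(0,\cdot)$, and then $E:=P-Q$ is symmetric of bidegree at most $(N-1,N-1)$ and vanishes at $z=0$, hence also at $w=0$, so $E=zw\,\widetilde E$ with $\widetilde E$ of bidegree at most $(N-2,N-2)$.

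To conclude $\widetilde E\equiv 0$, and thus \eqref{eq:representation}--\eqref{eq:representation2}, I would invoke Mirman's \cite[Lemma 5.1.2]{Mirman:2005dj}. The crux is a level-set description: for every $z_0\in\bbT$, the zeros of $Q(z_0,\cdot)$ other than $z_0$ are exactly the $w$ with $w\,\Phi(w)/\Phi^*(w)=z_0\,\Phi(z_0)/\Phi^*(z_0)$, and Mirman's lemma identifies the vertices of the Poncelet polygon $\mathscr{P}(z_0)$ with the unimodular such solutions — equivalently, the solutions on $\bbT$ of an equation of type \eqref{eqBl}, which is the Gau--Wu/Daepp--Gorkin picture. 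Matching these zero structures for a dense set of $z_0$, together with the tangency relations \eqref{eq:P1=0}--\eqref{eq:P2=0}, forces $\widetilde E$ to vanish identically; this can also be reproven directly with the Szeg\H{o}-recursion and POPUC machinery of Section~\ref{sec:POPUC}. Once \eqref{eq:representation} holds, the statement that the real foci of $\Gamma$ are the zeros of $\Phi_{N-1}$ is just Proposition~\ref{prop:fociBl}.

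For the relation $N=n+2m+d$, fix a generic $w_0\in\bbT$ and put
\[
R(w):=w\,\Phi(w)\,\Phi^*(w_0)-w_0\,\Phi(w_0)\,\Phi^*(w)=\Phi^*(w_0)\bigl(w\,\Phi(w)-\mu\,\Phi^*(w)\bigr),\qquad \mu:=\frac{w_0\,\Phi(w_0)}{\Phi^*(w_0)},
\]
so that $R(w)=(w-w_0)\,P(w_0,w)$. Since $w_0\in\bbT$ one has $|\Phi(w_0)|=|\Phi^*(w_0)|$, hence $|\mu|=1$, and a short computation shows $w^{N}\,\overline{R(1/\bar w)}$ is a nonzero scalar multiple of $R(w)$; thus $R$ is self-inversive with respect to $\bbT$, so its zeros off $\bbT$ occur in mirror pairs $\{w,1/\bar w\}$, equally split between $\bbD$ and the exterior of $\overline{\bbD}$. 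The heart of the matter is to show that $R$ has exactly $m$ zeros in $\bbD$, hence $2m$ zeros off $\bbT$ and $N-2m$ on $\bbT$: I would do this by factoring $\Phi=\Phi_{\mathrm{in}}\Phi_{\mathrm{out}}\Phi_{\mathrm{on}}$ according to whether a zero lies in $\bbD$, outside $\overline{\bbD}$, or on $\bbT$, noting that $\Phi_{\mathrm{on}}^*$ equals $\Phi_{\mathrm{on}}$ up to a unimodular constant, and dividing $R$ in turn by $\Phi_{\mathrm{on}}$, $\Phi_{\mathrm{in}}^*$ and $\Phi_{\mathrm{out}}$ — none of which vanishes in $\bbD$ — to reduce the count to that of zeros in $\bbD$ of a difference $w\,b_{\mathrm{in}}(w)-\mu'\,b_{\mathrm{out}}(w)$ of finite Blaschke products of degrees $p+1$ and $m$; a Rouch\'e estimate on circles $|w|=r$ with $r\uparrow 1$, or equivalently a homotopy that slides the $m$ exterior foci inward across $\bbT$ one at a time (the all-interior case being handled by the strict bound $|w\,\Phi(w)|<|\Phi^*(w)|$ on $\bbD$, which gives no zeros in $\bbD$), then yields the count. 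Granting this, $P(w_0,\cdot)$ has $N-1-2m$ zeros on $\bbT$, of which — by \eqref{eq:representation} and the Poncelet property — exactly $n-1$ are genuine vertices of $\mathscr{P}(w_0)$ while the remaining $d$ are the foci of $\Gamma$ lying on $\bbT$ (each a common zero of $\Phi$ and $\Phi^*$, hence a zero of $R$ for \emph{every} $w_0$); therefore $N-1-2m=(n-1)+d$, that is, $N=n+2m+d$.

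I expect this last count — that $R$ has exactly $m$ zeros in $\bbD$ — to be the main obstacle: on all of $\bbT$ the two monomials forming $w\,\Phi(w)-\mu\,\Phi^*(w)$ have \emph{equal} modulus, so Rouch\'e cannot be applied on $\bbT$ itself, and one must work on interior circles with explicit control on how close the zeros of the $\Phi^*$-factors come to $\bbT$, or organize the homotopy so that precisely one zero of $R$ crosses $\bbT$ each time an exterior focus does. Granting it, the final assertion is immediate: if all $f_j\in\bbD$ then $m=d=0$ and $N=n$; moreover $\Phi_{N-1}/\Phi_{N-1}^*$ is then a Blaschke product of degree $n-1$, and the Poncelet polygons are exactly the zero sets $\mathcal{Z}_n^{\lambda}$ of the paraorthogonal polynomials built on $\Phi_{n-1}$, as in Section~\ref{sec:POPUC}.
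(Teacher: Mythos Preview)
Your approach to the index relation \eqref{eq: relation} via self-inversiveness and a homotopy/Rouch\'e count is a genuine alternative to the paper's argument, which simply cites \cite[Lemma~5.1.2]{Mirman:2005dj} for $d=0$ and then, for $d>0$, observes that each focus $f_j\in\bbT$ contributes a factor $(z-f_j)(w-f_j)$ to $P$ (up to a unimodular constant), so one peels these off and reduces to the $d=0$ case with $N$ replaced by $N-d$. Your route is more self-contained but heavier; the paper's factoring trick is cleaner and avoids the delicate ``exactly one zero crosses $\bbT$ per focus'' step you flag as the obstacle.

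There is, however, a genuine gap in your argument for the representation \eqref{eq:representation} itself. After writing $E=P-Q=zw\,\widetilde E$ with $\widetilde E$ of bidegree at most $(N-2,N-2)$, you know only that for each $z_0\in\bbT$ the polynomial $\widetilde E(z_0,\cdot)$ vanishes at the $n-1$ Poncelet vertices. This forces $\widetilde E(z_0,\cdot)\equiv 0$ only when $n-1>N-2$, i.e.\ $N=n$. In the very case the theorem is addressing---foci on or outside $\bbT$, so $N>n$---matching the \emph{unimodular} zeros is not enough: $P(z_0,\cdot)$ and $Q(z_0,\cdot)$ each have $N-n$ additional zeros off $\bbT$, and nothing in your sketch shows these coincide. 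The ``tangency relations \eqref{eq:P1=0}--\eqref{eq:P2=0}'' you appeal to are exactly the vanishing at Poncelet vertices and add nothing new; the POPUC machinery of Section~\ref{sec:POPUC} requires all zeros of $\Phi_{N-1}$ to lie in $\bbD$, so it too only handles $N=n$. What is missing is control of the \emph{complex} tangent lines to $\Gamma$, not just the real chords of $\bbT$---this is precisely the content of Mirman's Theorem~1.1 and equation~(20) in \cite{Mirman:2005dj}, which the paper cites rather than reproves. Without it, your identification $P=Q$ is established only in the special case $m=d=0$, and the general Bezoutian representation remains unproved.
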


Since the right hand side in \eqref{eq:representation} is a Bezoutian of the polynomials $\Phi_{N-1}$ and $ \Phi_{N-1}^*$, we say that $P(z,w)$ is written in a \textit{Bezoutian form}. 

\begin{proof}
	Formula \eqref{eq:representation} has been proved in \cite{Mirman:2005dj}, see Theorem 1.1 and equation (20) therein. Relation \eqref{eq: relation} for $d=0$ also follows from \cite[Lemma 5.1.2]{Mirman:2005dj}. Thus, we only need to consider  $d>0$.

Suppose $f_j$ is a real focus such that $|f_j|=1$. Then $\overline{f_j} =1/f_j$  and we easily find that
$$
P(z,w)=\operatorname{const}\, (z-f_j)(w-\overline{f_j}) \widetilde{P}(z,w), 
$$
where  $\widetilde{P}(z,w)$ is the polynomial as given by \eqref{eq:representation} with $\Phi_{N-1}(z)$ replaced by $\widetilde{\Phi}_{N-2}(z):=\Phi_{N-2}(z;f_1,\ldots,f_{j-1},f_{j+1},\ldots,f_{N-1})$. If we had exactly $d$ foci on the unit circle, the degree of $\widetilde{P}(z,w)$ would be $N-d$, and by  \cite[Lemma 5.1.2]{Mirman:2005dj}, the number   of solutions of the equation $\widetilde P(w_0, w)=0$ on the unit circle satisfies $N-d = n + 2m$. Thus, \eqref{eq: relation} holds.
\end{proof}

\subsubsection{From the Bezoutian form to Poncelet curves}
\

As we have just seen, the Poncelet property of a curve implies that its Mirman's parametrization can be written in a Bezoutian form. This motivates us to analyze the converse: is  a Poncelet curve associated to any symmetric polynomial in a Bezoutian form? This approach is tempting since the only data necessary to generate such polynomials is the collections of foci $f_j$'s.

Let $P(z,w)$ be a symmetric polynomial in $z, w$, given by \eqref{eq:representation}--\eqref{eq:representation2}.
Clearly, representation \eqref{eq:representation} is  sufficient for property  \eqref{eq:P2=0} to hold. This does not mean however that $P(z,w)=0$ automatically parametrizes a Poncelet curve. A necessary condition for that is, for instance, that for every $z\in \bbT$, the equation $P(z,w)=0$ has the same number of solutions on $\bbT$. Mirman showed in \cite[Thm.~1]{Mirman:2003ho}  that this holds if $d=0$ and
\begin{equation} \label{eq:Mirmanscondition}
1+\sum_{j=1}^{N-1} \frac{1-|f_j|^2}{|z-f_j|^2} > 0 \quad \text{for all $z\in \bbT$}.
\end{equation}
More precisely, \eqref{eq:Mirmanscondition} implies that for each $z\in \T$, the equation $P(z,w)=0$ has exactly $n-1 =N-2m-1$ distinct solutions $w\in\T$ (notice that it is automatically satisfied if all $f_j\in \bbD$). 
For such polynomials $P(z,w)$ we can then define a family of $n$-Poncelet polygons by
$$
\mathscr{P}(z):=\partial \left(\operatorname{conv} \{w\in \T: P(z,w) =0\}\right)
$$
that generate the package of Poncelet curves $C_1,\dots,C_{[n/2]}$; here and in what follows we denote the convex hull of the set $S$ by $\operatorname{conv}(S)$.

Since $P(z,w)$ is a symmetric polynomial in $z$ and $w$, it can be written  in the form $P(z,w)=h(z+w,zw)$. If we set
$$
g(u,v):={\bar{\zeta}}^{N-1} h\left(\frac{2}{\bar{\zeta}},\frac{\zeta}{\bar{\zeta}}\right)\ \text{with } \zeta=u+iv, \; \overline \zeta=u-iv,
$$
then $g(u,v)$ is a polynomial with real coefficients of degree $N-1$.  Fujimura (see \cite{Fujimura:2018dt}) expressed $g(u,v)$ as a polynomial in  $\zeta$ 
and $\overline{\zeta}$, where the coefficients are given by explicit formulas   in the elementary symmetric functions (and their complex conjugates) of the $f_j$'s.
She only stated these formulas   in the case when $m=d=0$, but her arguments work  in  general.
Let $G(u_1,u_2,u_3)$ denote the homogenization of $g(u,v)$ as usual. Then the real algebraic curve given by $G(u_1,u_2,u_3)=0$ is the dual of a real  algebraic
curve $\Gamma$ of class $N-1$ whose real foci are the $f_j$'s. By construction, we   have 
$$
\bigcup_{k=1}^{[n/2]} C_k \subseteq \Gamma(\R).
$$
We cannot assure in general that this inclusion is not strict. 
Neither can we assure that the $C_k$'s will be Poncelet curves, in the sense of Definition~\ref{defPonc}. Moreover, Mirman \cite[Remark~1]{Mirman:2003ho} stated (without proof) that $C_1$ is always convex. In the following example we show that this is false in general, if $m>0$.

\begin{example} \label{exampleMirman}
Consider a family of examples with $P(z,w)$  given by \eqref{eq:representation}--\eqref{eq:representation2}, where $N=5$ and
$$
\Phi_{4}(z)=\Phi_{4}(z; 0,0,0,a), \quad a\in \C\setminus\bbT.
$$
In this case,  the corresponding polynomial $g(u,v)$ is of the form  
\begin{align*}
	g(u,v)&=(a^2-1) v^4+2\left(( a^2-1) u^2-2 a u-2a^2+6\right)v^2\\
	&\qquad + \left((a^2-1) u^4-8 a u^3+ (12-4 a^2 ) u^2+16 a u-16\right).
\end{align*}
It is easy to show that $g(u,v)$ is an irreducible polynomial (over $\C$) by looking at its Newton polygon which is the convex hull of the lattice points  $(0,0)$, $(4,0)$, and $(0,4)$. The method of showing (absolute) irreducibility of polynomials in several variables via Newton polytopes  is nicely explained in 
\cite{GAO2001501}.   Notice that $g(u,v)$ can be viewed as quadratic polynomial in $v^2$. Thus it is relatively easy to carry out explicit calculations to 
study  $\Gamma^*(\R)$. 

In particular, 
\begin{itemize}
	\item $\Gamma^*(\R)$ is  nonsingular and a disjoint union of two nested ``ovals''  in $\overline \C$.
	\item If $|a|<1$, then both   components of $\Gamma^*(\R)$ lie in the exterior of $\bbT $ (which means that $\Gamma(\R)\subset \bbD$). 
	Furthermore,  if $|a| \leq \sqrt{3-\sqrt{6}}\approx 0.741964$, then neither of the two   components of $\Gamma^*(\R)$ has  inflection points (so, $\Gamma(\R)$ has no cusps).
	If $ \sqrt{3-\sqrt{6}}< |a|<1$, then the larger  component of $\Gamma^*(\R)$ has  inflection points, see Figure~\ref{fig:counterex1}, left.
	
	\item If $1\le |a|\leq 5/3$, then one  component  of $\Gamma^*(\R)$ intersects  $\bbT$, see Figure~\ref{fig:counterex1}, right. 
	\item If $|a|>5/3$, then one   component  of $\Gamma^*(\R)$ lies in the exterior  and the other  
	lies in the interior  of $\bbT$ (so, the same is true for $\Gamma(\R)$).
	Furthermore,   if $|a|\geq \sqrt{3+\sqrt{6}}\approx 2.33441$, then neither of the two   components of $\Gamma^*(\R)$ has  inflection points, see Figure~\ref{fig:counterex2}.
	If $5/3<|a| < \sqrt{3+\sqrt{6}}$, then  the component of $\Gamma^*(\R)$ that lies in the exterior of $\T$   has  inflection points.
\end{itemize}

\begin{figure}[htb] 
	\begin{center}
		\begin{tabular}{c p{3mm}c}		
			\includegraphics[width=0.35\linewidth]{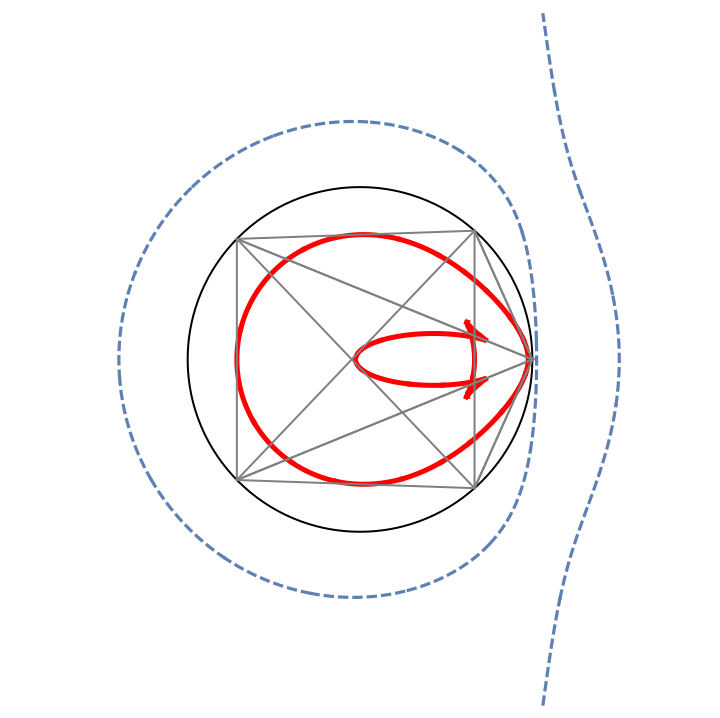} & 	& \includegraphics[width=0.35\linewidth]{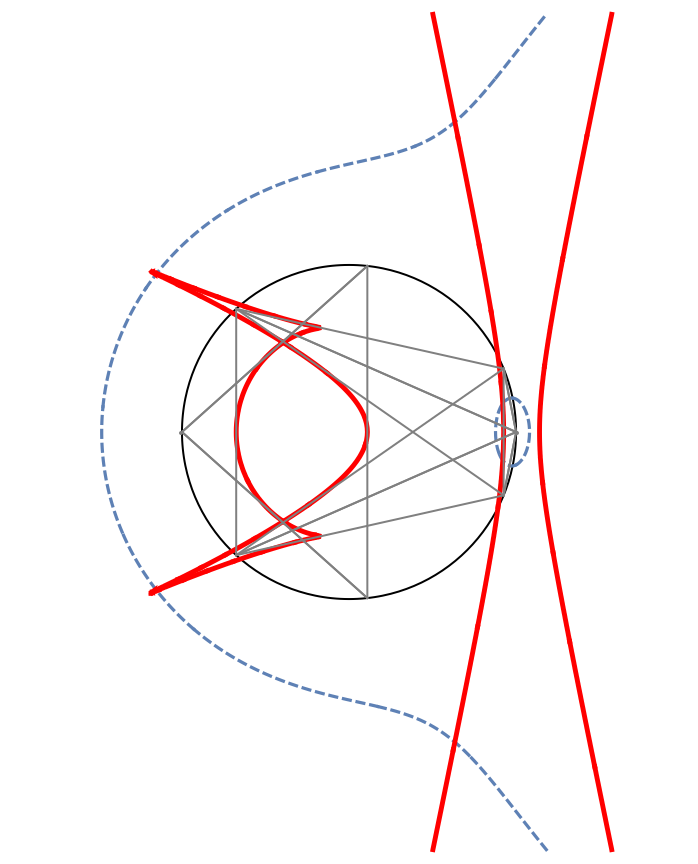} 
		\end{tabular}
	\end{center}
	\caption{Illustration for Example~\ref{exampleMirman}: curve $\Gamma(\R)$ (solid lines) and its dual $\Gamma^*(\R)$, dotted lines, for $a=0.9$ (left) and $a=1.5$, in which case Mirman's condition \eqref{eq:Mirmanscondition} is not satisfied (right). Notice that in this situation $\Gamma(\R)$ is tangent both to triangles and pentagons.}
	\label{fig:counterex1}
\end{figure} 
\begin{figure}[htb] 
	\begin{center}
		\begin{tabular}{c p{3mm}c}		
			\includegraphics[width=0.35\linewidth]{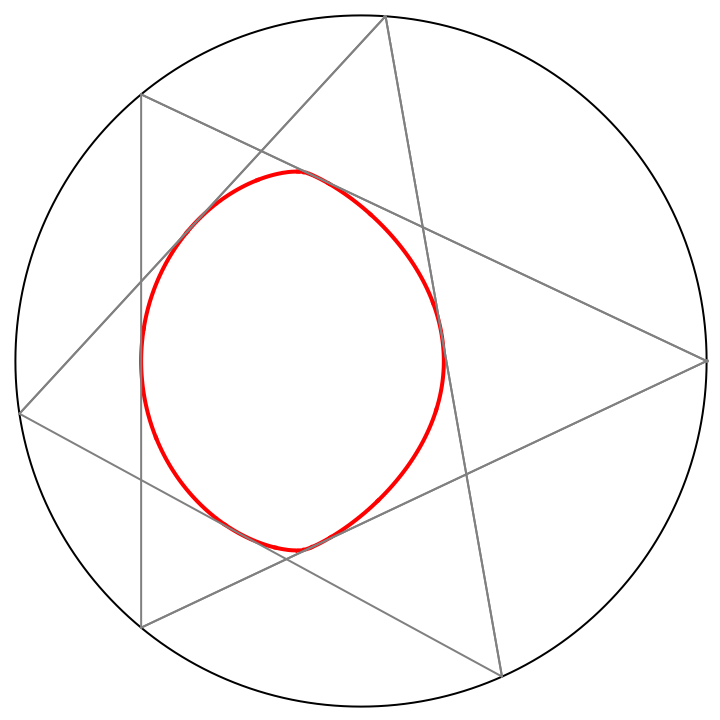} 
			& 	& \includegraphics[width=0.35\linewidth]{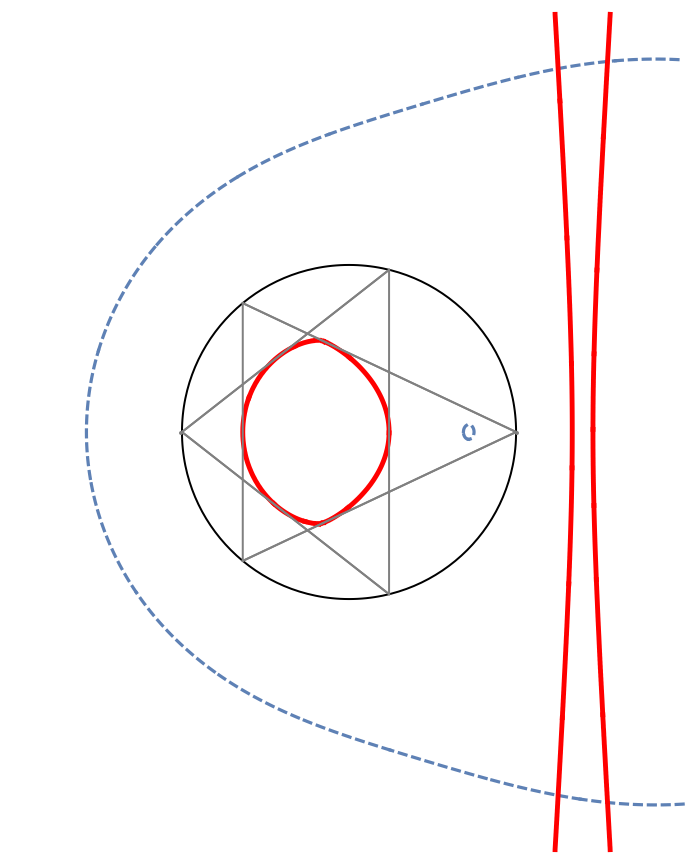} 
		\end{tabular}
	\end{center}
	\caption{Illustration for Example~\ref{exampleMirman}: a nonsingular $3$-Poncelet curve $C_1$ (left) and the corresponding algebraic curve $\Gamma(\R)$ (solid lines) and its dual $\Gamma^*(\R)$, dotted lines, for $a=2.4$. Curve $C_1$ for $a=2$ appears in Figure~\ref{fig:convexity}, right.}
	\label{fig:counterex2}
\end{figure} 

Notice that Mirman's condition \eqref{eq:Mirmanscondition} to generate Poncelet curves is satisfied whenever $|a|< 1$ or $|a|>5/3$.
For $|a|<1$,  we obtain a package of two $5$-Poncelet curves $C_1$ and $C_2$ with $C_1\cup C_2=\Gamma(\R)$. 
In this case, $C_1$ is nonsingular and equal to the boundary of the convex hull of $\Gamma(\R)$.
Furthermore, if $|a| \leq \sqrt{3-\sqrt{6}}$, then $C_2$ is also nonsingular and convex  since it's dual $C_2^*$ has no inflection points. 

For $|a|>5/3$,  the situation is more surprising and we obtain a package consisting of a single $3$-Poncelet curve $C_1$ with $C_1\not=\Gamma(\R)$, see Figure~\ref{fig:counterex2}.
If $|a|\geq \sqrt{3+\sqrt{6}}$, then $C_1$ is nonsingular and convex since it's dual $C_1^*$ has no inflection points. However, if $5/3<|a|<\sqrt{3+\sqrt{6}}$, then $C_1$ is singular since $C_1^*$  has inflection points which correspond to cusp 
singularities of $C_1$. 
\end{example}

\begin{example}
Another common misconception is that in the package of Poncelet curves $C_1$ encloses the rest of the curves $C_k$. This is definitely the case if the starting point is a family of convex Poncelet polygons $\{ \mathscr{P}(z):\, z\in \bbT\}$, as in  Definition~\ref{def:Package}. However, if we construct our curves from the representation \eqref{eq:representation}--\eqref{eq:representation2}, satisfying  \eqref{eq:Mirmanscondition}, this is not always true. The situation with $N=7$,
$$
\Phi_{6}(z)=\Phi_{6}(z; 0,0,0,0,0,a) 
$$
and $a=1.41$ is illustrated in Figure~\ref{fig:counterex3}.
\begin{figure}[htb] 
	\begin{center}
					\begin{overpic}[width=0.35\linewidth]{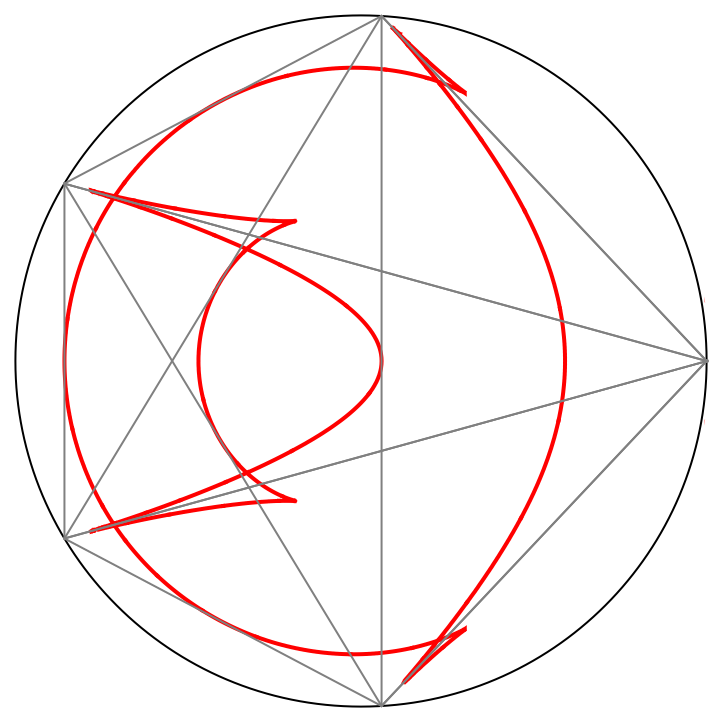}
			\put(95,107){$C_1$}
				\put(60,80){$C_2$}
		\end{overpic}
	\end{center}
	\caption{Illustration for Example~\ref{exampleMirman}: curves $C_1$ and $C_2$ have a non-empty intersection.}
	\label{fig:counterex3}
\end{figure} 
\end{example}

\section{Complete Poncelet curves with given foci} \label{sec:poncelet_general}

In this section, we gather all the knowledge accumulated so far to analyze minimal class complete $n$-Poncelet curves $\Gamma$ in the sense of the Definition~\ref{def:completeNponcelet}. In particular, we show that they are characterized by either one of these properties:
\begin{itemize}
	\item $\Gamma$ is of class $n-1$;
	\item all real foci of $\Gamma$ are inside the unit disk $\bbD$.
\end{itemize}
Moreover, in this case the set of foci $f_1, \dots, f_{n-1}$ determines $\Gamma$ completely, so that this is a bijection between points in $\bbD^{n-1}$ and complete $n$-Poncelet curves. The curve $\Gamma$ can be reconstructed from its real foci. We describe three approaches to this problem; all three can be formulated in terms of the paraorthogonal extension of the set $f_1,\dots, f_{n-1}$. 

Recall that the paraorthogonal extension (see Definition~\ref{def:popuc}) consists in applying the Szeg\H{o} recursion as in \eqref{def_POPUC} with $\Phi_{n-1}(z)=\Phi_{n-1}(z; f_1,\dots, f_{n-1})$ and obtaining the   $1$-parametric family of points $\mathcal{Z}_n^\lambda  = \{z_{n,1}^\lambda , \dots , z_{n,n}^\lambda \}$ on $\bbT$ as the zeros of  the resulting paraorthogonal polynomials of degree $n$. It turns out that sets $\mathcal{Z}_n^\lambda$ are identified by the same Blaschke product with zeros at $f_j$'s and, at the same time, are eigenvalues of a $1$-parametric family of unitary dilations (see Section~\ref{sec:dilations}) of the CMV matrix   $\cmv{n-1}\in \mathcal{S}_{n-1}$ associated with $\Phi_{n-1}$. In short, all these approaches are completely equivalent.

\begin{thm} \label{thm:matrices}
	Let $\Gamma$ be a plane real algebraic curve such that $\Gamma(\R)$ is a complete $n$-Poncelet curve ($n\ge 3$) with respect to $\bbT$, generated by a family of convex Poncelet polygons $\mathscr{P}(z)$ (see Definition~\ref{def:completeNponcelet}), so that
	$$
	\Gamma(\R)=	\bigcup_{k=1}^{[n/2]} C_k  .
	$$ 
Then  $\Gamma$ is of minimal class (that is, of class $n-1$) if and only if all real foci  are contained the unit disk $\bbD$. In this case,  $C_1$ is a convex curve, real foci $f_1, \dots, f_{n-1}$ of $\Gamma$ (enumerated with account of multiplicity)   determine $\Gamma$, and for every set of  points $f_1, \dots, f_{n-1}$  in $\mathbb D$, not necessarily all distinct,  there exists a (unique) algebraic $n$-Poncelet curve $\Gamma$ of class $n-1$ with real foci precisely at $f_1, \dots, f_{n-1}$. Such a curve $\Gamma$ is completely determined by its set of real points $\Gamma(\R)$.
		\item There are three equivalent realizations of the complete $n$-Poncelet curve $\Gamma$ of class $n-1$:
		\begin{enumerate}[(i)]
			\item $\Gamma(\R)$ is the envelope of the closed polygons supported on  the paraorthogonal extension (see Definition~\ref{def:popuc}) of the set $f_1,\dots, f_{n-1}$. 

			\item The Blaschke product 
			\begin{equation} \label{Blasch}
				B_{n}(z) = z\, \frac{\Phi_{n-1}(z)}{\Phi_{n-1}^*(z)}, \quad \Phi_{n-1}(z)=\Phi_{n-1}(z; f_1, \dots, f_{n-1}),
			\end{equation}
			identifies the set of points $\mathcal{Z}^\lambda=\{z_1^\lambda , \dots , z_n^\lambda \}$ on $\bbT$ (see Definition~\ref{def:identBlaschke}) in such a way that $\Gamma(\R)$ is the envelope of the closed polygons supported on  $\mathcal{Z}^\lambda$.
				\item there is a matrix $\bm A\in \mathcal{S}_{n-1}$ with its spectrum equal to the set $f_1,\dots, f_{n-1}$ (and hence, $\bm A$ unique up to unitary equivalence) such that 
			$$
			\operatorname{conv}(\Gamma(\R))=\partial W(\bm A).
			$$
			The equation of the dual curve $\Gamma^*$ in $\P^2(\C)$ is given by
			$$
			G_{\bm A}(u_1, u_2, u_3)=0,
			$$
			where $G_{\bm A}(u_1, u_2, u_3)$ was defined in  \eqref{defKippenhahn}. As a matrix $\bm A$ we can take the   cut-off CMV matrix  $\cmv{n-1}$ corresponding to the polynomial $\Phi_{n-1}(z; f_1,\dots, f_{n-1})$.
		\end{enumerate}
	Furthermore, each set $\mathcal{Z}^\lambda=\{z_1^\lambda , \dots , z_n^\lambda \}$ of eigenvalues of a $1$-parametric family of unitary dilations of  matrix $\bm A$ in (i) is identified by the Blaschke product \eqref{Blasch}.
\end{thm}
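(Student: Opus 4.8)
The plan is to pin down the class by means of Theorem~\ref{thm:Mirman}, then to reconstruct the curve directly from its foci through paraorthogonal polynomials and finite Blaschke products, and finally to identify that reconstruction with the Kippenhahn curve of a cut-off CMV matrix. First I would settle the equivalence ``minimal class $\Leftrightarrow$ all real foci in $\bbD$''. Since $\Gamma(\R)=\bigcup_{k=1}^{[n/2]}C_k$ is a complete $n$-Poncelet curve, Theorem~\ref{thm:Mirman} applies to its Mirman parametrization $P(z,w)=0$: this $P$ has the Bezoutian form \eqref{eq:representation}--\eqref{eq:representation2} with real foci $f_1,\dots,f_{N-1}$, the class of $\Gamma$ equals $N-1$, and $N=n+2m+d$ where $m$ (resp.\ $d$) is the number of real foci outside (resp.\ on) $\bbT$. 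Hence the class is $n-1$ iff $N=n$, i.e.\ $m=d=0$, i.e.\ every real focus lies in $\bbD$. When this holds, $P(z,w)$ is the Bezoutian of $\Phi_{n-1}(z;f_1,\dots,f_{n-1})$ and $\Phi_{n-1}^*$, hence is determined up to a scalar by the multiset $\{f_j\}$; as $\Gamma^*$, and thus $\Gamma$, is recovered from $P$ via Mirman's substitution \eqref{subst1}, the real foci determine $\Gamma$, which is in turn recovered from its square-free defining polynomial and hence from $\Gamma(\R)$.

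For existence, and simultaneously for realizations (i)--(ii), I would argue constructively. Given $f_1,\dots,f_{n-1}\in\bbD$ (not necessarily distinct), put $\Phi_{n-1}(z)=\prod_j(z-f_j)$ and $B_n(z)=z\,\Phi_{n-1}(z)/\Phi_{n-1}^*(z)$ as in \eqref{Blasch}. For $w_0\in\bbT$ one has $\Phi_{n-1}^*(w_0)=w_0^{\,n-1}\overline{\Phi_{n-1}(w_0)}\neq 0$, and a short manipulation of \eqref{eq:representation} shows the numerator of $P(w_0,w)$ equals $\Phi_{n-1}^*(w_0)$ times the paraorthogonal polynomial $w\Phi_{n-1}(w)-\overline\lambda\,\Phi_{n-1}^*(w)$ with $\overline\lambda:=B_n(w_0)\in\bbT$; dividing by $w-w_0$ shows $P(w_0,\cdot)$ has exactly $n-1$ roots, all on $\bbT$, namely the remaining $n-1$ points of the paraorthogonal extension $\mathcal Z_n^\lambda$ of $f_1,\dots,f_{n-1}$, which coincides with the set identified by $B_n$ at $\overline\lambda$ (compare \eqref{def_POPUC}, \eqref{notation:Blaschke}, \eqref{characteristicParaorthog}). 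Then $\mathscr P(z):=\partial\operatorname{conv}\bigl(\mathcal Z_n^{\lambda(z)}\bigr)$, $\lambda(z)=\overline{B_n(z)}$, is a family of convex $n$-Poncelet polygons, and the Poncelet correspondence $\tau$ is smooth with $\frac{d}{d\theta}\arg\tau(e^{i\theta})>0$ because $B_n|_{\bbT}$ is a smooth $n$-fold covering of $\bbT$ with strictly increasing argument; so condition \eqref{eq: monotonicity assumption} holds, the construction of Section~\ref{def:assocPoncelet} yields curves $C_k$ with $\bigcup_{k=1}^{[n/2]}C_k\subseteq\Gamma(\R)$, where $\Gamma$ is the class $n-1$ curve dual to the homogenization of the $g$ associated to $P$, with real foci $f_j$ by Proposition~\ref{prop:fociBl}. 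Lemma~\ref{lem:class} upgrades the inclusion to equality, so $\Gamma(\R)$ is a complete $n$-Poncelet curve with the prescribed foci; with the uniqueness above this gives the bijection between multisets in $\bbD^{n-1}$ and complete $n$-Poncelet curves of class $n-1$, and (i), (ii) are two descriptions of the one family $\mathcal Z_n^\lambda$ whose polygons envelope $\Gamma(\R)$.

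For realization (iii) I would take $\bm A=\cmv{n-1}(\alpha_0,\dots,\alpha_{n-2})$, where $\alpha_0,\dots,\alpha_{n-2}$ are the Verblunsky coefficients of $\Phi_{n-1}(z;f_1,\dots,f_{n-1})$; then $\bm A\in\mathcal S_{n-1}$ has characteristic polynomial $\Phi_{n-1}$ by \eqref{characteristicequ}, hence $\sigma(\bm A)=\{f_j\}$, and $\bm A$ is unique in $\mathcal S_{n-1}$ up to unitary equivalence. Kippenhahn's theorem (Theorem~\ref{thm:Kippenhahn} and Section~\ref{sec:numran}) gives that the Kippenhahn curve $\Gamma_{\bm A}$ has class $n-1$, real foci $\{f_j\}$, dual $G_{\bm A}(u_1,u_2,u_3)=0$ as in \eqref{defKippenhahn}, and $\operatorname{conv}(\Gamma_{\bm A}(\R))=W(\bm A)$; moreover the unitary dilations of $\bm A$ are exactly the matrices $\cmv{n}(\alpha_0,\dots,\alpha_{n-2},\lambda)$, $\lambda\in\bbT$ (Section~\ref{sec:dilations}), whose eigenvalues are the sets $\mathcal Z_n^\lambda$, so by the Gau--Wu description of $\mathcal S_{n-1}$ the curve $\Gamma_{\bm A}(\R)$ is itself a complete $n$-Poncelet curve whose convex Poncelet polygons are $\operatorname{conv}(\mathcal Z_n^\lambda)$ --- the very polygons attached to $\Gamma$. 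Two complete $n$-Poncelet curves of class $n-1$ with the same real foci are equal, by the uniqueness already proved, whence $\Gamma_{\bm A}=\Gamma$; in particular $\operatorname{conv}(\Gamma(\R))=W(\bm A)$, so $C_1=\partial W(\bm A)$ is convex (and, for $\bm A\in\mathcal S_{n-1}$, a smooth corner-free curve), and $\Gamma^*$ is $G_{\bm A}=0$. The closing ``furthermore'' is then immediate: the eigenvalues of $\cmv{n}(\alpha_0,\dots,\alpha_{n-2},\lambda)$ are the zeros of $z\Phi_{n-1}(z)-\overline\lambda\,\Phi_{n-1}^*(z)$, i.e.\ the solutions of $B_n(z)=\overline\lambda$, which is exactly the set identified by $B_n$ in the sense of Definition~\ref{def:identBlaschke}.

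The step I expect to be the main obstacle is the identification in the previous paragraph: establishing that the Kippenhahn curve of a cut-off CMV matrix in $\mathcal S_{n-1}$ is a \emph{complete} $n$-Poncelet curve --- that $\partial W(\bm A)$ has the Poncelet property with precisely the polygons $\operatorname{conv}(\mathcal Z_n^\lambda)$ (the Gau--Wu content), and that the whole Kippenhahn curve, not merely $\partial W(\bm A)$, splits as $\bigcup_k C_k$ (the Darboux content) --- so that the uniqueness clause may legitimately be invoked to match it with the curve $\Gamma$ built from the Bezoutian form. Subsidiary but necessary points, standard yet worth recording, are the smoothness and strict monotonicity of $\tau$ inherited from $B_n|_{\bbT}$ and the fact that $\partial W(\bm A)$ is corner-free for $\bm A\in\mathcal S_{n-1}$.
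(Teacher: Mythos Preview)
Your proposal is correct and follows essentially the same route as the paper: Theorem~\ref{thm:Mirman} for the class/foci equivalence, the Bezoutian/Blaschke identification for (i)--(ii) with the monotonicity of $\arg B_n$ on $\bbT$ feeding into \eqref{eq: monotonicity assumption}, and the cut-off CMV matrix plus Kippenhahn for (iii). One organizational difference worth noting: where you propose to show that the Kippenhahn curve $\Gamma_{\bm A}$ is itself a complete $n$-Poncelet curve and then invoke uniqueness-by-foci, the paper instead uses the Gau--Wu identity $W(\bm A)=\bigcap_\lambda\operatorname{conv}(\mathcal Z^\lambda)$ to pin down $C_1=\partial W(\bm A)$ directly and then appeals to B\'ezout to force $\Gamma=\Gamma_{\bm A}$; also, the paper gives a separate self-contained argument (via the minimal curve $\widetilde\Gamma^*$ containing $\Gamma(\R)^*$) that $\Gamma$ is determined by $\Gamma(\R)$, rather than folding this into the Bezoutian uniqueness as you do.
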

\begin{proof}
	We first show that  such a  curve $\Gamma$ of minimal class $n-1$ is completely determined by its set of real points $\Gamma(\R)$. 
	Let $\widetilde{\Gamma}$ be the real algebraic curve such that 
	$\widetilde{\Gamma}^*$  is the intersection of all real algebraic curves containing $\Gamma(\R)^*$.
	Note that $\widetilde{\Gamma}^*\subseteq \Gamma^*$ and $\widetilde{\Gamma}^*(\R)= \Gamma^*(\R)$.
	If $\widetilde{G}(u_1,u_2,u_3)$ and ${G}(u_1,u_2,u_3)$ are the homogenous polynomials of minimal degree defining  $\widetilde{\Gamma}^*$ and $\Gamma^*$,     respectively,
	then  $\widetilde{G}(u_1,u_2,u_3)$ divides  $G(u_1,u_2,u_3)$.
	By Lemma~\ref{lem:class}, the class of the curve $\widetilde{\Gamma}$ is $n-1$, which means that $\widetilde{G}(u_1,u_2,u_3)$ has degree $n-1$.
	Since $\widetilde{G}(u_1,u_2,u_3)$ divides  $G(u_1,u_2,u_3)$ and since $G(u_1,u_2,u_3)$ also has degree $n-1$, we  have 
	$G(u_1,u_2,u_3) = c \, \widetilde{G}(u_1,u_2,u_3)$, where $c$ is a nonzero constant.  Thus, $\Gamma^*=\widetilde{\Gamma}^*$
	and hence $\Gamma=\widetilde{\Gamma}$.  Since $\widetilde{\Gamma}$ is completely determined by $\Gamma(\R)$, so is $\Gamma$.
	
Let  $P(z,w)=0$ be Mirman's parametrization
of the complete $n$-Poncelet curve $\Gamma$, as explained in Section~\ref{sec:Bezoutian}. By Theorem~\ref{thm:Mirman},  $\Gamma$ is of class $n-1$ if and only if $N=n$, that is, $m=d=0$ (in other words, when all real foci are in $\bbD$).

Let $f_1, \dots, f_{n-1}$ be the real foci of a  complete $n$-Poncelet curve $\Gamma$ of class $n-1$. Again by Theorem~\ref{thm:Mirman}, its Mirman's parametrization is, up to a multiplicative constant,
 \begin{equation} \label{MP}
 P(z,w)= \frac{w\,  \Phi_{n-1}(w) \Phi_{n-1}^*(z)- z\,  \Phi_{n-1}(z) \Phi_{n-1}^*(w)}{w-z},
 \end{equation}
 with $\Phi_{n-1}$ defined in \eqref{Blasch}. In particular, it means that for any pairs of points $z, w \in \bbT$ satisfying $P(z,w)=0$ it holds that
 $$
 B_{n}(z) = B_{n}(w), \quad \text{with} \quad B_{n}(z)= z\, \frac{\Phi_{n-1}(z)}{\Phi_{n-1}^*(z)},
 $$
 i.e., $z, w$ are identified by $B_n$. 
 As we have seen in Section~\ref{sec:OPUC}, this is equivalent to being $z, w$ paraorthogonal extensions of $\Phi_{n-1}$.  Since $|B_{n}(z)|=1$ for $z\in \bbT$, there exists $\lambda\in \bbT$ such that
 $$
 z, w \in \mathcal{Z}^\lambda=\{z_1^\lambda , \dots , z_n^\lambda \},
 $$
 the set of solutions of $B_{n}(z)=\overline{\lambda}$. Hence,  by our discussion in Section~\ref{sec:cmv}, for each $\lambda \in\bbT$, points from $\mathcal{Z}^\lambda$ are also the eigenvalues of a rank one unitary dilation of the $(n-1)\times(n-1)$ cutoff CMV matrix $\bm A$ whose eigenvalues are $\{f_j\}_{j=1}^{n-1}$.  From \cite{MR3945586} (see also Section~\ref{sec:OPUC}) it follows that the unitary equivalence class of $\bm A$ is also uniquely determined by $f_1, \dots, f_{n-1}$ and as a representative of this class we can take $\bm A = \cmv{n-1}(\alpha_0, \dots, \alpha_{n-1})$, where the Verblunsky coefficients are determined by $ \Phi_{n-1}$
using the inverse Szeg\H{o} recursion: with notation \eqref{notation:zeros} and \eqref{notation:Verb},
 $$
 \Phi_{n-1}(z)=\Phi_{n-1}(z;f_1, \dots, f_{n-1})=\Phi_n^{(\alpha_0, \dots, \alpha_{n-1})}(z).
 $$

Let 
$$
\mathscr{P}(z):=\partial \left(\operatorname{conv} \{w\in \T: P(z,w) =0\}\right);
$$
by the previous observation, this family coincides with
$$
\left\{ \partial  \left( \text{conv} (\mathcal{Z}^\lambda )\right) : \lambda \in \bbT\right \} .
$$

Direct calculations (see also \cite[formula (6.10)]{MR2220032}) show that 
$$
\frac{d}{d\theta} \arg B\left( e^{i\theta}\right)=\frac{d}{d\theta} \arg \overline\lambda =1+\sum_{j=1}^{n-1} \frac{1-|f_j|^2}{|z-f_j|^2} > 0
$$
(compare with the necessary condition in \eqref{eq:Mirmanscondition}; see also an alternative expression in terms of orthonormal OPUC in \cite[formula (10.8)]{MR2383931}). The Inverse Function Theorem shows that  the Poncelet correspondence $\tau$ for $\{\mathscr{P}(z):\, z\in \bbT \}$ is strictly increasing, is smooth and satisfies \eqref{eq: monotonicity assumption}.
In consequence, the associated Poncelet curves $\{C_1,\ldots,C_{[n/2]}\}$ constructed in Section \ref{def:assocPoncelet},   make up the package of Poncelet curves generated by $\bm A$ in the sense described in \cite{MR3945586}.  Therefore, we may apply what is stated in \cite[page 130]{MR3945586}, namely that \eqref{MP}  is the Mirman parametrization of a complete $n$-Poncelet curve $\Gamma$.  It follows also that $\Gamma$ is an algebraic curve of class $n-1$ with real foci $\{f_j\}_{j=1}^{n-1}$ (see Proposition~\ref{prop:fociBl}). Our construction also shows the equivalence of (i) and (ii). 
 
It was proved in \cite{Gau:1998fk} that 
\begin{equation} \label{intersection}
	W(\bm A) =\bigcap_{\lambda \in \mathbb T} \text{conv} (\mathcal{Z}^\lambda ).
\end{equation}
This shows, in particular, that $\partial W(\bm A) $ coincides with the component $C_1$ in the package of Poncelet curves \eqref{Cunion}; our earlier considerations imply that $C_1$ is also the convex hull of $\Gamma(\R)$ (and thus, is a convex curve).  If $\Gamma'$ is the curve whose existence is guaranteed by Kippenhahn's Theorem (see Theorem~\ref{thm:Kippenhahn}), then since the convex hull of $\Gamma'(\R)$ is $\partial W(\bm A)$, it follows from Bezout's Theorem that $\Gamma=\Gamma'$.  The content of Section~\ref{sec:numran} implies that the equation for the dual curve of $\Gamma$ is 
\begin{equation} \label{eq:qA}
	G_{\bm A}(u_1, u_2, u_3)=0,
\end{equation}
where $G_{\bm A}$ was defined in  \eqref{defKippenhahn},  as claimed.  We have thus demonstrated that $\Gamma$ can be realized by the description in (iii).   The fact that each set  of eigenvalues of a $1$-parametric family of unitary dilations of    $\bm A$ in (iii) is identified by the Blaschke product $B_n$ is the direct consequence of \eqref{characteristicParaorthog}.

To show that $\Gamma$ is the unique curve with the desired properties, we suppose that $\widetilde  \Gamma$ is another complete $n$-Poncelet curve of class $n-1$ with real foci $\{f_j\}_{j=1}^{n-1}$.  Let $\widetilde{G}(u_1, u_2, u_3)=0$ be the equation of $\widetilde{\Gamma}^*$.  Then
\[
G_{\bm A}(1,i,f_j)=0=\widetilde{G}(1,i,f_j),\qquad\quad j=1,2\dots,n-1.
\]
Since $G_{\bm A}(1,i,z)$ and $\widetilde{G}(1,i,z)$ are both polynomials of degree (at most) $n-1$ and have $n-1$ zeros in common, it follows that they must be scalar multiples of one another and hence they define the same curve.  This means $\Gamma^*=\widetilde{\Gamma}^*$ and so $\Gamma=\widetilde{\Gamma}$.
\end{proof}

\begin{remark} \label{remark:CounterexampleMirman}
	\begin{enumerate}[a)]
		\item In Theorem~\ref{thm:matrices} we assume that $\Gamma$ is a complete $n$-Poncelet curve \textit{and} is of class $n-1$. We conjecture that this assumption is superfluous, and the results in Theorem~\ref{thm:matrices} can be established \textit{for any} complete Poncelet curve $\Gamma$. In particular, it would imply that all real foci of such a curve  are in $\bbD$. Notice that in general for a real algebraic curve $\Gamma$, the fact that $\Gamma(\R)\subset \bbD$ does not imply that its real foci  are in $\bbD$. A simple example \cite{counterexample} is the curve given by the equation  
		$$
	\left(  x_1^2 + x_2^2 -x_3^2/2\right)	\left(  (x_1-2)^2 + x_2^2 +x_3^2\right)=0
		$$
		with real foci at $(0:0:1)$ and $(2:0:1)$. 
		
\item It was established in  \cite{Mirman:2005dj} that the $n$-Poncelet property of a convex curve of class $n-1$ characterizes this curve as being a boundary of $W(\bm A)$ for some $\bm A\in \mathcal{S}_{n-1}$. The assumption on its class is essential as a counterexample in \cite[Example 1 on p. 131]{Mirman:2003ho} to a conjecture of Gau and Wu \cite[Conjecture 5.1]{Gau:1998fk} shows; see also the discussion in \cite{Gau:2003dw} on p. 184.

Curiously, the counterexample that appears in \cite{Mirman:2005dj}, constructed by violating the assumption that all $f_j$'s are in $\mathbb D$, is wrong. Namely, the authors consider the case of $\Phi_4(z; 0,0,0,a)=z^3(z-a)$, $a>1$, and the corresponding curve with Mirman's parametrization \eqref{eq:representation}. In  \cite{Mirman:2005dj}, they take $a=2$, but the resulting Poncelet curve is not convex! In fact, it is depicted in Figure~\ref{fig:convexity}, right. However, for a correct counterexample it is sufficient to use $a> \sqrt{3+\sqrt{6}}$, see our detailed discussion in Example~\ref{exampleMirman}. 

	\item Construction (ii) was extensively explored in the works \cite{MR3932079,Daepp:2010fz}. Its equivalence with (ii) was proved in \cite{MR3945586}.
	
	\item Property \eqref{intersection} characterizes the class $\mathcal{S}_{n-1}$: as it was shown in \cite[Theorem 4.4]{Gau:1998fk}, a  contraction $\bm A\in \C^{(n-1)\times (n-1)}$ (i.e. $\|\bm A\|\le 1$)  is in $\mathcal{S}_{n-1}$ if and only if  $W(\bm A)$ in $\mathbb{D}$ has the Poncelet property.
 	\end{enumerate}
\end{remark}

Although Theorem~\ref{thm:matrices} is stated for $n\ge 3$, its ``toy version'' for $n=2$ also holds:
\begin{prop}
Let $f\in \bbD$, and for $\lambda\in \bbT$ define
$$
z^\lambda_\pm = \frac{1}{2} \left(f - \overline{f  \lambda } \pm \sqrt{\left(f-  \overline{f  \lambda }\right)^2+4  \overline{ \lambda }} \right)  \in \bbT, 
$$
zeros of $\Phi_2^{(\overline f,  \lambda )}(z)=z(z-f)-\overline \lambda (1-\overline f z)$, or equivalently, eigenvalues of the $2\times 2$ cut-off CMV matrix
$$
\cmv{2}=\cmv{2}(\overline f,  \lambda)= \begin{pmatrix}
		f & \overline{\lambda} \sqrt{1-|f|^2}   \\ 
	\sqrt{1-|f|^2}  &- \overline{f \lambda }  
	\end{pmatrix}.
$$
Then for every  $\lambda\in \bbT$, the straight segment joining $z^\lambda_-$ and $z^\lambda_+$ passes through $f$. 
\end{prop}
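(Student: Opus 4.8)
The plan is to reduce the assertion to two elementary facts: that the two zeros $z^\lambda_\pm$ lie on $\bbT$, and that $f$ lies on the \emph{line} through them. Since a chord of $\bbT$ joining two of its points meets $\overline{\bbD}$ in exactly the corresponding closed segment, and $f\in\bbD$, these two facts together force $f\in[z^\lambda_-,z^\lambda_+]$. To set things up I would first expand the polynomial: writing $z_\pm$ for $z^\lambda_\pm$, one has $\Phi_2^{(\overline f,\lambda)}(z)=z(z-f)-\overline\lambda(1-\overline f z)=z^2-(f-\overline{f\lambda})\,z-\overline\lambda$, so Vieta's formulas give $z_-+z_+=f-\overline{f\lambda}$ and $z_-z_+=-\overline\lambda$. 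The explicit radical expression for $z^\lambda_\pm$ in the statement is then only needed to label the two roots and plays no further role.

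Next I would note that $z_\pm\in\bbT$ and $z_-\neq z_+$ is precisely the paraorthogonality statement recorded just after \eqref{def_POPUC}, applied with $n=2$, $\alpha_0=\overline f\in\bbD$, and last coefficient $\lambda\in\bbT$ (alternatively one could check directly that the quadratic $z^2-(f-\overline{f\lambda})z-\overline\lambda$ is self-inversive with unimodular constant term, but invoking the POPUC fact is cleaner). By \eqref{eq: line given by two points on T}, the chord through $z_-$ and $z_+$ is the line $z+z_-z_+\,\overline z-(z_-+z_+)=0$; substituting $z=f$ and using the two Vieta relations, the left-hand side becomes $f+(-\overline\lambda)\overline f-(f-\overline{f\lambda})=f-\overline{f\lambda}-f+\overline{f\lambda}=0$. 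Hence $f$ lies on this line, and since $f\in\bbD$ while $z_\pm\in\bbT$, the point $f$ must lie on the closed segment $[z_-,z_+]$, as claimed.

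I expect the only real obstacle to be bookkeeping rather than mathematics: one must confirm that the two descriptions of $\Phi_2^{(\overline f,\lambda)}$ agree (the Szeg\H{o} recursion \eqref{def_POPUC} with $\alpha_0=\overline f$ gives $\Phi_1(z)=z-f$ and $\Phi_1^*(z)=1-\overline f z$, which reproduces the formula in the statement) and keep the conjugations straight, using $\overline{f\lambda}=\overline f\,\overline\lambda$. The entire content of the proposition is the symmetric-function identity $f+z_-z_+\,\overline f=z_-+z_+$, which the Vieta relations encode directly; no estimate or limiting argument is involved.
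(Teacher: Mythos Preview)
Your argument is correct and complete. The paper does not actually supply a proof of this proposition; it merely cites \cite[Theorem~4.1]{MR3932079}. Your direct computation via Vieta and the chord equation~\eqref{eq: line given by two points on T} is exactly the sort of elementary verification one would want here, and the convexity observation that $f\in\bbD$ together with $z_\pm\in\bbT$ forces $f$ onto the closed chord (not merely the line) is the right finishing step.
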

See the proof for instance in \cite[Theorem 4.1]{MR3932079}.

 \section{Elliptic case} \label{sec:Mirman}
 
Recall the fundamental result proved by Darboux (see \cite{Darboux:1917},  also~\cite[Theorem 3]{Dragovic:2011jj}), mentioned as Theorem~\ref{thm:Darboux} in the Introduction:  if a component $C_j$ of a complete $n$-Poncelet curve \eqref{Cunion} is an ellipse and has $n$-Poncelet property (notice that here both $n$ are the same) then $C$ is a union of $[n/2]$ disjoint ellipses (also known as a package of  ellipses). Using the terminology introduced in Section \ref{sec:defs}, this is equivalent to assuming that  the curve $C_j$ is the envelope of polygons $\mathscr{P}_j(z)$ as defined in \eqref{defPoncPol}, with $\gcd(j,n)=1$. 

In this section we want to explore these ideas further and analyze the case of a component $C_j$ being an ellipse. This situation, especially when the convex component $C_1$ is assumed to be an ellipse, has attracted interest before, see e.g.~\cite{Brown:2004jl, Chien:2012bt,  Daepp:2002km, Daepp:2015bq, Daepp:2017fl, MR3932079, Gau:2006wr, Gau:2003by, Gorkin:2019ij, MR1322932, Mirman:2012ga}. 
As it could be expected, for an ellipse, Mirman's parametrization $P(z,w)=0$, described in Section~\ref{sec:evolvents}, has a more explicit form.

It was derived by   Mirman \cite{Mirman:2003ho, Mirman:2005hr} that  if $f_1, f_2\in \mathbb D$ are the foci of $C_j$ and $s$ is the length of its minor semiaxis, then  the straight line $\ell$ joining two points, $z,w\in \mathbb T$, is tangent to $C_j$ if and only if $z,w$ satisfy the equation $q(z,w)=0$, where 
  \begin{equation} \label{MirmanEllipse}
  q(z,w)=q(z,w;f_1,   f_{2}, s):=  \left(  w+b_1(z;f_1)\right)\left(  w+b_1(z;f_2)\right)-\frac{4s^2 z w}{\Phi_2^*(z; f_1, f_2)},
  \end{equation}
  see the notation in \eqref{notation:zeros}, \eqref{reversed}, \eqref{notation:Blaschke}.
  If $C_j$ is degenerated to a point ($f_1=f_2$, $s=0$), then we need to replace the expression above by
  \begin{equation} \label{MirmanEllipse2}
  q(z,w)=q(z,w;f_1,   f_{1}, 0)=q(z,w;f_1):=    w+b_1(z;f_1) .
  \end{equation}

In the non-degenerate case ($C_j$ is not a point) the Poncelet correspondence $\tau$ is correctly defined. Algebraically, it means that for $z\in \mathbb T$, the equation $q(z,w;f_1,   f_{2}, b)=0$ is quadratic in $w$ and has two  solutions,  the endpoints of the tangents to $C_j$ starting  at $z$ and ending on $\mathbb T$ (namely, $\tau(z)$ and $\tau^{-1}(z)$). This allows us to define an iterative process (that we call the \emph{circular Mirman's iteration}) as follows:
  	\begin{quote}
  		Start from $w_0\in \mathbb T$ and define $w_1\in \mathbb T$ as one of the two solutions of $q(w_0,w;f_1,   f_{2}, s)=0$.
  		
  		\noindent For $i=1, 2,\dots$,  choose as $w_{i+1}$ the solution of  $q(w_{i},w;f_1,   f_{2}, s)=0$ such that $w_{i+1}\neq w_{i-1}$.
  	\end{quote}
Clearly, $C_j$ has an $n$-Poncelet property if and only if $w_n=w_0$, and $n$ is the smallest natural number for which equality holds (in other words, if the orbit of the circular Mirman's iteration has length $n$).

The advantage of the algebraic interpretation of the iteration $\tau^k(z)$ is that we no longer need to assume $z\in \bbT$ (in which case the geometric interpretation is less obvious).

Reasoning as in the case of Mirman's parametrization (see Section~\ref{sec:MirmanPoncelet}), we see that replacing $w_0=0$ in $q(w_0,w;f_1,   f_{2}, s)=0$ yields the foci of $C_j$; in other words, starting in Mirman's iterations with $w_0=0$ necessarily yields either $w_1=f_1$ of $w_1=f_2$.  So, we can define the iterative process (let us call it the \emph{inner Mirman's iteration})  as follows:
\begin{quote}
	Start from $w_0=0$, define $w_1\in \{ f_1, f_2\} $ and for $i=1, 2,\dots$,  choose as $w_{i+1}$ the solution of  $q(w_{i},w;f_1,   f_{2}, s)=0$ such that $w_{i+1}\neq w_{i-1}$.
\end{quote}

Assume that $C_j$ has the $n$-Poncelet property and let $f_1,\dots, f_{n-1}$ be the set of points in $\bbD$ generated by this iterative process. According to Theorem~\ref{thm:matrices}, there exists a unique algebraic $n$-Poncelet curve $\Gamma$ of class $n-1$ with real foci precisely at $f_1, \dots, f_{n-1}$, and let $P(z,w)=0$ be its Mirman's parametrization. The following result was proved in \cite{Mirman:2003ho} and is a simple consequence of  Bezout's theorem:
\begin{thm}\label{thm:MirmanEllipses}
	Under the assumptions above, $q(z,w;f_1,   f_{2}, s)$ divides $P(z,w)$. Moreover, in the decomposition \eqref{Cunion}, each component $C_k$ is an ellipse, and they are all disjoint. The ellipse $C_{[n/2]}$ is degenerate (a point) if $n$ is even. If we denote by $f_{2k-1}$ and $f_{2k}$ the foci of $C_k$, and by $s_k$ its minor semiaxis, then  
	\begin{equation}\label{factorizat}
		P(z,w;f_1, \dots, f_{n-1})= \prod_{k=1}^{[n/2]} q(z,w;f_{2k-1},   f_{2k}, s_k);
	\end{equation}
	if $n$ is even, we take $f_{n-1}=f_{n}$, $s_{[n/2]}=0$.
\end{thm}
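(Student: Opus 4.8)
The plan is to separate the statement into a geometric claim (each $C_k$ is an ellipse and the $C_k$ are pairwise disjoint) and an algebraic claim (the factorization \eqref{factorizat}). The geometric claim I would deduce from Darboux's theorem, and the factorization I would then obtain by comparing the two sides of \eqref{factorizat}, for each fixed $z\in\bbT$, as polynomials in $w$ --- this is the ``Bezout'' ingredient the statement alludes to.

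First, since $C_j$ has the $n$-Poncelet property, Theorem~\ref{thm:Gauss} gives $\gcd(j,n)=1$, so the polygons $\mathscr{P}_j(z)$ of \eqref{defPoncPol} are genuine $n$-gons inscribed in $\bbT$ and circumscribed about the ellipse $C_j\subset\bbD$, and their vertex set $\{z,\tau^j(z),\dots,\tau^{(n-1)j}(z)\}$ coincides with $\{z,\tau(z),\dots,\tau^{n-1}(z)\}$. Any line joining two of these vertices, say $[\tau^a(z),\tau^b(z)]$, is tangent to one of $C_1,\dots,C_{[n/2]}$, hence --- because $\Gamma(\R)=\bigcup_k C_k$ --- tangent to $\Gamma$; thus \emph{every} diagonal of $\mathscr{P}_j(z)$ is tangent to $\Gamma$. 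The hypotheses of Theorem~\ref{thm:Darboux} are then met with $C=C_j$ and with our class-$(n-1)$ curve $\Gamma$, and one concludes that $\Gamma(\R)$ is a disjoint union of $[n/2]$ ellipses, the $[n/2]$-th degenerate (a point) when $n$ is even. (Theorem~\ref{thm:Darboux} a priori only furnishes \emph{some} class-$(n-1)$ curve with this decomposition; that it is $\Gamma$ is the uniqueness part of Theorem~\ref{thm:matrices}.) Writing $f_{2k-1},f_{2k}$ for the foci and $s_k$ for the minor semiaxis of $C_k$ --- with $f_{n-1}=f_n$, $s_{[n/2]}=0$ when $n$ is even --- Theorem~\ref{thm:matrices} also identifies $f_1,\dots,f_{n-1}$ with the full list of real foci of $\Gamma$, as in \eqref{factorizat}.

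Next, by Mirman's formula \eqref{MirmanEllipse} a chord $[z,w]$ of $\bbT$ is tangent to $C_k$ precisely when $q(z,w;f_{2k-1},f_{2k},s_k)=0$; let $\widehat q_k(z,w)$ be the polynomial obtained by clearing the denominator $\Phi_2^*(z;f_{2k-1},f_{2k})$, which does not vanish on $\bbT$. Fix $z_0\in\bbT$: the roots in $w$ of $\widehat q_k(z_0,\cdot)$ are the contact points $\tau^{k}(z_0),\tau^{-k}(z_0)\in\bbT$ of the two tangents from $z_0$ to $C_k$, and as $k$ runs over $1,\dots,[n/2]$ these pairs exhaust, without repetition, the $(n-1)$-element set $\{\tau(z_0),\dots,\tau^{n-1}(z_0)\}$ (for $n$ even, $k=[n/2]$ contributes the single point $\tau^{n/2}(z_0)$, matching the degenerate linear factor), so $\prod_{k=1}^{[n/2]}\widehat q_k(z_0,\cdot)$ has $w$-degree $n-1$ with simple roots $\tau(z_0),\dots,\tau^{n-1}(z_0)$. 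On the other hand, Theorem~\ref{thm:Mirman} with $N=n$ --- applicable because $\Gamma$ has minimal class --- says $P(z_0,\cdot)$ has $w$-degree $n-1$ with exactly the simple roots $\tau(z_0),\dots,\tau^{n-1}(z_0)$. Hence $P(z_0,\cdot)$ and $\prod_{k=1}^{[n/2]}\widehat q_k(z_0,\cdot)$ are polynomials in $w$ of equal degree with the same roots, so they are proportional, and a comparison of leading $w$-coefficients (both equal to $\prod_{j=1}^{n-1}(1-\overline{f_j}z_0)$ with the normalization of \eqref{eq:representation}) shows that the constant of proportionality does not depend on $z_0$. As $\bbT$ is infinite, this yields $P=\mathrm{const}\cdot\prod_{k=1}^{[n/2]}\widehat q_k$ as polynomials, which is \eqref{factorizat} up to the cleared denominators and an overall nonzero constant; disjointness of the $C_k$ was already recorded above.

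The hard part is the geometric step: verifying that Darboux's theorem genuinely applies to \emph{our} $\Gamma$. Two points need care there --- that every diagonal (not merely every side) of the $n$-gons circumscribed about $C_j$ is tangent to $\Gamma$, which is exactly where $\gcd(j,n)=1$ enters (it makes the vertex sets of $\mathscr{P}_j(z)$ and $\mathscr{P}(z)$ agree), and that the class-$(n-1)$ curve produced by Darboux is $\Gamma$ itself, which is the uniqueness assertion of Theorem~\ref{thm:matrices}. Once the decomposition of $\Gamma(\R)$ into a disjoint union of conics is in hand, the factorization is the routine ``Bezout'' computation above, the remaining effort being purely bookkeeping (the harmless $\Phi_2^*$ denominators of \eqref{MirmanEllipse} and the overall constant).
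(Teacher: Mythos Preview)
Your argument contains a circularity in the first paragraph that undermines the whole proof. You write that any diagonal $[\tau^a(z),\tau^b(z)]$ of the $n$-gon about $C_j$ is tangent to one of $C_1,\dots,C_{[n/2]}$, ``hence --- because $\Gamma(\R)=\bigcup_k C_k$ --- tangent to $\Gamma$.'' But here $\tau$ is the Poncelet correspondence of the \emph{ellipse} $C_j$, and the curves $C_1,\dots,C_{[n/2]}$ are the associated Poncelet curves built from $\tau$; the curve $\Gamma$, on the other hand, was \emph{defined} via Theorem~\ref{thm:matrices} from the foci $f_1,\dots,f_{n-1}$ produced by the inner Mirman iteration. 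You have not shown that these two constructions give the same curve --- that the package generated by $C_j$ coincides with $\Gamma(\R)$ --- and this is precisely the content of the assertion $q\mid P$ that you are trying to prove. Your appeal to ``the uniqueness part of Theorem~\ref{thm:matrices}'' does not close the gap: that uniqueness is for curves with a \emph{prescribed} set of real foci, and you never verify that the Darboux curve generated by $C_j$ has foci equal to $f_1,\dots,f_{n-1}$.

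Here is what is missing. Let $\widetilde\Gamma$ be the complete $n$-Poncelet curve of class $n-1$ generated by the polygons circumscribing $C_j$ (its existence comes from Darboux or from Section~\ref{sec:evolvents}); its Mirman parametrization $\widetilde P$ has the Bezoutian form \eqref{eq:representation}, so $\widetilde P(a,b)=0$ is equivalent to $\widetilde B_n(a)=\widetilde B_n(b)$ for a degree-$n$ Blaschke product $\widetilde B_n$ with $\widetilde B_n(0)=0$. Since $C_j$ is a component of $\widetilde\Gamma(\R)$, we have $q\mid\widetilde P$. Now run the inner Mirman iteration: $w_0=0$, $w_1\in\{f_1,f_2\}$, and $q(w_i,w_{i+1})=0$ implies $\widetilde P(w_i,w_{i+1})=0$, hence $\widetilde B_n(w_{i+1})=\widetilde B_n(w_i)$. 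By transitivity, $\widetilde B_n(w_i)=\widetilde B_n(0)=0$ for every $i$, i.e.\ each $w_i$ is a real focus of $\widetilde\Gamma$. Since the iteration produces $n-1$ values $f_1,\dots,f_{n-1}\in\bbD$ and $\widetilde\Gamma$ has exactly $n-1$ real foci, they must coincide, so $\widetilde\Gamma=\Gamma$ and therefore $q\mid P$. Once this is in place, your second paragraph (the root-matching for each fixed $z_0\in\bbT$) is correct and gives the factorization \eqref{factorizat}; the leading-coefficient comparison is right for the denominator-cleared $\widehat q_k$. This transitivity step is the actual ``Bezoutian'' content, and it is what the paper's one-line reference to B\'ezout is pointing at.
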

Comparing this statement with Theorem \ref{thm:Darboux} in the Introduction, we see that Mirman in fact reproved Darboux's result while being unaware of his work!

\begin{remark}
An assumption of Theorem \ref{thm:MirmanEllipses} is that the curve $\Gamma$ is of class exactly $n-1$, as the following construction shows. 
With $f_1,\dots, f_{n-1}$ being the points in $\bbD$ generated by inner Mirman's iterations for $C_j$, let 
$$
	b_{n }(z)=	b_{n }(z;0,  f_1,\dots, f_{n-1})=   \frac{z \Phi_{n-1}(z)}{\Phi_{n-1}^*(z)}, \quad \Phi_{n-1}(z)= \Phi_{n}(z; f_1,\dots, f_{n-1}).
$$
Pick arbitrary points $g_1,\dots, g_{m-1}\in \bbD$, define 
$$
	B_{m }(z)=	b_{m}(z; 0,  g_1,\dots, g_{m-1})=   \frac{z \Phi_{m-1}(z)}{\Phi_{m-1}^*(z)}, \quad \Phi_{m-1}(z)= \Phi_{m-1}(z; g_1,\dots, g_{m-1}),
$$
and consider the composition 
$$
D_{mn}(z)= (B_m \circ b_n)(z )=B_m(b_n(z)),
$$
which is again a Blaschke product with $D_{mn}(0)=0$. Then the envelope of polygons supported on solutions of $D_{mn}(z)=\overline{\lambda}$, $\lambda\in \bbT$, is a complete $mn$-Poncelet curve $\widetilde \Gamma$ of class $mn-1$. One of its components is the $n$-Pocelet ellipse $C_j$. However, as we have seen, the inner Mirman's iteration for $C_j$  generates only $n-1$ values, $f_1,\dots, f_{n-1}$, among them, the two foci of this ellipse. All these values are a 
subset of the real foci of $\widetilde \Gamma$, and by considerations above and by Theorem \ref{thm:MirmanEllipses}, these will be foci of a package of $[n/2]$ ellipses, all components of $\widetilde \Gamma$.  However, this does not mean that \emph{the whole} curve $\widetilde \Gamma$ is a package of Poncelet ellipses. 
\end{remark}

\begin{example} \label{exampleEllipses}
Let $n=3$, $f_1, f_2\in \bbD$, $g_1,\dots, g_{m-1}\in \bbD$ ($m\ge 2$), and as above, 
$$
D_{3m}(z)= (B_m \circ b_3)(z ), \quad 	b_{3 }(z)=	b_{3 }(z;0,  f_1,  f_{2}), \quad B_{m }(z)=	b_{m}(z; 0,  g_1,\dots, g_{m-1}).
$$
Then   the envelope of polygons supported on solutions of $D_{3n}(z)=\overline{\lambda}$, $\lambda\in \bbT$,   contains a  $3$-Poncelet ellipse with foci $f_1$ and $f_2$, whose minor semiaxis is
$$
s^2=\frac{1-|f_1|^2-|f_2|^2+|f_1f_2|^2}{4}
$$ 
(use \eqref{factorizat}  with $n=3$, $m=1$).  Notice that the inner Mirman iteration for that ellipse yields only $f_1$ and $f_2$. 
\begin{figure}[htb] 
	\begin{center}
		\begin{overpic}[width=0.35\linewidth]{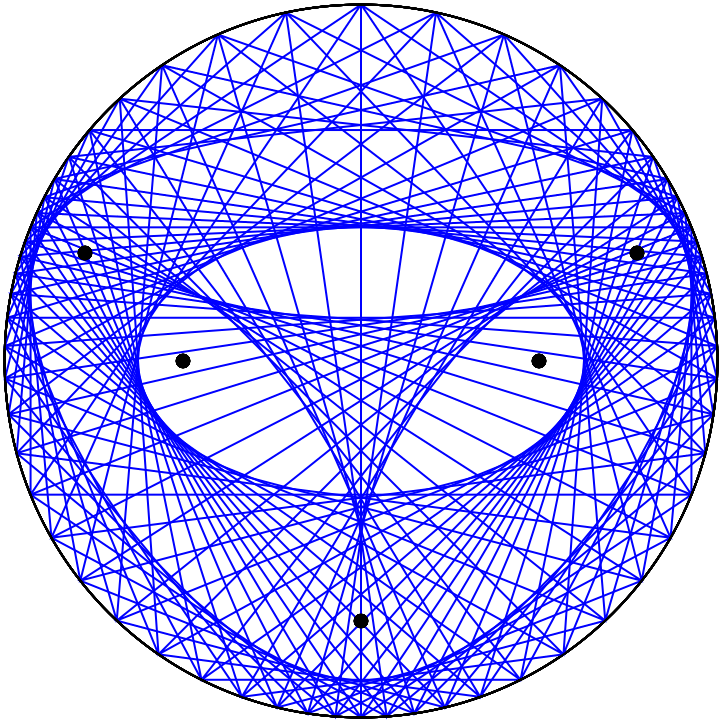}
		\end{overpic}
	\end{center}
	\caption{Illustration for Example~\ref{exampleEllipses}, with $f_1=-f_2=1/2$ and $g_1=i/2$ ($m=2$).}
	\label{fig:FigExampleBlaschke}
\end{figure} 
\end{example}

A simple consequence of our considerations is the following
\begin{thm}
	Let $\Gamma$ be a complete $n$-Poncelet curve of class $n-1$, $C_j$ is an elliptic component of the decomposition \eqref{Cunion}, and assume that the maximal set generated by inner Mirman's iteration corresponding to $C_j$  contains only $m-1$ distinct values $w_1, \dots, w_{m-1}$,  $m\le n$. Then
	\begin{itemize}
		\item $n$ is divisible by $m$;
		\item $\{ w_1, \dots, w_{m-1} \} \subset \{ f_1, \dots, f_{n-1} \} $, and $w_j$ are foci of a package of $[m/2]$ ellipses, all of them components of $\Gamma$.
	\end{itemize}
\end{thm}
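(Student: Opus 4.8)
The plan is to recognize the inner Mirman iteration for $C_j$ as the orbit of $0$ under the Poncelet correspondence of $C_j$, to read off the divisibility from the order of that correspondence, and then to identify the complete $m$-Poncelet curve generated by that orbit with a sub-collection of the components of $\Gamma$, whose elliptic nature is forced by Darboux's theorem (Theorem~\ref{thm:MirmanEllipses}).

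First I would fix notation. Since $\Gamma$ is a complete $n$-Poncelet curve of class $n-1$, Theorem~\ref{thm:matrices} gives that all real foci $f_1,\dots,f_{n-1}$ lie in $\bbD$ and that the Poncelet correspondence $\tau\colon\bbT\to\bbT$ of $\Gamma$, extended algebraically via the Bezoutian form $P(z,w)=0$ of Theorem~\ref{thm:Mirman}, is a birational map of order $n$ with $\{f_1,\dots,f_{n-1}\}=\{\tau(0),\dots,\tau^{n-1}(0)\}$ by Proposition~\ref{prop:fociBl}. By Theorem~\ref{thm:Gauss}, the elliptic component $C_j$ is a Poncelet curve of rank $d:=n/\gcd(j,n)$, a divisor of $n$, with Poncelet correspondence $\tau^{j}$, which satisfies $(\tau^{j})^{d}=\tau^{jd}=\mathrm{Id}$ because $jd$ is a multiple of $n$. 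Mirman's formula \eqref{MirmanEllipse} describes the same map: if $f_1',f_2'$ are the foci of $C_j$ and $s$ its minor semiaxis, the two roots in $w$ of $q(z,w;f_1',f_2',s)=0$ are $\tau^{j}(z)$ and $\tau^{-j}(z)$, and $q(0,w;f_1',f_2',s)=(w-f_1')(w-f_2')$.

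The second step shows the inner Mirman iteration computes exactly $w_i=(\tau^{j})^{i}(0)$. Indeed $w_1\in\{f_1',f_2'\}=\{\tau^{j}(0),\tau^{-j}(0)\}$, say $w_1=\tau^{j}(0)$ after possibly reversing orientation; inductively, the two roots of $q(w_i,\cdot)=0$ are $\tau^{j}(w_i)$ and $\tau^{-j}(w_i)=(\tau^{j})^{i-1}(0)=w_{i-1}$, so the selection rule $w_{i+1}\neq w_{i-1}$ forces $w_{i+1}=\tau^{j}(w_i)=(\tau^{j})^{i+1}(0)$ (the cases $d\le 2$, in which $C_j$ is a point or the iteration stalls at once, are trivial). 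Hence the maximal set generated is the $\tau^{j}$-orbit of $0$, of cardinality $m$; since $(\tau^{j})^{d}=\mathrm{Id}$, its length $m$ divides $d$, hence divides $n$, which is the first bullet. Moreover $w_i=\tau^{\,ij\bmod n}(0)$, and $ij\not\equiv 0\pmod n$ for $1\le i\le m-1$ (otherwise $d\mid i$, impossible as $i<m\le d$), so each $w_i$ is one of $\tau(0),\dots,\tau^{n-1}(0)$; thus $\{w_1,\dots,w_{m-1}\}\subseteq\{f_1,\dots,f_{n-1}\}$.

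For the package statement: because the iteration closes after producing the $m-1$ distinct values $w_1,\dots,w_{m-1}\in\bbD$, Theorem~\ref{thm:matrices} (with $m$ in place of $n$) furnishes a complete $m$-Poncelet curve $\widehat\Gamma$ of class $m-1$ with real foci $w_1,\dots,w_{m-1}$, and a Bézout argument as in Theorem~\ref{thm:MirmanEllipses} shows $q(z,w;f_1',f_2',s)$ divides the Bezoutian $\widehat P(z,w)$ of $\widehat\Gamma$, so $C_j$ is a component of $\widehat\Gamma$; in particular the Poncelet rank $d$ of $C_j$ is $\le m$, whence $m=d$. Theorem~\ref{thm:MirmanEllipses} then gives that $\widehat\Gamma$ is a package of $[m/2]$ mutually disjoint ellipses with foci exactly $w_1,\dots,w_{m-1}$, one being $C_j$. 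Finally, with $\widehat\tau$ the (order $m$) Poncelet correspondence of $\widehat\Gamma$, the ellipse $C_j$ is $\widehat C_{l_0}$ for some $l_0$ coprime to $m$, so $\widehat\tau^{\,l_0}=\tau^{\pm j}$ and hence $\widehat\tau=\tau^{j'}$ with $j'\equiv\pm j\,l_0^{-1}\pmod n$; therefore each $\widehat C_l$, $1\le l\le[m/2]$, is the envelope of the chords $[z,\widehat\tau^{\,l}(z)]=[z,\tau^{j'l}(z)]$ over $z\in\bbT$, i.e.\ the associated Poncelet curve $C_{j'l\bmod n}$ of $\Gamma$. Thus the $[m/2]$ ellipses of $\widehat\Gamma$ are components of $\Gamma$ with foci $w_1,\dots,w_{m-1}$, as claimed. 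The main obstacle I anticipate is precisely the identity $m=d$ and, more generally, controlling the algebraically extended map $\tau^{j}$ at the interior point $0$ — ensuring the iteration neither stalls prematurely nor closes in fewer steps than the Poncelet rank — together with the degenerate configurations (repeated foci of $\Gamma$, a point-component $C_{n/2}$, or $0\in C_j$), in each of which the conclusion is either vacuous or immediate; the divisibility $m\mid d\mid n$ and the inclusion $\{w_i\}\subseteq\{f_i\}$ are the robust parts, the delicate input being that a component of a complete $m$-Poncelet curve cannot have Poncelet rank exceeding $m$.
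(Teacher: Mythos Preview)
The paper does not give a proof of this theorem; it is stated as ``a simple consequence of our considerations'' immediately after Theorem~\ref{thm:MirmanEllipses} and the Remark on Blaschke product composition. Your outline is precisely the intended argument: use Theorem~\ref{thm:Gauss} to identify the Poncelet rank $d=n/\gcd(j,n)$ of $C_j$, apply Theorem~\ref{thm:MirmanEllipses} (with $d$ in place of $n$) to obtain a package of $[d/2]$ ellipses, and then recognize these ellipses as the components $C_{jk\bmod n}$ of $\Gamma$ via the identification $\widehat\tau=\tau^{j'}$ of Poncelet correspondences. Your final paragraph carries this out cleanly, and the inclusion $\{w_i\}\subseteq\{f_i\}$ then follows from $\widehat P\mid P$.

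You correctly isolate the only real difficulty: the equality $m=d$, which the paper never addresses explicitly. There is, however, a circularity in your proposed argument for it. You invoke ``a B\'ezout argument as in Theorem~\ref{thm:MirmanEllipses}'' to show that $q$ divides $\widehat P$ and hence that $C_j$ is a component of $\widehat\Gamma$; but the B\'ezout argument in that theorem rests on knowing, for each $z\in\bbT$, that the two roots of $q(z,\cdot)$ lie among the roots of $\widehat P(z,\cdot)$ --- which is exactly the statement that $C_j$ is a component of $\widehat\Gamma$. Likewise, the step ``$(\tau^j)^d=\mathrm{Id}$, hence the orbit length $m$ divides $d$'' uses an extension of $\tau$ to $0$ that the paper never constructs; Proposition~\ref{prop:fociBl} only says the foci solve $P(0,w)=0$, not that they form a $\tau$-orbit. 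A non-circular way to get $m=d$ is the classical Poncelet closure mechanism: the symmetric curve $\{q(z,w)=0\}\subset\P^1\times\P^1$ is (generically) elliptic, and the Mirman iteration is translation by a point of exact order $d$ on it, so every orbit --- in particular that of $0$ --- has length exactly $d$. With $m=d$ in hand, the rest of your argument goes through as written.
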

For instance, if $n$ is prime, then the existence of any elliptic component in $\Gamma(\R)$  forces all $C_k$'s to be ellipses (it is a package of ellipses).

We can reformulate Mirman's iteration by observing the independent term in \eqref{MirmanEllipse} and applying Vieta's formulas: given a value $z$, the two solutions $w'$ and $w''$ of $q(z,w;f_1,   f_{2}, s)=0$ satisfy
  $$
  w' w''= b_1(z;f_1)b_1(z;f_2)=b_2(z;f_1,f_2).
  $$
  In particular, in the circular Mirman iteration,
  \begin{equation}\label{iterationC}
  w_{i+1}w_{i-1}=b_2(w_i;f_1,f_2), \quad i=1, 2, \dots 
  \end{equation}
  This is a three term recurrence relation that has an advantage of not requiring knowledge of value $s$ of the semiaxis. It needs two initial values, for which we could use $w_0$ and $w_1$. However, notice that it is necessary to know $s$ in order to calculate $w_1$. Obviously, since all $w_i$'s are on $\mathbb T$, formulas \eqref{iterationC} generate $w_{i+1}$ from $(w_{i-1}, w_{i})$ for all $i\in \N$.

  As before, relaxing the assumptions we still can use the iterations \eqref{iterationC}; however, in the present situation there are two additional moments to address:
  \begin{itemize}
  	\item The iteration breaks down when $w_{j-1}=0$;
  	\item for that reason, we cannot use  $w_0=0$ and $w_1=f_1$ (or $w_1=f_2$)  to initialize the process; we need to calculate a third value, a solution of $q(w_{1},w;f_1,   f_{2}, s)=0$, $w_2\neq 0$. Notice that
  	$$
  	q(f_i ,w;f_1,   f_{2}, s)=w\left(   w+b_1(f_i;f_j)- \frac{4s^2 f_i }{\Phi_2^*(f_i; f_1, f_2)} \right) , \quad i\neq j, \quad i, j\in \{ 1,2\},
  	$$
  	so  we can initialize the inner Mirman iterations explicitly by
  	\begin{equation} \label{initIter}
  	w_1 =f_i\in \{ f_1, f_2\}, \quad w_2=  -b_1(f_i;f_j) + \frac{4s^2 f_i }{\Phi_2^*(f_i; f_1, f_2)} , \quad i\neq j, \quad i, j\in \{ 1,2\}.
  	\end{equation}
  	As it was observed, the value of $s$ is necessary to calculate $w_2$. 
  \end{itemize}  

We further explore the  case of ellipses with Poncelet properties in a forthcoming paper \cite{FociEllipses}.

\section*{Acknowledgments}

The second author was partially supported by Simons Foundation Collaboration Grants for Mathematicians (grant 710499) and by the European Regional Development Fund along with Spanish Government (grant MTM2017-89941-P) and Junta de Andaluc\'{\i}a (grant UAL18-FQM-B025-A, as well as research group FQM-229 and Instituto Interuniversitario Carlos I de F\'{\i}sica Te\'orica y Computacional), and by the University of Almer\'{\i}a (Campus de Excelencia Internacional del Mar  CEIMAR).

The fourth author graciously acknowledges support from Simons Foundation Collaboration Grant for Mathematicians 707882.

The authors are indebted to V.~Dragović,  J.~Langer and D.~Singer for helpful discussions.



%

\end{document}